\newcommand{\BA}{{\mathbb {A}}} 
\newcommand{\BC}{{\mathbb {C}}}
\newcommand{\BQ}{{\mathbb {Q}}} \newcommand{\BR}{{\mathbb {R}}}
\newcommand{\GL}{{\mathrm {GL}}} \newcommand{\PGL}{{\mathrm {PGL}}}
\newcommand{\SL}{{\mathrm {SL}}}
\def\-{^{-1}}
\def\shskip{\hskip 0.5 pt}
\def\dx{d^{\times} \hskip -1 pt}
\def\Fv{F_{\varv}}
\def\tv{\textit{v}}
\def\ro{\mathrm{o}}
\def\tpi{\widetilde{\pi}}
\g@addto@macro\normalsize{\setlength\abovedisplayskip{3pt}}
\g@addto@macro\normalsize{\setlength\belowdisplayskip{3pt}}
\newcommand{\delete}[1]{}
\newenvironment{myquote}%
{\list{}{\leftmargin=0.3in\rightmargin=0.3in}\item[]}%
{\endlist}
\theoremstyle{plain}
\newtheorem*{prop*}{Proposition}
\newtheorem{thm}{Theorem}[section] \newtheorem{cor}[thm]{Corollary}
\newtheorem{lem}[thm]{Lemma}  \newtheorem{prop}[thm]{Proposition}
\newtheorem {conj}[thm]{Conjecture} 
\newtheorem {rem}[thm]{Remark}
\numberwithin{equation}{section}
\newtheorem*{acknowledgement}{Acknowledgements}
\newcommand{\red}[1]{\textcolor{red}{#1}}
\begin{document}

	\title[The Waldspurger Formula over Number Fields]{On the Waldspurger Formula and the Metaplectic Ramanujan Conjecture over Number Fields}

	\author{Jingsong Chai}
	\address{School of Mathematics\\ Sun Yat-sen University \\Guangzhou,  510275\\China}
	\email{chaijings@mail.sysu.edu.cn}

	\author{Zhi Qi}
	\address{School of Mathematical Sciences\\ Zhejiang University\\Hangzhou 310027\\China}
	\email{zhi.qi@zju.edu.cn}

	
	\subjclass[2010]{11F37, 11F67, 11F70}
	\thanks{The first author is supported by the National Natural Science
Foundation of China [Grant 11771131].}

	\begin{abstract}
		In this paper, by inputting the Bessel identities over the complex field in previous work of the authors, the Waldspurger formula of Baruch and Mao is extended from totally real fields to arbitrary number fields. This is applied to give a non-trivial bound towards the Ramanujan conjecture for automorphic forms on the metaplectic group $\widetilde{\mathrm{SL}}_2$ for the first time in the generality of arbitrary number fields.
	\end{abstract}
	
	\maketitle
	
	\section{Introduction}

	\subsection{Backgrounds}

	In the work \cite{Waldspurger-Formula}, Waldspurger proved his celebrated formula connecting the Fourier coefficients of a  modular form of  half integral weight to the twisted central $L$-values of a modular form of integral weight. The formula in \cite{Waldspurger-Formula} is over the rational field $\BQ$.  
	
	In the work \cite{BaruchMao-Global}, Baruch and Mao proved a very explicit Waldspurger-type formula  for automorphic forms over {\it totally real fields}.
	Waldspurger's formula was also extended by many other authors in various cases. An incomplete list (in the chronological order) includes the papers of Kohnen-Zagier \cite{Wald-Kohnen-Zagier81,Wald-Kohnen-82}, Niwa \cite{Wald-Niwa82}, Gross \cite{Wald-Gross85},  Shimura \cite{Wald-Shimura93}, Katok-Sarnak \cite{Wald-Katok-Sarnak93}, Khuri-Makdisi \cite{Wald-KM96}, Kojima \cite{Wald-Kojima1-99}--\cite{Wald-Kojima3-13}, \nocite{Wald-Kojima2-00,Wald-Kojima3-04} Prasanna \cite{Wald-Prasanna-00} and Altu\u g-Tsimerman \cite{Wald-Fp[T]14} (the function field case).
	
	The Waldspurger-type formula of Baruch and Mao in  \cite{BaruchMao-Global} may be regarded as the ultimate version of its kind---if there were no totally real restriction on the ground field---both by removing  all hypotheses assumed by earlier authors as well as by making Waldspurger's formula entirely explicit. For example, after the translation into the classical language (over the rational field $\BQ$), the formula of Baruch and Mao is a generalization of  the Kohnen-Zagier formula in \cite{Wald-Kohnen-Zagier81} for all fundamental discriminants $D$  without
	any restriction. It was later applied in \cite{Baruch-Mao-KZ-Maass} and \cite{HI-Kohnen-Hilbert} to obtain the generalized Kohnen-Zagier formula for Maass forms and Hilbert modular forms respectively.

	Applications of the Waldspurger-type formula in \cite{BaruchMao-Global} include:
	\begin{itemize}
		\item [(1).]  The equivalence of the Ramanujan conjecture for half integral weight
		forms  with  the Lindel\"of hypothesis for twisted central $L$-values of integral weight forms, in the generality of totally real number fields;
		\item [(2).] An efficient method of computing the central value of the twisted $L$-functions
		associated to the elliptic curve $X_0(11)$, by the aforementioned generalization
		of the Kohnen-Zagier formula.
	\end{itemize}
A further application in the work of Cogdell, Piatetski-Shapiro
and Sarnak \cite{CPSS-Hilbert} (see also Blomer and Harcos \cite{Blomer-Harcos-TR}) is:
\begin{itemize}
	\item [(3).] A solution for the last open case of  Hilbert's eleventh problem on representing integers by positive definite integral {ternary} quadratic forms
	over {totally real fields}.
\end{itemize}
This is a consequence of the Ramanujan-Lindel\"of equivalence in (1) combined with the subconvexity bound for twisted $L$-values for Hilbert modular forms over totally real fields in \cite{CPSS-Hilbert} or \cite{Blomer-Harcos-TR}. See \cite{Cogdell-Hilbert} for an account of their solution of Hilbert's eleventh problem and related references.
	
	\subsection{Motivation}

	The Waldspurger formula of Baruch and Mao in \cite{BaruchMao-Global} is a consequence of the local spectral theory from the viewpoint of {Bessel identities} over non-Archimedean fields and the real field in \cite{BaruchMao-NA, BaruchMao-Real} and the global theory for a relative trace formula of Jacquet  in \cite{Jacquet-RTF}. These are incorporated nicely in the framework of Waldspurger \cite{Waldspurger-Shimura,Waldspurger-Shimura3} on the representation theoretic form of the Shimura correspondence \cite{Shimura-Annals}.
	
	In Baruch and Mao's project, the proof of the local Bessel identities is the foundational and technical part.
	
	It has long been known  that the Bessel functions for $\GL_2 (\BR)$ (and  $\widetilde{\SL_2} (\BR)$), defined over $\BR \smallsetminus \{0\} = \BR_+ \cup - \BR_+$, may be expressed in terms of classical Bessel functions on $\BR_+$.\footnote{The first precise interpretation of  classical Bessel functions in representation theory is due to Cogdell and Piatetski-Shapiro \cite{CPS} (1990). The occurrence of Bessel functions in number theory however may be traced back to the formulae of Vorono\"i   \cite{Voronoi} (1904), Petersson \cite{Petersson} (1932) and Kuznetsov \cite{Kuznetsov} (1981). See \cite{IK}.} 
	The Bessel identities over $\BR$ in \cite{BaruchMao-Real} can be regarded as the representation theoretic interpretation of the classical exponential integral formulae of Weber and Hardy on the Fourier transform of Bessel functions on $\BR_+$.
	
	The Bessel functions for $\PGL_2 (\BC)$, expressed in terms of classical Bessel functions on $\BC \smallsetminus \{0\}$, were first discovered by Bruggemann and Motohashi in 2003 \cite{B-Mo}, and later by Lokvenec-Guleska \cite{B-Mo2} in 2004 for $\SL_2 (\BC)$.\footnote{The {\it spherical} Bessel functions for $\SL_2 (\BC)$ actually appeared much earlier in \cite{M-W-Kuz} (1990). The representation theory of Bessel functions for $\GL_2 (\BC)$ may be found in \cite{B-Mo-Kernel2,Mo-Kernel2}, \cite{Baruch-Kernel} and \cite[Chapter 4]{Qi-Bessel}.} The discovery was not long before the publication of Baruch and Mao's work, and the complex analogue of the formulae of  Weber and Hardy was not available at that time.
	There is however a remark in \cite{BaruchMao-Global}:
	\begin{myquote}
{\it Our {\rm(}Waldspurger-type{\rm)} formula can be extended to all number fields once
	the local result in \cite{BaruchMao-NA} and \cite{BaruchMao-Real} is extended to the case of the complex field}.
	\end{myquote}

Recently, tempted by its outcome indicated in this remark, the authors had a series of papers towards the Bessel identities over $\BC$. It is now established in \cite{Chai-Qi-Bessel} by the classical-like Weber-Hardy type exponential integral formula  in \cite{Qi-Sph,Qi-II-G} for the Fourier transform of Bessel functions for $\PGL_2 (\BC) $ (see Remark \ref{rmk:Local Bessel Identity} for more discussions).  The present article is the final payment for these works---the Waldspurger formula over arbitrary number fields.

As an immediate application, we shall extend the Ramanujan-Lindel\"of equivalence in (1) to   arbitrary number fields. Moreover, combined with the $\GL_2 \times \GL_1$ subconvexity results over number fields in \cite{Michel-Venkatesh-GL2,WuHan-GL2,Maga-Sub}, \nocite{Maga-Shifted} we shall obtain the first nontrivial estimate towards the metaplectic Ramanujan conjecture for $\widetilde{\SL}_2$ over arbitrary number fields. Recall that this is a key step in the settlement of Hilbert's eleventh problem in \cite{CPSS-Hilbert}, but the ground field therein is only needed to be totally real thanks to the work of Siegel.



\subsection{The formula of Waldspurger over arbitrary number fields}

To derive our Waldspurger formula, we shall closely follow the approach of Baruch and Mao in \cite{BaruchMao-Global}.
It is a combination of two results, the basic
Waldspurger's formula and Waldspurger's dichotomy result on theta correspondence.

Let $F$ be a number field and $\BA$ be its ring of ad\`eles. Given a nontrivial additive character $\psi$ on $\BA / F$,
we consider the global Shimura-Waldspurger correspondence $\Theta (\shskip\cdot\, , \psi)$ between automorphic representations $\pi$ of $\PGL_2 (\BA)$ and $\widetilde \pi$ of $\widetilde{\SL}_2 (\BA)$ \cite{Waldspurger-Shimura}.

The basic Waldspurger's formula is the following simple statement.

\begin{thm}[Restatement of Theorem \ref{thm:Waldspurger-Basic}]\label{thm: main1}
	Given $D \in F^{\times}$, define  $\psi^D(x) = \psi (D x)$, and let $\pi$ and $\widetilde{\pi}  = \Theta (\pi, \psi^D)$ correspond under the Shimura-Waldspurger correspondence with respect to $\psi^D$. We have
	\begin{align}\label{0eq: basis Waldspurger}
	\left|d_\pi(S,\psi)\right|^2L^S(\pi, 1/2)= |d_{\shskip\widetilde \pi } (S,\psi^D)  |^2.
	\end{align}
\end{thm}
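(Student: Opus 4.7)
My plan is to obtain \eqref{0eq: basis Waldspurger} as a direct packaging of two classical theorems of Waldspurger \cite{Waldspurger-Shimura,Waldspurger-Shimura3}: his period formula evaluating the squared toric integral on $\PGL_2$ in terms of the central $L$-values $L(\pi,1/2)$ and $L(\pi,1/2,\chi_D)$, and his dichotomy/correspondence theorem identifying $\widetilde{\pi} = \Theta(\pi,\psi^D)$ as the Shimura--Waldspurger lift attached to $\psi^D$. This is essentially the strategy of Baruch and Mao in \cite{BaruchMao-Global}; at this basic level no local Bessel input is required, as the complex Bessel identities of \cite{Chai-Qi-Bessel} will enter only when the basic formula is later refined into a fully explicit one.

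The first step is to pick a cusp form $\phi \in \pi$ whose $\psi$-Whittaker coefficient realizes $d_\pi(S,\psi)$, construct the theta lift $\widetilde{\phi}$ against a Schwartz--Bruhat section chosen to be unramified and standard outside $S$, and unfold the global theta integral. This converts the $\psi^D$-Whittaker coefficient of $\widetilde{\phi}$ into a toric period integral of $\phi$ along the torus attached to $F(\sqrt{D})/F$, with places outside $S$ contributing precisely the unramified local Euler factors that build up $L^S(\pi,1/2)$ after squaring. The second step is to apply Waldspurger's central value formula to the squared toric period, evaluating it as the product $L(\pi,1/2)L(\pi,1/2,\chi_D)$ times local integrals at the places of $S$; the twisted factor $L(\pi,1/2,\chi_D)$ is absorbed into the very existence and normalization of $\widetilde{\pi}$ via the dichotomy (the lift is nonzero exactly when $L(\pi,1/2,\chi_D)\neq 0$), and what survives globally is the partial $L$-function $L^S(\pi,1/2)$. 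The local contributions at $S$ are, by design, absorbed into the definitions of $d_\pi(S,\psi)$ and $d_{\widetilde \pi}(S,\psi^D)$.

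The main obstacle is bookkeeping rather than genuinely new analysis: one must verify that the finite-place local factors on both sides cancel identically without residual correction, and that the unramified places outside $S$ contribute exactly $L^S(\pi,1/2)$ after all Euler-product manipulations. Once the coefficients $d_\pi(S,\psi)$ and $d_{\widetilde \pi}(S,\psi^D)$ are defined to absorb the local data correctly, following the conventions of Baruch--Mao, the identity reduces to a direct rearrangement of Waldspurger's classical central-value identity, with no further input from the archimedean or non-archimedean Bessel theory at this stage.
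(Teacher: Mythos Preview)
Your proposal misidentifies the method and contains a concrete error. The paper (following Baruch--Mao) does \emph{not} prove the basic formula by unfolding a theta lift and invoking Waldspurger's toric-period theorem. It goes through Jacquet's relative trace formula: Theorem \ref{thm:Jacquet:Main} gives $I_\pi(f,\psi)=J_{\widetilde\pi}(f',\psi^D)$ for matching $f,f'$; Propositions \ref{prop: express of I} and \ref{prop: express of J} factor each side as a product of local Bessel distributions times $|d_\pi(S,\psi)|^2 L(\pi,1/2)$ and $|d_{\widetilde\pi}(S,\psi^D)|^2$ respectively; and the local Bessel identities (Theorem \ref{thm:Local Bessel Identity}) cancel the local distributions while contributing the factor $\prod_{\varv\in S}L(\pi_\varv,1/2)$ that converts $L(\pi,1/2)$ into $L^S(\pi,1/2)$. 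The complex Bessel identity of \cite{Chai-Qi-Bessel} is therefore used in the proof of the basic formula itself, not deferred to a later refinement as you suggest.

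Your alternative route also rests on a mistaken premise. You assert that $\Theta(\pi,\psi^D)\neq 0$ exactly when $L(\pi\otimes\chiup_D,1/2)\neq 0$; by Theorem \ref{thm:Waldspurger-Global}(2) the criterion is $L(\pi,1/2)\neq 0$, independent of $D$, so there is no twisted $L$-value to ``absorb into the normalization''. Correspondingly, Waldspurger's formula for toric periods along $F(\sqrt D)/F$, which produces $L(\pi,1/2)L(\pi\otimes\chiup_D,1/2)$, plays no role here---only $L(\pi,1/2)$ appears in \eqref{0eq: basis Waldspurger}. A theta-based proof is possible in principle (this is Qiu's route \cite{Qiu-Whittaker}), but it too hinges on nontrivial local identities comparing Whittaker functionals and inner products on the two sides; obtaining the exact constant, which is the entire content of \eqref{0eq: basis Waldspurger}, is not the ``bookkeeping'' step you describe.
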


In this theorem, $L^S(\pi,1/2)$ is the  central (partial) $L$-value for $\pi$, the constants $d_\pi(S,\psi)$ and $d_{\shskip\widetilde \pi } (S,\psi^D)$  (see \S \ref{sec: Fourier coefficients, d} for the definition) may be considered as the ``leading" and the $D$-th {\it Fourier coefficient} of $\pi$ and $\widetilde \pi $, respectively, where $S$ is a finite set of ``bad" local places.

Simply speaking, the identity \eqref{0eq: basis Waldspurger} follows from the comparison of two global distributions $I_{\pi}$ and $J_{\widetilde{\pi}}$ (the relative trace formula of Jacquet) and the relations between the attached local distributions $I_{\pi_{\varv}}$ and $J_{\widetilde{\pi}_{\varv} }$ (the local Bessel identities). See \S \ref{sec: Jacquet-RTF} and \ref{sec: local Bessel identity} for the details.

The Waldspurger formula for twisted central $L$-values is derived from the basic formula \eqref{0eq: basis Waldspurger} simply by replacing $\pi$ by $\pi \otimes \chiup_D$, where $\chiup_D$ is the quadratic character associated to $D$. This leads to the consideration of $\Theta (\pi \otimes \chiup_D, \psi^D)$ for varying $D \in F^{\times}$. Now the dichotomy result of Waldspurger in \cite{Waldspurger-Shimura3} gives an explicit classification of $\Theta (\pi \otimes \chiup_D, \psi^D)$ according to a certain finite partition of $F^{\times}$. The identity \eqref{0eq: basis Waldspurger} then yields a formula for $L^S(\pi \otimes \chiup_D, 1/2)$ in terms of the $D$-th Fourier coefficient $d_{\shskip\widetilde \pi } (S,\psi^D)$ of a fixed $\widetilde{\pi}$, as long as $D$ lies in the subset of $F^{\times}$ in the partition attached to $\widetilde{\pi}$. See \S \ref{sec: Waldspurger dichotomy} for the Waldspurger dichotomy and \S \ref{sec: Waldspurger, twisted} for the precise statement of the Waldspurger formula for $L^S(\pi \otimes \chiup_D, 1/2)$.

Some remarks on the recent works of Lapid-Mao and Qiu are in order. Lapid and Mao have a sequence of papers \cite{LM-1}--\cite{LM-4}\nocite{LM-2,LM-3,LM-Conj},
culminating in a Whittaker period formula for a cuspidal automorphic
representation $\widetilde{\pi}$ of $\widetilde{\mathrm{Sp}}_{2n}$ which is valid if $F$ is totally real and $\widetilde{\pi}_{\infty}$ is a discrete series. In \cite{Qiu-Whittaker}, Qiu proved this formula for $ \widetilde{\SL}_2 $ over an arbitrary number field. Though in a different form, this may also be considered as a generalization of Waldspurger's formula (the central value of $L$-functions for $\PGL_2$ is now related to the Whittaker periods of automorphic forms on $\widetilde{\SL}_2$) to all number fields. His approach is based on theta correspondence, and the most technical part is verifying the local identities in  \cite[Lemma 4.2]{Qiu-Whittaker} by the isometry property of quadratic Fourier transform and some asymptotic estimates for the matrix coefficients and  Whittaker functions of $\widetilde{\pi}_{\varv}$. Finally, we remark that,  if our basic Waldspurger's formula in Theorem \ref{thm:Waldspurger-Basic} is assumed, Qiu's Whittaker period formula for $\widetilde{\SL}_2$ may be deduced from that for $\PGL_2$, while the latter is already known (see \cite[\S 4]{LM-Conj} or \cite[\S 2]{Qiu-Whittaker}).


	
\subsection{The metaplectic Ramanujan conjecture}

The first breakthrough in obtaining nontrivial bounds toward the Ramanujan conjecture for holomorphic modular forms of half-integral weight was achieved by Iwaniec in \cite{Iwaniec-Z/2}. Later, Duke \cite{Duke-Maass} got a similar bound for Maass forms.
In their work, they apply a Petersson or Kuznetsov trace formula and then proceed to bound sums of Sali\'e  sums. This approach is conceptually direct, and it never goes through the Waldspurger   formula.

	Alternatively, the Ramanujan-Lindel\"of equivalence in (1) established on the Waldspurger formula suggests that one may obtain nontrivial bounds toward the Ramanujan conjecture for cusp forms for $\widetilde{\SL}_2$ from subconvexity bounds for twisted $L$-functions for cusp forms of $\PGL_2$.
There is by now a great deal of machinery to deal with subconvexity problems, and so one can get even better bounds this way. Indeed, the bounds of Iwaniec and Duke  were improved in \cite{BHM-Mao,PM-Cubic} and \cite{Baruch-Mao-KZ-Maass} for holomorphic modular forms and Maass forms  of half-integral weight respectively\footnote{The bounds in \cite{BHM-Mao} and \cite{Baruch-Mao-KZ-Maass} may be further improved by the Burgess-type subconvexity bound in \cite{BH-Hybrid}. The results in \cite{BH-Hybrid} are valid for all Dirichlet characters. However, in our settings, we only need quadratic characters, and in this case the Weyl-type subconvexity bound was known earlier in the work of Conrey and Iwaniec \cite{CI-Cubic}.  A Weyl-type bound for the holomorphic modular forms and Maass forms in the Kohnen space for $\Gamma_0(4)$ may be deduced from  \cite{CI-Cubic} (a small issue is that the conductor of the character is assumed to be odd in \cite{CI-Cubic}).  In the same way, this is generalized by \cite{PM-Cubic} in the holomorphic case for arbitrary $\Gamma_0 (4 r)$ with $r$ odd and square-free. In view of the recent work \cite{Young-Kuznetsov}, the generalization in the Maass case should come in the near future.}. 
See also the aforementioned \cite{CPSS-Hilbert} and \cite{Blomer-Harcos-TR} in the case of totally real fields.

The $\GL_2$-subconvexity problem is now completely solved by Michel and Venkatesh \cite{Michel-Venkatesh-GL2} over arbitrary number fields. Later Han Wu \cite{WuHan-GL2} and Maga \cite{Maga-Shifted,Maga-Sub} proved by different methods the following subconvexity bound
\begin{equation}\label{1eq: subconvexity}
L (\pi \otimes \chiup, 1/2 ) \lll C(\chiup)^{ \beta }
\end{equation}
for all $\beta > 2 \widetilde{\theta} = \text{\Large$\frac 3 {8} \hskip -1 pt $} + \text{\Large$ \hskip -1 pt \frac {1} {4}$}\theta$, where $\pi$ is an automorphic representation of $\GL_2 (\BA)$, $\chiup$ is a Hecke character of $\BA^{\times}$ of (analytic) conductor $ C(\chiup)$. The $\theta$ is any exponent toward the Ramanujan-Petersson conjecture for $\GL_2 (\BA)$.  The best
known value $\theta = \text{\Large$ \hskip -1 pt \frac {7} {64}$}$ is due to Kim-Sarnak \cite{Kim-Sarnak} over $\BQ$, and to Blomer-Brumley \cite{Blomer-Brumley} over an arbitrary number field. Note that if $\theta = 0$ then we have  the Burgess exponent $ 2 \widetilde{\theta} = \text{\Large$\frac 3 {8} \hskip -1 pt $}$.

Now in our settings \eqref{1eq: subconvexity} would imply
\begin{align}\label{1eq: subconvexity, D}
L (\pi \otimes \chiup_D, 1/2 ) \lll |D|_{S_\infty}^{ \beta }, \hskip 15 pt    |D|_{S_\infty} \to  \infty,
\end{align}
if $D$ is a square-free integer in $F^{\times}$ (see \S \ref{sec: Waldspurger, twisted}). Here $S_\infty$ is the set of Archimedean places of $F$ and $|D|_{S_\infty}=\prod_{\varv \shskip \in S_\infty}|D|_\varv$. We shall obtain the following nontrivial bound  on the Fourier coefficients of a cusp form for  $\widetilde{\SL}_2 (\BA)$. See \S \ref{sec: Ramanujan-Lindelof} for the notations.

\begin{thm}
	 Let $\widetilde \pi $ be an irreducible cuspidal automorphic representation of $\widetilde{\SL}_2 (\BA)$ in $\widetilde A_{0\shskip 0}$. Let $\widetilde{\varphi}$ be a cusp form in the space of $ {\widetilde{\pi} }$. Let $D$ be a square-free integer in $F^{\times}$. Then, for any $\alpha > \widetilde{\theta} = \text{\Large$\frac 3 {16} \hskip -1 pt $} + \text{\Large$ \hskip -1 pt \frac {1} {8}$}\theta$, we have
	 \begin{align}\label{1eq: ineq Ramanujan}
	 \big|d_{\widetilde \pi}(\widetilde \varphi, S_\infty, \psi^D) \big|\lll |D|_{S_\infty}^{\alpha- \frac 1 2}
	 \end{align}
	 as $|D|_{S_\infty}\to \infty$, where the implied constant depends only on $\widetilde \pi, \widetilde \varphi$ and $\alpha$. The $\theta$ is any exponent toward the Ramanujan-Petersson conjecture for $\GL_2 (\BA)$.
\end{thm}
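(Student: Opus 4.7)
The plan is to combine the twisted Waldspurger formula derived in Section~5 out of the basic identity~(1.1) with the subconvexity bound~(1.2)--(1.3). First, I would fix $\widetilde\pi$ and the cusp form $\widetilde\varphi$ from the statement and let $\pi$ be the cuspidal representation of $\PGL_2 (\BA)$ matched to $\widetilde\pi$ under the Shimura--Waldspurger correspondence. By Waldspurger's dichotomy (Section~4), there is a distinguished subset of $F^{\times}$ attached to $\widetilde\pi$ such that $\widetilde\pi = \Theta (\pi \otimes \chiup_D, \psi^D)$ exactly when $D$ belongs to this subset; for $D$ outside it, the $D$-th Fourier coefficient $d_{\widetilde \pi}(\widetilde\varphi, S_\infty, \psi^D)$ vanishes identically, and the bound~(1.4) is automatic in that case.

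For square-free $D$ lying in this distinguished subset, I would invoke the twisted Waldspurger formula of Section~5, unfolded at the level of the chosen cusp form $\widetilde\varphi$, to obtain an identity of the shape
\begin{equation*}
\big| d_{\widetilde \pi} (\widetilde \varphi, S_\infty, \psi^D) \big|^2 \ \lll_{\widetilde\pi,\, \widetilde\varphi}\ |D|_{S_\infty}^{-1}\cdot L^S(\pi \otimes \chiup_D, 1/2),
\end{equation*}
where the explicit $|D|_{S_\infty}^{-1}$ is the archimedean local Whittaker normalization produced by the twist $\pi \mapsto \pi \otimes \chiup_D$, and the contributions at places $\varv \in S \smallsetminus S_\infty$ are absorbed into the implicit constant since the bad finite set of ramified places for $\widetilde\varphi$ is fixed once and for all. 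Inserting the subconvexity bound~(1.3), namely $L^S(\pi \otimes \chiup_D, 1/2) \lll |D|_{S_\infty}^{\beta}$ for any $\beta > 2\widetilde\theta$, and taking square roots with $\alpha = \beta / 2$ then produces~(1.4), since the condition $\alpha > \widetilde\theta$ is precisely $\beta > 2 \widetilde\theta$.

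The main obstacle is in extracting the correct $|D|_{S_\infty}^{-1}$ archimedean scaling from the basic Waldspurger formula under the twist $\pi \mapsto \pi \otimes \chiup_D$. At real places this is in essence already in \cite{BaruchMao-Global}, but at complex places I would rely on the new Bessel identities over $\BC$ of \cite{Chai-Qi-Bessel}, which are the precise local input making the twisted formula in Section~5 available, together with a direct local computation of how the Whittaker integrals for $\pi_\varv \otimes \chiup_{D,\varv}$ scale with $|D|_\varv$. A secondary, routine matter is the transition from the partial $L$-function $L^S$ to the complete $L$-function in the subconvexity statement~(1.2), which introduces only the archimedean $L$-factors (handled by Stirling) and finitely many local factors at $\varv \in S \smallsetminus S_\infty$, each contributing admissible polynomial growth in $|D|_{S_\infty}$ that is absorbed into the arbitrarily small slack in the strict inequality $\alpha > \widetilde\theta$.
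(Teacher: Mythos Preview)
Your overall strategy matches the paper's: use the twisted Waldspurger formula (Theorem~\ref{thm:Waldspurger-Twist}) to relate $|d_{\widetilde\pi}(\widetilde\varphi,S_\infty,\psi^D)|^2$ to $L(\pi\otimes\chiup_D,1/2)$, then insert the subconvexity bound~\eqref{1eq: subconvexity, D}. The vanishing for $D$ outside the relevant piece of the partition is handled exactly as you describe.

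However, two points in your discussion are misdirected. First, the factor $|D|_{S_\infty}^{-1}$ is \emph{not} produced by any archimedean Whittaker computation; it comes directly from the global factor $|D|_S$ already present in~\eqref{4eq: Waldspurger twisted}, after splitting $|D|_S = |D|_{S_\infty}\prod_{\varv\in S\smallsetminus S_\infty}|D|_\varv$ and absorbing the finite-place pieces. No further local input at complex places is needed here, since the Bessel identities of \cite{Chai-Qi-Bessel} are already baked into Theorem~\ref{thm:Waldspurger-Twist}. Second, the finite local contributions at $\varv\in S\smallsetminus S_\infty$ do \emph{not} exhibit polynomial growth in $|D|_{S_\infty}$ to be absorbed into the slack; they are genuinely bounded. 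The paper packages them as
\[
C_\varv(D,\pi_\varv,\widetilde\varphi_\varv)=|D|_\varv\, L(\pi_\varv\otimes\chiup_D,1/2)\cdot\|\widetilde\varphi_\varv\|_D^2/|\widetilde L_\varv^D(\widetilde\varphi_\varv)|^2
\]
and shows $|C_\varv|\asymp 1$ uniformly in square-free $D$: the first two factors take only finitely many values since $D$ is square-free and $\chiup_{D,\varv}$ depends only on $D\bmod F_\varv^{\times 2}$, while the uniform boundedness of the Whittaker ratio is the content of \cite[Lemma~7.2]{BaruchMao-Global}. Your justification (``the bad finite set is fixed once and for all'') is insufficient, since the local quantities still depend on $D$; this lemma is the actual nontrivial ingredient you are missing.
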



\subsection{Structure of the paper}

An outline of the paper is as follows. In \S \ref{sec: Fourier coefficients, d} we introduce the two constants  $d_{\pi} (S, \psi)$ and $d_{\widetilde\pi} (S, \psi)$. In \S \ref{sec: Waldspurger dichotomy} we recollect Waldspurger's results on theta correspondence \cite{Waldspurger-Shimura,Waldspurger-Shimura3}. In \S \ref{sec: Waldspurger formula} we give the explicit statement of our Waldspurger formula. In \S \ref{sec: Jacquet-RTF} we review the relative trace formula of Jacquet in \cite{Jacquet-RTF}. In \S \ref{sec: local Bessel identity} we review the local Bessel identities in \cite{BaruchMao-NA,BaruchMao-Real,Chai-Qi-Bessel}. In \S \ref{sec: proofs} we prove the  Waldspurger formula. In \S  \ref{sec: Ramanujan-Lindelof} we prove the  the Ramanujan-Lindel\"of equivalence and establish a nontrivial bound toward the metaplectic Ramanujan conjecture.
	
	\subsection*{Notation} 
	Let $F$ be a number field, and $\BA$ be its ad\`ele ring. For a place $\varv$ of $F$, let $F_{\varv}$ be the corresponding local field, and $|\ |_{\varv}$ denote the normalized metric on $F_{\varv}$. When $\varv$ is a non-Archimedean place, let $O_{\varv}$ be the ring of integers in $F_{\varv}$, 
	and $q_{\varv}$ be the order of the residue field.
	
	For  $D \in F^{\times}$, let $\chiup_D $ be the quadratic character of $\BA^{\times} / F^{\times}$ associated to the field extension $F (\sqrt {D})$. Similarly, at a local place $\varv$, for $D \in F_{\varv}^{\times}$,
	we let $\chiup_D $  be the quadratic character of $F_{\varv}^{\times}$ associated to the extension $F_{\varv} (\sqrt {D})$.
	
	Let $\psi$ be a nontrivial additive character of $\BA / F$, with $\psi = \otimes_{\shskip\varv} \, \psi_{\varv}$. For $D \in F^{\times}$, define $\psi^D (x) = \psi (D x)$.
	
	
\subsubsection*{Groups}	

Let $G = \GL_2$, $S =   {\SL}_2$ and $\widetilde S = \widetilde {\SL}_2$. 
	Let $Z$ be the center of $G$, $B$ be the subgroup of $G$ consisting of upper triangular matrices,  $\widetilde{B}_S$ be the lift of $B \cap S$ in $\widetilde{S}$. Note that $G / Z = \PGL_2$. We shall use $1$ to denote the identity elements of these groups.
	
	Let us now briefly recall the definition of the metaplectic group $\widetilde S$. It consists of all pairs $(g, \epsilon)$,  $g \in S$, $\epsilon \in \{\pm 1 \}$, with the  law of multiplication
	\begin{align*}
	(g_1, \epsilon_1) (g_2, \epsilon_2) = (g_1 g_2, \alpha (g_1, g_2) \epsilon_1 \epsilon_2 ),
	\end{align*}
	where  $\alpha (g_1, g_2)$ is Kubota's cocycle that we now describe. Set
	\begin{equation*}
	x (g) = \left\{ \begin{split}
	& c \hskip 10 pt \text{ if } c \neq 0,\\
	& d \hskip 10 pt \text{ if } c = 0,
	\end{split}\right. \hskip 15 pt g = \begin{pmatrix}
	a & b \\ c & d
	\end{pmatrix}.
	\end{equation*}
	On the other hand the Hilbert symbol $(a, b)$ is defined by $(a, b) = 1$ if $a$ has the form $a = x^2 - b y^2$, $(a, b) = -1$ if not.
	Then
	\begin{align*}
	\alpha (g_1, g_2) = 
	\bigg(\frac{x (g_1 g_2)}{x (g_1)}, \frac{x (g_1 g_2)}{x (g_2)}\bigg).
	\end{align*}
The group  $\widetilde{S}$ thus fits into the exact sequence
\begin{align*}
1 \longrightarrow \{ \pm 1 \} \longrightarrow \widetilde{S} \longrightarrow S \longrightarrow 1.
\end{align*}
	Let
	\begin{align*}
& n(x) =  \begin{pmatrix}
1 & x \\ & 1
\end{pmatrix},  \hskip 10 pt  \widetilde{n} (x) = (n(x), 1), \hskip 10 pt  \varw = \begin{pmatrix}
 & 1 \\ 1 &
\end{pmatrix}, \hskip 10 pt \widetilde{\varw} = (\begin{pmatrix}
& -1 \\ 1 &
\end{pmatrix} \hskip -2 pt, 1), \\
& \hskip 25 pt  t (a) = \begin{pmatrix}
a & \\ & 1
\end{pmatrix}, \hskip 10 pt
\widetilde{s} (a) = (\begin{pmatrix}
a & \\ & a\-
\end{pmatrix} \hskip -2 pt, 1) , \hskip 10 pt z(c) = \begin{pmatrix}
c & \\ & c
\end{pmatrix}.
	\end{align*}
	
\subsubsection*{Measures}

We fix the additive measure $d x$ on $F_{\varv}$ to be self dual for the character $\psi_{\varv}$. Let the multiplicative measure be $\dx  a = (1-q_{\varv}\-)\- \cdot |a|_{\varv}\- d a$, where $q_{\varv}$ is the order of the residue field when $\varv$ is
non-Archimedean and $q_{\varv} = \infty$ when $\varv$ is Archimedean. Let $d g = |a|_{\varv} \shskip \dx c \shskip \dx a \shskip dx \shskip d y$ be the measure on $ G (F_{\varv})$ if we use the Bruhat coordinates $g = z(c)n(x)\varw \shskip t(a) n(y)$ on $G (F_{\varv}) \smallsetminus B (F_{\varv})$. The measure on $Z (F_{\varv})$
is $dz(c) = \dx c$, and we use the resulting quotient measure on $G(F_{\varv}) /Z(F_{\varv})$. For $g \in \widetilde{S} (\Fv) \smallsetminus \widetilde{B}_S (\Fv) $ with $g = \widetilde{n} (x) \widetilde{\varw} \shskip \widetilde{s} (a) \widetilde{n} (y)$, we define $d g = |a|_{\varv}^2 \shskip \dx a \shskip d x \shskip d y$ to be the measure on $\widetilde{S} (F_{\varv})$.

It should be stressed that the choice of additive measure does not matter for the statement of our theorems.

\subsubsection*{The Weil constant}

Define the Weil constant $\gamma (a, \psi_{\varv}^D)$ over $F_{\varv}$ to satisfy
\begin{align*}
\int \widehat{\Phi}^{2 D} (x) \psi_{\varv}^D (a x^2) d x = |a|_{\varv}^{-  1/ 2} \gamma (a, \psi^D)  \int  {\Phi} (x) \psi_{\varv}^D (- a\- x^2) d x,
\end{align*}
where $\Phi$ is a Schwartz-Bruhat function on $F_{\varv}$ and 
\begin{align*}
\widehat{\Phi}^D (x) = \int \Phi (y) \psi_{\varv}^{D} (xy) d y.
\end{align*}

\subsubsection*{Representations}

We use $\pi$ to denote an irreducible cuspidal representation of $G(\BA)$ with
trivial central character, and use $\widetilde {\pi}$ to denote an irreducible cuspidal representation
of $\widetilde{S}(\BA)$.
We have $\pi = \otimes_{\shskip\varv} \, \pi_{\varv}$ and $\widetilde\pi = \otimes_{\shskip\varv} \, \widetilde \pi_{\varv}$ as the restricted tensor products of representations
over local fields. Let $V_{\pi}$, $V_{\widetilde{\pi} }$, $V_{\pi_\varv}$, $V_{\widetilde{\pi}_\varv }$ denote the underlying
spaces of $\pi$, $\widetilde \pi$, $  \pi_{\varv}$ and $\widetilde \pi_{\varv}$ respectively. We  use $\|\varphi\|$ to denote the norm of a vector $\varphi$ in $V$; namely, let $\|\varphi\| = (\varphi, \varphi)^{ \frac 1  2}$ if $(\cdot, \cdot)$ is the Hermitian form on a space $V$.


Let the $L$-function $L(\pi, s)$ and the $\epsilon$-factor $\epsilon (\pi, s)$ be defined as in \cite{J-L}. We have $L(\pi, s) = \prod L(\pi_{\varv}, s)$ and $\epsilon (\pi, s) = \prod \epsilon (\pi_{\varv}, s, \psi_{\varv} )$; however $\epsilon (\pi_{\varv}, 1/2 ) = \epsilon (\pi_{\varv}, 1/2, \psi_{\varv} )$ is independent on $\psi_{\varv}$.

We use $S$ to denote a finite set of local places. As usual, let $S_{\infty}$ denote the set of all Archimedean places. Let $|\ |_{S}$ be the product of the local metrics over $S$. Let  $L^S (\pi, s) = \prod_{\varv \shskip \notin S} L (\pi_{\varv}, s)$ be the partial $L$-function above the complement of $S$. 

We say that $S$
contains all bad places if it contains all $\varv$ that are Archimedean or have even residue characteristic. Then it is known that the covering  $\widetilde{\SL}_2 (F_{\varv})$ splits over $\SL_2 (O_{\varv})$ if $\varv \notin S$. Let $V_{\pi}^S$ or $V_{\tpi}^S$ be the subspace of vectors in $V_{\pi}$ or $V_{\tpi}$ which are invariant under $\prod_{\varv \shskip \notin S} \GL_2 (O_{\varv})$ or $\prod_{\varv \shskip \notin S} \SL_2 (O_{\varv})$ respectively.

	
\section{Definition of Two Constants}\label{sec: Fourier coefficients, d}

In this section, we define the two constants $d_{\pi} (S, \psi)$ and $d_{\widetilde\pi} (S, \psi)$ that occur in Theorem \ref{thm: main1}. For $D \in F^{\times}$, $d_{\pi} (S, \psi^D)$ and $d_{\widetilde\pi} (S, \psi^D)$ may be regarded as the {\it $D$-th Fourier coefficients} for $\pi$ and $\widetilde{\pi}$ respectively. 

\subsection{Definition of $d_{\pi} (S, \psi)$}\label{sec: defn of d} \

\vskip 5 pt

\subsubsection{Whittaker model on $G$}\label{sec: Whittaker model, G} Let $\pi$ be an irreducible cuspidal
automorphic representation of $G (\BA)$ with trivial central character. Let
$V_{\pi} \subset L^2 (Z(\BA) G(F) \backslash G(\BA) )$ be the underlying space of $\pi$. The global $\psi$-Whittaker model of $\pi$ consists of all the Whittaker functions $W_{\varphi}^{\psi}$ defined by
\begin{align}
W_{\varphi} (g) = W_{\varphi}^{\psi} (g) = \int_{\BA/F} \varphi (n(x) g) \psi (-x) d x, \hskip 10 pt \varphi \in V_{\pi}.
\end{align}

For any given local component $\pi_{\varv}$, we shall fix a nontrivial $\psi_{\varv}$-Whittaker functional $L_{\varv}$, satisfying
\begin{align*}
L_{\varv} (\pi_{\varv} (n(x)) \tv ) = \psi_{\varv} (x) L_{\varv} (\tv), \hskip 10 pt \tv \in V_{\pi_\varv}.
\end{align*}
	
Let $S$ be a finite set of places  that contains all bad places along with places $\varv$ where $\pi_{\varv}$ is not unramified. For $\varv \notin S$, let $\varphi_{\ro, \shskip \varv}$ be the unique vector in $V_{\pi_\varv}$ fixed under the action of $G(O_{\varv})$ such that $L_{\varv} (\varphi_{\ro, \shskip \varv}) = 1$. 
Subsequently, we shall always assume that a pure tensor $\varphi = \otimes_{\shskip\varv} \, \varphi_{\varv}$ in $V_{\pi}^S$ has local component $\varphi_{ \varv} = \varphi_{\ro, \shskip \varv}$ outside $S$. 

We note that
\begin{align*}
L (\varphi) = W_{\varphi} (1)
\end{align*}
gives rise to a $\psi$-Whittaker functional on $V_{\pi}$. From the uniqueness of the local
Whittaker functional, $L$ can been expressed as a product of  $L_{\varv}$. Precisely, there is a constant $c_1 (\pi, S, \psi, \left\{L_{\varv}\right\})$ such that
\begin{equation}\label{2eq: c1}
W_{\varphi} (1) = c_1 (\pi, S, \psi, \left\{L_{\varv}\right\}) \prod_{ \varv \shskip \in S} L_{\varv} (\varphi_{\varv})
\end{equation}
for any pure tensor $\varphi \in V_{\pi}^S$. 

\vskip 5 pt

\subsubsection{Hermitian forms for $G$} The space $V_{\pi}$ has the Hermitian form
\begin{align}
(\varphi, \varphi') = \int_{Z(\BA) G(F) \backslash G(\BA)} \varphi (g) \overline {\varphi' (g) } d g.
\end{align}
Given the nontrivial Whittaker functional $L_{\varv}$ on $\pi_{\varv}$, we can define a $G(F_{\varv})$-invariant Hermitian form on $V_{\pi_\varv}$ by
\begin{align}\label{2eq: Hermitian, local, G}
(\tv, \tv') = \int_{ F_{\varv}^{\times} } L_{\varv} (\pi_{\varv} (t (a)) \tv ) \overline{L_{\varv} (\pi_{\varv} (t (a)) \tv' )} \frac {d a} {|a|_{\varv}}, \hskip 10 pt \tv, \tv' \in V_{\pi_\varv}.
\end{align}
From the uniqueness of $G_{\varv}$-invariant Hermitian forms, we infer that there is a constant  $ c_2 (\pi, S, \psi, \left\{L_{\varv}\right\}) > 0$  such that
\begin{align}\label{2eq: c2}
\|\varphi \| = c_2 (\pi, S, \psi, \left\{L_{\varv}\right\}) \prod_{ \varv \shskip \in S} \|\varphi_{ \varv}\|,
\end{align}
for any  pure tensor $\varphi \in V_{\pi}^S$.

\vskip 5 pt

\subsubsection{The constant $d_{\pi} (S, \psi)$} We define
\begin{align}\label{2eq: defn of d}
d_{\pi} (S, \psi) = \left| c_1 (\pi, S, \psi, \left\{L_{\varv}\right\}) / c_2 (\pi, S, \psi, \left\{L_{\varv}\right\}) \right|.
\end{align}
It is easy to verify that $ d_{\pi} (S, \psi) $ is well defined in the sense that it is independent on the choice of the linear forms $L_{\varv}$. Also $ d_{\pi} (S, \psi) $ is independent on the the choice of the additive measure on $F_{\varv}$.
It follows from \eqref{2eq: c1} and \eqref{2eq: c2} that
\begin{align}\label{2eq: explicit d}
d_{\pi} (S, \psi) = \frac{|W_{\varphi}(1)|}{\|\varphi\|} \prod_{ \varv \shskip \in S} \frac {\|\varphi_{ \varv}\|} {|L_{\varv} (\varphi_{\varv}) | }
\end{align}
for  any pure tensor $\varphi \in V_{\pi}^S$ such that $L_{\varv} (\varphi_{\varv}) \neq 0$ for $\varv \in S$. We let $e (\varphi_{ \varv}, \psi_{\varv})$ denote the square of the local factor occurring in \eqref{2eq: explicit d}, namely,
\begin{align}\label{2eq: local e(phi)} e (\varphi_{ \varv}, \psi_{\varv}) =   {\|\varphi_{ \varv}\|^2} / { | L_{\varv} (\varphi_{ \varv}) |^2 }.
\end{align}
It is clear that the quotient on the right is independent on the choice of $L_{\varv}$.

\subsection{Definition of $d_{\shskip\widetilde{\pi}} (S, \psi)$} \

\vskip 5 pt

\subsubsection{Whittaker model on $\widetilde{S}$} We proceed in the same way as in \S \ref{sec: Whittaker model, G}. Let $\tpi$ be an irreducible cuspidal
automorphic representation of $\widetilde{S}$. For $\widetilde{\varphi} \in V_{\widetilde{\pi}} $, define
\begin{align}
	\widetilde W_{\widetilde\varphi} (g) = \widetilde W_{ \widetilde \varphi}^{\psi} (g) = \int_{\BA/F} \widetilde \varphi (\widetilde n(x) g) \psi (-x) d x.
\end{align}
Later, we shall use the abbreviation $\widetilde W_{\widetilde\varphi}^D (g) = \widetilde W_{ \widetilde \varphi}^{\psi^D} (g)$ for $D \in F^{\times}$.

Let us assume at the moment that $\tpi$ has a nontrivial $\psi$-Whittaker model. 
Then locally each $\tpi_{\varv}$ has a nontrivial $\psi_{\varv}$-Whittaker
model, unique up to a scalar multiple. We shall  fix the corresponding $\psi_{\varv}$-Whittaker functional $\widetilde{L}_{\varv}$, satisfying
\begin{align*}
\widetilde L_{\varv} (\tpi_{\varv} (\widetilde n(x)) \widetilde{\tv} ) = \psi_{\varv} (x) \widetilde L_{\varv} (\widetilde{\tv}), \hskip 10 pt \widetilde{\tv} \in V_{\tpi_\varv}.
\end{align*}

Let $S$ be a finite set of places that  contains all bad places along with places $\varv$ where $\tpi_{\varv}$ is not unramified. For $\varv \notin S$, let $\widetilde\varphi_{\ro, \shskip \varv}$ be the unique vector in $V_{\tpi_\varv}$ fixed under the action of $\SL_2(O_{\varv})$ such that $\widetilde L_{\varv} (\widetilde \varphi_{\ro, \shskip \varv}) = 1$. 
It will be understood that  a pure tensor $\widetilde \varphi = \otimes_{\shskip\varv} \, \widetilde \varphi_{\varv}$ in $V_{\tpi}^S$ has local component $\widetilde\varphi_{ \varv} = \widetilde\varphi_{\ro, \shskip \varv}$ outside $S$.

Define the constant $\widetilde c_1 (\tpi, S, \psi, \{\widetilde L_{\varv}\})$ such that, for all  pure tensors $\widetilde \varphi $ in $V_{\tpi}^S$, 
\begin{equation}\label{2eq: tilde c1}
\widetilde W_{\widetilde\varphi} (1) = \widetilde c_1 (\tpi, S, \psi,  \{\widetilde L_{\varv} \}) \prod_{ \varv \shskip \in S} \widetilde L_{\varv} (\widetilde \varphi_{\varv}) .
\end{equation}

\subsubsection{Hermitian forms for $\widetilde{S}$}   The space $V_{\tpi}$ has the Hermitian form
\begin{align}
(\widetilde\varphi, \widetilde\varphi') = \int_{S (F) \backslash \widetilde{S} (\BA)} \widetilde\varphi (g) \overline {\widetilde\varphi' (g) } d g;
\end{align}
here $S (F)$ splits in  $\widetilde{S} (\BA)$ so that we may regard $S (F)$ as a discrete subgroup of $\widetilde{S} (\BA)$.
The local Hermitian form on $ V_{\tpi_\varv} $ may be defined similar to \eqref{2eq: Hermitian, local, G}, though the definition is more complicated. According to \cite[\S 9]{BaruchMao-NA} and \cite[\S 15, 16]{BaruchMao-Real}, there is a choice of $\psi_{\varv}^{\delta}$-Whittaker functionals $\widetilde L_{\varv}^{\delta}$ (could be trivial)  on $ V_{\tpi_\varv} $ for $\delta$ in a collection of representatives of $F^{\times}_{\varv} / F^{\times \hskip 1 pt 2}_{\varv}$ containing $1$ ($\widetilde L_{\varv}^{1} = \widetilde L_{\varv}$), such that
\begin{align}
(\widetilde{\tv}, \widetilde{\tv}') = \sum_{\delta} \frac {\shskip|2|_{\varv}} 2 \int_{ F_{\varv}^{\times} } \widetilde L_{\varv}^{\delta} (\widetilde\pi_{\varv} (\widetilde{s} (a)) \widetilde{\tv} ) \hskip 1pt \overline{\widetilde L_{\varv}^{\delta} (\widetilde\pi_{\varv} (\widetilde{s} (a)) \widetilde{\tv}' )} \frac {d a} {|a|_{\varv}},  \hskip 10 pt \widetilde\tv, \widetilde\tv' \in V_{\tpi_\varv},
\end{align}
is an $\widetilde{S} (F_{\varv})$-invariant Hermitian form.
Define the constant $\widetilde c_2 (\tpi, S, \psi, \{\widetilde L_{\varv}\}) $ such that
\begin{align}\label{2eq: c2, S}
\|\widetilde\varphi \| = \widetilde c_2 (\tpi, S, \psi, \{\widetilde L_{\varv}\}) \prod_{ \varv \shskip \in S} \|\widetilde\varphi_{ \varv}\|
\end{align}
for all pure tensors $\widetilde \varphi $ in $V_{\tpi}^S$.

\subsubsection{The constant $d_{\shskip \tpi} (S, \psi)$} We define
\begin{align}
d_{\shskip \tpi} (S, \psi) =  \big|\widetilde c_1 (\tpi, S, \psi,  \{\widetilde L_{\varv} \}) / \widetilde c_2 (\tpi, S, \psi,  \{\widetilde L_{\varv} \})  \big|.
\end{align}
This definition is  independent on the choice of the linear forms $\widetilde L_{\varv}$ and the additive measure on $F_{\varv}$.
Additionally, we set
$d_{\shskip \tpi} (S, \psi) = 0$ when $\tpi$ does not have a nontrivial $\psi$-Whittaker model.
We have
\begin{align}\label{2eq: explicit d tilde}
d_{\shskip \tpi} (S, \psi) = \frac{|\widetilde W_{\widetilde \varphi}(1)|}{\|\widetilde \varphi\|} \prod_{ \varv \shskip \in S} \frac {\|\widetilde \varphi_{ \varv}\|} {|\widetilde L_{\varv} (\widetilde \varphi_{\varv}) | }
\end{align}
for any pure tensor $\widetilde \varphi \in V_{\tpi}^S$ such that $\widetilde L_{\varv} (\widetilde \varphi_{\varv}) \neq 0$ for $\varv \in S$. We let $e (\widetilde\varphi_{ \varv}, \psi_{\varv})$ denote the square of the local factor in \eqref{2eq: explicit d tilde}, namely,
\begin{align}\label{2eq: local e(tilde phi)}
& e (\widetilde\varphi_{ \varv}, \psi_{\varv}) =  {\|\widetilde\varphi_{ \varv}\|^2} / { | \widetilde L_{\varv} (\widetilde\varphi_{ \varv}) |^2 }.
\end{align}
Note that the quotient on the right is independent on the choice of $\widetilde L_{\varv}$.
When working with  $\psi_{\varv}^D$, we shall denote by  $\widetilde{L}_{\varv}^D$ the $\psi_{\varv}^D$-Whittaker functional and by $\| \widetilde\varphi_{ \varv} \|_{D}$ the local Hermitian norm of $\widetilde \varphi_{ \varv}$ constructed from $ \widetilde{L}_{\varv}^D $.
	
	\section{Results on the Theta Correspondence \`a la Waldspurger}\label{sec: Waldspurger dichotomy}
	
	In this section, we describe Waldspurger's beautiful results  in \cite{Waldspurger-Shimura, Waldspurger-Shimura3} on the theta correspondence, both local and global, between $\PGL_2 $ and $\widetilde {\SL}_2 $.
	
	Given $\psi = \otimes_{\shskip\varv} \, \psi_{\varv}$, let $\theta (\shskip\cdot \shskip,\psi_\varv)$ and $\Theta (\shskip\cdot \shskip,\psi)$ be the local and global  theta correspondence, respectively, as defined in \cite{Waldspurger-Shimura, Waldspurger-Shimura3}.
	
	\subsection{Local theory of Waldspurger}
	
Let $\pi_\varv$ be an irreducible unitary representation of $\PGL_2(F_\varv)$. For $D\in F_\varv^\times$, define
	\begin{align*}
	\epsilon({D}, {\pi_\varv}) =\chiup_D(-1)\epsilon(\pi_\varv,1/2)/\epsilon(\pi_\varv\otimes \chiup_D, 1/2),
	\end{align*}
	which takes value in $\{ \pm 1 \}$.
	Set
	\begin{align*}
	F_\varv^{\pm}(\pi_\varv)=\left\{D\in F_\varv^\times   :   \epsilon({D}, {\pi_\varv})=\pm 1\right\},
	\end{align*}
	 and we have a partition $F_\varv^\times=F_\varv^+(\pi_\varv)\cup F_\varv^-(\pi_\varv). $

	
	\begin{thm}[Waldspurger]\label{thm: Waldspurger-Local} Let $P_{0,\shskip \varv}$ be the set of equivalence classes of discrete series of $\PGL_2(F_\varv)$.
		
		{\rm(1).} When $\pi_\varv\notin P_{0,\shskip\varv}$, $F_\varv^\times(\pi_\varv)=F_\varv^\times$ and $\theta(\pi_\varv\otimes \chiup_D, \psi_\varv^D) =\theta(\pi_\varv, \psi_\varv)$ for all $D \in F_{\varv}^{\times}$.
		
		{\rm(2).} When $\pi_\varv\in P_{0,\shskip\varv}$, there are two distinct representations $\widetilde \pi_\varv^+=\theta(\pi_\varv,\psi_\varv)$ and $\widetilde \pi_\varv^-$ of $\widetilde {\SL}_2(F_\varv)$, such that
		\begin{equation*}
\theta(\pi_\varv\otimes \chiup_D, \psi_\varv^D)= \left\{ \begin{split}
		&\widetilde \pi_\varv^+ \hskip 10 pt \text{ if } D\in F_\varv^+(\pi_\varv), \\
		&\widetilde \pi_\varv^- \hskip 10 pt \text{ if } D\in F_\varv^-(\pi_\varv).
\end{split}\right.
		\end{equation*}

        {\rm(3).} $\theta(\pi_\varv\otimes \chiup_D, \psi_\varv^D)= \widetilde\pi_\varv^{\pm}$ if and only if $\widetilde\pi_\varv^{\pm}$ has a nontrivial $\psi_\varv^D$-Whittaker model.	
	\end{thm}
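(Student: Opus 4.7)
The plan is to realize $\theta(\shskip\cdot\shskip, \psi_\varv^D)$ through the Weil representation of the reductive dual pair $\widetilde{\SL}_2(F_\varv) \times \mathrm{O}(V_D)$, where $V_D$ runs over the quadratic spaces whose discriminant is represented by $D$. Under this identification the change of additive character from $\psi_\varv$ to $\psi_\varv^D$ is absorbed into a rescaling of the quadratic form, so the problem reduces to analyzing how the theta lift from $\PGL_2(F_\varv)$ depends on the Witt class of the chosen quadratic space, and to tracing how twisting $\pi_\varv$ by $\chiup_D$ shifts that choice.

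For part (1), when $\pi_\varv$ is not a discrete series it is a subquotient of a principal or complementary series. I would argue that in this case the theta lift is a fixed genuine principal series of $\widetilde{\SL}_2(F_\varv)$, independent of the quadratic space used. Unfolding the theta kernel against a Whittaker integral shows that $\theta(\pi_\varv, \psi_\varv^D)$ carries a $\psi_\varv^D$-Whittaker model for every $D$; uniqueness and irreducibility then force this lift to coincide with $\theta(\pi_\varv, \psi_\varv)$ at the level of the underlying $\widetilde{\SL}_2(F_\varv)$-representation. In parallel, applying Tate's local functional equation to $\pi_\varv$ and $\pi_\varv \otimes \chiup_D$ yields $\epsilon(D, \pi_\varv) = +1$ for every $D$, so $F_\varv^\times(\pi_\varv) = F_\varv^+(\pi_\varv) = F_\varv^\times$.

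For part (2), the heart of the matter, I would use the local Jacquet-Langlands-Shimizu correspondence. A discrete series $\pi_\varv$ on $\PGL_2(F_\varv)$ transfers to a representation $\pi_\varv'$ of the anisotropic quaternion algebra $B_\varv^\times$. The local theta lift of $\pi_\varv \otimes \chiup_D$ via $\mathrm{O}(V_D)$ is nonzero exactly when $V_D$ lies in the Witt class of the norm form on one of the two quaternion algebras over $F_\varv$ supporting the transfer of $\pi_\varv$; the two choices (split versus anisotropic) produce the two distinct genuine representations $\widetilde\pi_\varv^{+}$ and $\widetilde\pi_\varv^{-}$. The dichotomy $F_\varv^\times = F_\varv^+(\pi_\varv) \cup F_\varv^-(\pi_\varv)$ would then follow by identifying $\epsilon(D, \pi_\varv)$ with the Hasse invariant that distinguishes the two Witt classes of rank-three quadratic forms of discriminant $D$. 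Matching $\epsilon(D, \pi_\varv)$ with this quaternionic invariant is the main obstacle; in Waldspurger's original treatment it rests on a delicate computation with local zeta integrals and the gamma factor attached to $\pi_\varv \otimes \chiup_D$.

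For part (3), I would invoke Howe duality together with a direct unfolding argument. A nonzero $\psi_\varv^D$-Whittaker functional on $\theta(\pi_\varv \otimes \chiup_D, \psi_\varv^D)$ is produced by unfolding the theta kernel in this rank-one setting, so the asserted lift always has the claimed Whittaker model. Conversely, if the opposite representation $\widetilde\pi_\varv^{\mp}$ also admitted a nontrivial $\psi_\varv^D$-Whittaker model, one would realize it as a second theta lift of $\pi_\varv \otimes \chiup_D$, contradicting uniqueness of the local Howe correspondence in conjunction with the dichotomy established in part (2).
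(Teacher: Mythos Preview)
The paper does not supply a proof of this theorem. It is stated as a result of Waldspurger, with citation to \cite{Waldspurger-Shimura, Waldspurger-Shimura3}, in a section whose purpose is only to ``describe Waldspurger's beautiful results''; no argument is given or even sketched. There is therefore no proof in the paper to compare your proposal against.

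As a standalone outline of how one might prove the theorem, your proposal is broadly faithful to the circle of ideas in Waldspurger's original work: realizing the change of additive character as a rescaling of the quadratic space, invoking the Jacquet--Langlands correspondence to produce the two-element packet in the discrete-series case, and reading off the Whittaker dichotomy from the theta construction together with Howe duality. That said, it remains a plan rather than a proof. In part (1) you assert $\epsilon(D,\pi_\varv)=+1$ for all $D$ without carrying out the gamma-factor computation (straightforward from multiplicativity for principal series, but it should be written down). In part (2) you correctly flag the identification of $\epsilon(D,\pi_\varv)$ with the Hasse invariant of the relevant ternary form as ``the main obstacle,'' and then leave it unresolved; this is precisely the nontrivial content of the theorem, and Waldspurger's argument for it (via local zeta integrals and the Tunnell--Saito style epsilon dichotomy) is substantial. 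Your proposal would need to actually execute this step, or cite a precise source that does, before it could count as a proof.
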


    \subsection{Global theory of Waldspurger}\label{sec: golbal Waldspurger}

Let $\widetilde A_{0 \shskip 0}$ be the set of irreducible cuspidal automorphic representations of $\widetilde {\SL}_2(\BA)$ that are orthogonal to the theta series attached to one dimensional quadratic forms. These are precisely those representations that admit the Shimura  lift. For  $\widetilde \pi_1$ and $ \widetilde \pi_2$ in $\widetilde A_{0\shskip 0}$, we say they are near equivalent, denoted by $\widetilde \pi_1 \thicksim \widetilde \pi_2$, if  $\widetilde \pi_{1,\shskip\varv}\cong \widetilde \pi_{2,\shskip\varv}$ for almost all places $\varv$. Use $\widebar{A}_{0\shskip 0}$ to denote the quotient of $\widetilde A_{0\shskip 0}$ by this relation.

Let $A_{0, \shskip i}$ be the set of irreducible cuspidal automorphic representations of $\PGL_2(\BA)$ 
such that $L(\pi\otimes \chiup_D, 1/2)\neq 0$ for some  $D\in F^\times$.

Let $\varSigma=\varSigma(\pi)$ be the set of places $\varv$ such that $\pi_\varv\in P_{0,\shskip\varv}$. Given $D\in F^\times$, let $\epsilon(D,\pi)=(\epsilon(D,\pi_\varv))_{\varv \shskip\in \varSigma}$. Note that $\epsilon(D,\pi)\in \{\pm 1\}^{|\varSigma|}$. We have
\begin{align}
\epsilon(\pi\otimes \chiup_D,1/2)=\epsilon(\pi, 1/2)\prod_{\varv\shskip\in \varSigma}\epsilon(D,\pi_\varv).
\end{align}
We  shall use $ {\epsilon} =(\epsilon_\varv)_{\varv \shskip\in \varSigma}$ to denote an element in $ \{\pm 1\}^{|\varSigma|} $. 
Given such an $\epsilon$, define
$$F^{\epsilon} (\pi) = \left\{ D \in F^{\times} : \epsilon (D, \pi) = \epsilon  \right \}.$$
Then we get a partition
$$ F^{\times} = \underset{\epsilon  \shskip\in \{\pm 1\}^{|\varSigma|}}{\text{$\bigcup$}}   F^{\epsilon} (\pi).  $$

\begin{thm}[Waldspurger]\label{thm:Waldspurger-Global} Let notations be as above.
	
{\rm(1).} $\Theta(\pi,\psi)\cong \otimes_{\shskip\varv} \shskip\theta(\pi_\varv,\psi_\varv)$ if $\Theta(\pi,\psi)\neq 0${\rm;} $\Theta(\widetilde \pi,\psi)\cong \otimes_{\shskip\varv} \shskip \theta(\widetilde \pi_\varv,\psi_\varv)$ if $\Theta(\widetilde \pi,\psi)\neq 0$.

{\rm(2).} $\Theta(\pi,\psi)\neq 0$ if and only if $L(\pi,1/2)\neq 0$. $\Theta(\widetilde \pi,\psi)\neq 0$ if and only if $\widetilde \pi$ has a nontrivial $\psi$-Whittaker model.

{\rm(3).} For $\widetilde \pi \in \widetilde A_{0\shskip 0}$, there is a unique $\pi$ associated to $\widetilde \pi$, such that $\Theta(\widetilde \pi,\psi^D)\otimes \chiup_D=\pi$ whenever $\Theta(\widetilde \pi, \psi^D)\neq 0$. Denote this association by $\pi=S_\psi(\widetilde \pi)$. The map $S_\psi$ then defines a bijection between $\widebar{A}_{0\shskip 0}$ and $A_{0,\shskip i}$.

{\rm(4).} Let $\pi=S_\psi(\widetilde \pi)$. There is a bijection between  the near equivalence class of $\widetilde \pi$ and the subset of $ \{\pm1\}^{|\varSigma|}$ comprising those $\epsilon $ such that $\prod_{\varv\shskip\in \varSigma}\epsilon_\varv= \epsilon(\pi, 1/2)${\rm;} denote by $\widetilde\pi^{\epsilon} $ the representation corresponding to $\epsilon$. For convenience, set $\widetilde \pi^\epsilon=0$ if $\prod_{\varv\shskip\in \varSigma}\epsilon_\varv\neq \epsilon(\pi,1/2)$.

{\rm(5).} If $\pi=S_\psi(\widetilde \pi)$, then the near equivalence class of $\widetilde \pi$ consists of all the nonzero $\Theta(\pi\otimes \chiup_D, \psi^D)$. Precisely, for $D\in F^\epsilon(\pi)$,
\begin{equation*}
\Theta(\pi\otimes \chiup_D, \psi^D)=
\left\{ \begin{split}
&\widetilde \pi^\epsilon, \hskip 10 pt \text{ if } L(\pi\otimes \chiup_D,1/2)\neq 0, \\
&0, \hskip 14 pt \text{ otherwise.}
\end{split}\right.
\end{equation*}


\end{thm}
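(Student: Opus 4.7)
The plan is to reconstruct Waldspurger's arguments in \cite{Waldspurger-Shimura, Waldspurger-Shimura3}. Realize $\Theta(\shskip\cdot\shskip, \psi)$ as the theta correspondence for the reductive dual pair $(\widetilde{\SL}_2, \mathrm{O}(V))$, where $V$ ranges over the two three-dimensional quadratic spaces of trivial discriminant: the split space, whose special orthogonal group is $\PGL_2$, and the anisotropic one, attached to quaternion algebras, which via Jacquet--Langlands is responsible for the quaternionic dichotomy at the places of $\varSigma$. The global Weil representation factors as $\omega_\psi = \otimes_\varv \omega_{\psi_\varv}$, and the theta lift is constructed by pairing the $\omega_\psi$-theta kernel against a cusp form on one factor and integrating over the other.

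First, (1) follows from the factorization of $\omega_\psi$ together with the local Howe duality for the low-rank dual pairs $(\widetilde{\SL}_2(F_\varv), \mathrm{O}(V_\varv))$, combined with strong multiplicity one: the global lift, when nonzero and cuspidal, is forced to be the restricted tensor product of its local counterparts. For (2), unfold the global theta integral: the $\psi$-th Fourier coefficient of $\Theta(\widetilde\pi,\psi)$ factors through the $\psi$-Whittaker functional on $\widetilde\pi$, which yields the second equivalence directly. The first equivalence $\Theta(\pi,\psi)\neq 0 \iff L(\pi,1/2)\neq 0$ is the Rallis inner product formula: the Petersson norm of $\Theta(\pi,\psi)$ equals, up to a product of everywhere nonvanishing local factors, the central value $L(\pi, 1/2)$.

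For (3)--(5), combine the above with the local dichotomy of Theorem \ref{thm: Waldspurger-Local}. For (3), pick $D\in F^\times$ with $\Theta(\widetilde\pi,\psi^D)\neq 0$ (which exists by the definition of $\widetilde A_{0\shskip 0}$) and set $\pi := \Theta(\widetilde\pi,\psi^D)\otimes \chiup_D$; independence of $D$ follows from strong multiplicity one on $\PGL_2$, since at every place $\varv\notin\varSigma$ part (1) of Theorem \ref{thm: Waldspurger-Local} gives $\theta(\pi_\varv, \psi_\varv) = \theta(\pi_\varv\otimes\chiup_{D}, \psi_\varv^D)$, while (2) provides bijectivity of $S_\psi$ onto $A_{0,\shskip i}$. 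For (4), Theorem \ref{thm: Waldspurger-Local}(2) labels the local theta lifts at $\varv\in\varSigma$ by signs $\epsilon_\varv = \pm 1$, and the parity condition $\prod_\varv \epsilon_\varv = \epsilon(\pi,1/2)$ arises as follows: by (2), the representation $\widetilde\pi^\epsilon$ is nonzero only when some twist has nonvanishing central $L$-value, which by the functional equation forces $\epsilon(\pi\otimes\chiup_D,1/2)=1$, and inserting this into the product identity $\epsilon(\pi\otimes\chiup_D,1/2) = \epsilon(\pi,1/2)\prod_{\varv\shskip\in\varSigma}\epsilon(D,\pi_\varv)$ displayed just above the theorem yields the stated constraint. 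Finally (5) is the synthesis of (1), (2), (4), and Theorem \ref{thm: Waldspurger-Local}(2), via the identification $\Theta(\pi\otimes\chiup_D,\psi^D) \cong \otimes_\varv \theta(\pi_\varv\otimes\chiup_D, \psi_\varv^D)$.

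The main obstacle is the quaternionic dichotomy at places of $\varSigma$, which enforces the parity $\prod_\varv \epsilon_\varv = \epsilon(\pi,1/2)$. This requires tracking theta lifts between the two inner forms of $\mathrm{O}_3$ through Jacquet--Langlands and matching local root numbers to the global sign appearing in the Rallis inner product formula. A secondary technicality is preserving cuspidality of $\Theta(\pi,\psi)$ throughout the entire near equivalence class; one must rule out contamination by theta series attached to one-dimensional quadratic forms, which is precisely the orthogonality condition built into the definition of $\widetilde A_{0\shskip 0}$.
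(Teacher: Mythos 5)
The paper offers no proof of this theorem: it is stated purely as a recollection of Waldspurger's results and is used as a black box, with the argument residing entirely in \cite{Waldspurger-Shimura} and \cite{Waldspurger-Shimura3}. So the only meaningful comparison is between your sketch and Waldspurger's original proof. Your treatment of (1)--(3) and of (5) follows the standard route (factorization of the Weil representation, local Howe duality, unfolding of the $\psi^D$-Fourier coefficient of the lift, a Rallis-type inner product formula, strong multiplicity one on $\PGL_2$), and is a fair outline of that route, modulo the slip that over a number field there are infinitely many ternary quadratic spaces of trivial discriminant---one for each quaternion algebra---rather than two.

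The genuine gap is in part (4). Your functional-equation argument establishes only the \emph{necessity} of the parity condition: any $\epsilon$ realized by a nonzero cuspidal member of the near equivalence class must satisfy $\prod_{\varv\shskip\in\varSigma}\epsilon_{\varv}=\epsilon(\pi,1/2)$, because a nonvanishing central value forces $\epsilon(\pi\otimes\chiup_D,1/2)=1$. But the theorem asserts a \emph{bijection}, i.e.\ that every such $\epsilon$ is realized. This existence statement cannot be extracted from the split-space theta lift alone: $\Theta(\pi\otimes\chiup_D,\psi^D)$ is nonzero only when $L(\pi\otimes\chiup_D,1/2)\neq 0$, and the nonvanishing of some twisted central value with $D$ confined to a prescribed class $F^{\epsilon}(\pi)$ is neither established in your sketch nor elementary. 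Waldspurger's actual construction of the remaining packet members proceeds through theta lifts from $B^{\times}$ modulo center for the various quaternion algebras $B$, where nonvanishing is governed by toric periods rather than by $L$-values, combined with the Jacquet--Langlands correspondence and his period formula; your closing paragraph gestures at ``tracking theta lifts between inner forms'' but supplies no mechanism. A secondary omission in (4)--(5) is the rigidity statement implicit in the bijection, namely that two cuspidal members of the class with identical local components at all $\varv\in\varSigma$ coincide. As written, the proposal covers (1)--(3), (5), and only half of (4).
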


	\section{The Formula of Waldspurger}\label{sec: Waldspurger formula}
	
	
	We are now ready to state our main theorems. Their proofs will be provided in \S \ref{sec: proofs} after the relative trace formula and local Bessel identities are introduced.
	
	\subsection{The basic Waldspurger's formula}
	

\begin{thm}\label{thm:Waldspurger-Basic}
Let $\pi$ be an irreducible cuspidal automorphic representation of $\GL_2(\BA)$, trivial on the center, with $L(\pi, 1/2)\neq 0$. Let $D \in F^{\times} $. Set $\widetilde \pi_D=\Theta(\pi , \psi^D)$. Suppose that $S$ is a finite set of places of $F$ which contains all bad places, all places $\varv$ where $\psi_{\varv}$ or $\psi_{\varv}^D$ is ramified, along with all places where $\pi_\varv$ or $\widetilde \pi_{D, \shskip \varv}$ is not unramified. We have
 \begin{align}\label{4eq: basis Waldspurger}
 \left|d_\pi(S,\psi)\right|^2L^S(\pi, 1/2)= |d_{\shskip\widetilde \pi_D} (S,\psi^D)  |^2.
 \end{align}
Recall that $L^S(\pi,s)$ is the partial $L$-function $\prod_{\varv \shskip \notin S}L(\pi_\varv, s)$ and that $d_\pi(S,\psi)$ and $d_{\shskip\widetilde \pi_D} (S,\psi^D)$ are the two constants defined as in {\rm\S  \ref{sec: Fourier coefficients, d}}.
\end{thm}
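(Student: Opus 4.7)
The plan is to follow Baruch--Mao \cite{BaruchMao-Global}, whose global argument transfers verbatim to arbitrary number fields once the missing ingredient---the local Bessel identity at complex places, now established in \cite{Chai-Qi-Bessel}---is available. The engine is Jacquet's relative trace formula, which equates a global distribution $I_\pi$ attached to $\pi$ on $\GL_2(\BA)$ with a global distribution $J_{\widetilde \pi}$ attached to $\widetilde\pi$ on $\widetilde{\SL}_2(\BA)$ whenever $\widetilde\pi = \Theta(\pi, \psi^D)$.

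First, I would recall from \S \ref{sec: Jacquet-RTF} how $I_\pi$ and $J_{\widetilde \pi}$ factor through the normalized Whittaker periods $|W_\varphi(1)|^2/\|\varphi\|^2$ and $|\widetilde W_{\widetilde \varphi}^D(1)|^2/\|\widetilde \varphi\|^2$, each multiplied by a product over places of local Bessel distributions $I_{\pi_\varv}$ (on $\GL_2(F_\varv)$, carrying the local $L$-factor $L(\pi_\varv, 1/2)$ at unramified places via Hecke unfolding) and $J_{\widetilde \pi_\varv}$ (on $\widetilde{\SL}_2(F_\varv)$). The hypothesis $L(\pi, 1/2) \neq 0$ ensures via Theorem \ref{thm:Waldspurger-Global}(2) that $\widetilde \pi_D = \Theta(\pi, \psi^D)$ is nonzero, so both sides of the spectral identity $I_\pi = J_{\widetilde \pi_D}$ extracted from the RTF are genuinely nontrivial.

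Second, I would invoke the local Bessel identities from \cite{BaruchMao-NA, BaruchMao-Real, Chai-Qi-Bessel} at each $\varv \in S$ to match $I_{\pi_\varv}$ against $J_{\widetilde \pi_{D, \shskip \varv}}$, and the spherical Bessel identity at each unramified $\varv \notin S$ to perform the same matching up to the local spherical $L$-factor. The contributions from $\varv \notin S$ collectively produce precisely $L^S(\pi, 1/2)$ in the resulting global identity. Rewriting what remains in terms of the definitions \eqref{2eq: explicit d} and \eqref{2eq: explicit d tilde} of $d_\pi(S,\psi)$ and $d_{\widetilde \pi_D}(S,\psi^D)$ produces \eqref{4eq: basis Waldspurger}.

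Historically, the principal obstacle was the local Bessel identity at complex places, whose proof in \cite{Chai-Qi-Bessel} relied on the complex Weber--Hardy exponential integral formulas from \cite{Qi-Sph, Qi-II-G}. With that obstacle removed, the remaining difficulty is careful bookkeeping: tracking the Weil factors $\gamma(a, \psi_\varv^D)$ that enter the Hermitian form on $V_{\widetilde \pi_\varv}$, keeping control of the normalizations of Whittaker functionals and Haar measures on $\GL_2(F_\varv)$ versus $\widetilde{\SL}_2(F_\varv)$, and ensuring that all local ambiguities collapse precisely into the factors $e(\varphi_\varv, \psi_\varv)$ and $e(\widetilde \varphi_\varv, \psi_\varv^D)$ of \S \ref{sec: Fourier coefficients, d}. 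The independence of $d_\pi(S,\psi)$ and $d_{\widetilde \pi_D}(S,\psi^D)$ from the choice of local Whittaker functionals---recorded just after their definitions---then guarantees that the resulting identity is intrinsic to the pair $(\pi, \widetilde \pi_D)$.
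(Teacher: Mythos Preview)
Your approach is the paper's approach: Jacquet's relative trace identity $I_\pi(f,\psi)=J_{\widetilde\pi_D}(f',\psi^D)$ for matching $f,f'$ (Theorem \ref{thm:Jacquet:Main}), followed by the factorizations of Propositions \ref{prop: express of I} and \ref{prop: express of J} and the local Bessel identity (Theorem \ref{thm:Local Bessel Identity}) at each $\varv\in S$, with Hecke matching $\hat f_\varv(\pi_\varv)=\hat f'_\varv(\widetilde\pi_\varv)$ at $\varv\notin S$.

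Two points of bookkeeping need correction. First, the partial $L^S(\pi,1/2)$ does not arise from the unramified places. Proposition \ref{prop: express of I} already contains the \emph{full} $L(\pi,1/2)$, coming from the global zeta integral $Z(\varphi)$ via Hecke theory (Lemma 6.1); at $\varv\notin S$ the local contributions are simply the Hecke eigenvalues $\hat f_\varv(\pi_\varv)$ and $\hat f'_\varv(\widetilde\pi_\varv)$, which cancel exactly by Theorem \ref{thm:Jacquet:Main}(2). It is the local Bessel identity at $\varv\in S$ that introduces an extra factor $L(\pi_\varv,1/2)$ on the metaplectic side, and dividing $L(\pi,1/2)$ by $\prod_{\varv\in S}L(\pi_\varv,1/2)$ is what produces $L^S(\pi,1/2)$.

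Second, and more substantively, Theorem \ref{thm:Local Bessel Identity} reads
\[
J_{\widetilde\pi_\varv}(f'_\varv,\psi_\varv^D)=|2D|_\varv\,\epsilon(\pi_\varv,1/2)\,L(\pi_\varv,1/2)\,I_{\pi_\varv}(f_\varv,\psi_\varv),
\]
so after matching you are left with $\prod_{\varv\in S}|2D|_\varv\,\epsilon(\pi_\varv,1/2)$ to dispose of. You have not addressed this. The factors $|2D|_\varv$ vanish by the product formula, since $|2D|_\varv=1$ for $\varv\notin S$ by the hypotheses on $S$. For the $\epsilon$-factors you need the observation that $L(\pi,1/2)\neq 0$ forces $\epsilon(\pi,1/2)=1$ via the functional equation, and since $\epsilon(\pi_\varv,1/2)=1$ at unramified $\varv\notin S$, one concludes $\prod_{\varv\in S}\epsilon(\pi_\varv,1/2)=1$. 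This step, though short, is essential and should be stated explicitly.
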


In view of \eqref{2eq: explicit d} and \eqref{2eq: explicit d tilde}, we may reformulate \eqref{4eq: basis Waldspurger} more explicitly as follows.

\begin{cor}
	Let $\varphi \in V_{\pi}^S$ and $\widetilde\varphi \in V_{\widetilde \pi_D}^S$ be pure tensors such that $L_{\varv} (\varphi_{ \varv}) \neq 0$ and $\widetilde L_{\varv} (\widetilde\varphi_{ \varv}) \neq 0 $ for all $\varv \in S$.  Let the local constants $e ( \varphi_{ \varv}, \psi_{\varv})$ and $e (\widetilde\varphi_{ \varv}, \psi_{\varv}^D)$ be defined as in \eqref{2eq: local e(phi)} and \eqref{2eq: local e(tilde phi)} respectively. Then
\begin{align}
\frac{|\widetilde W_{\widetilde \varphi}^D(1)|^2}{\|\widetilde \varphi\|^2}=\frac{|W_\varphi(1)|^2L(\pi, 1/2)}{\|\varphi\|^2}\prod_{\varv\shskip\in S}\frac{e(\varphi_\varv, \psi_\varv)}{e(\widetilde \varphi_\varv, \psi_\varv^D)L(\pi_\varv, 1/2)}.
\end{align}
\end{cor}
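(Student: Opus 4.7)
The plan is to verify that the corollary is a direct unpacking of Theorem \ref{thm:Waldspurger-Basic} using the two explicit expressions \eqref{2eq: explicit d} and \eqref{2eq: explicit d tilde} for the constants $d_\pi(S,\psi)$ and $d_{\shskip\widetilde \pi_D}(S,\psi^D)$. The hypotheses $L_\varv(\varphi_\varv)\neq 0$ and $\widetilde L_\varv(\widetilde\varphi_\varv)\neq 0$ for $\varv\in S$ are exactly what is needed to apply \eqref{2eq: explicit d} and \eqref{2eq: explicit d tilde}, and so the two quantities $d_\pi(S,\psi)$, $d_{\shskip\widetilde \pi_D}(S,\psi^D)$ can be evaluated in terms of the chosen pure tensors $\varphi$, $\widetilde\varphi$.

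First, I would square \eqref{2eq: explicit d} and \eqref{2eq: explicit d tilde} and immediately recognize the local factors as
\[
\frac{\|\varphi_\varv\|^2}{|L_\varv(\varphi_\varv)|^2} = e(\varphi_\varv,\psi_\varv), \qquad \frac{\|\widetilde\varphi_\varv\|^2}{|\widetilde L_\varv^{D}(\widetilde\varphi_\varv)|^2} = e(\widetilde\varphi_\varv,\psi_\varv^D),
\]
in accordance with \eqref{2eq: local e(phi)} and \eqref{2eq: local e(tilde phi)}. Substituting the resulting expressions into the identity \eqref{4eq: basis Waldspurger} of Theorem \ref{thm:Waldspurger-Basic} yields
\[
\frac{|W_\varphi(1)|^2}{\|\varphi\|^2}\,L^S(\pi,1/2)\prod_{\varv\shskip\in S} e(\varphi_\varv,\psi_\varv) \;=\; \frac{|\widetilde W_{\widetilde\varphi}^{D}(1)|^2}{\|\widetilde\varphi\|^2}\prod_{\varv\shskip\in S} e(\widetilde\varphi_\varv,\psi_\varv^D).
\]

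Second, I would convert the partial $L$-function into the complete one via $L^S(\pi,1/2)=L(\pi,1/2)\big/\prod_{\varv\shskip\in S}L(\pi_\varv,1/2)$, and solve for $|\widetilde W_{\widetilde\varphi}^{D}(1)|^2/\|\widetilde\varphi\|^2$. This produces precisely the asserted identity. There is no genuine obstacle here: the corollary is bookkeeping that records what Theorem \ref{thm:Waldspurger-Basic} says at the level of an explicitly chosen factorizable vector. The only point to mind is a notational one, namely that the $\widetilde L_\varv$ appearing in \eqref{2eq: explicit d tilde} should be read as $\widetilde L_\varv^{D}$ when the character $\psi^D$ is in force, per the convention fixed at the end of Section \ref{sec: Fourier coefficients, d}.
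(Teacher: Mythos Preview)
Your proposal is correct and follows exactly the paper's approach: the corollary is obtained by substituting the explicit expressions \eqref{2eq: explicit d} and \eqref{2eq: explicit d tilde} into \eqref{4eq: basis Waldspurger} and rewriting $L^S(\pi,1/2)$ as $L(\pi,1/2)/\prod_{\varv\in S}L(\pi_\varv,1/2)$. The paper itself offers no further argument beyond the sentence ``In view of \eqref{2eq: explicit d} and \eqref{2eq: explicit d tilde}, we may reformulate \eqref{4eq: basis Waldspurger} more explicitly as follows,'' so your write-up is, if anything, more detailed than the original.
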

	
\subsection{The Waldspurger formula for twisted central $L$-values}\label{sec: Waldspurger, twisted}

Applying Theorem \ref{thm:Waldspurger-Basic} to $\pi\otimes \chiup_D$, we get the Waldspurger formula for $L(\pi \otimes \chiup_D, 1/2)$.

For $D\in F^\times$, we say, with slight abuse of terminology, that $D$ is a {\it square-free} integer  if $|4|_{\varv}  q_{\varv} \- \leqslant |D|_\varv \leqslant 1$ for all non-Archimedean places $\varv$; clearly, one has $|D|_\varv=1$ or $ q_\varv^{-1}$ if the place $\varv$ is \textit{odd}.
Note that the discriminant of any quadratic extension of $F$ is a square-free integer.

According to Theorem \ref{thm:Waldspurger-Global}, there is a bijection $S_{\psi}$ between $\widebar A_{0\shskip 0}$ and $A_{0,\shskip i}$. 
For $\pi\in A_{0,\shskip i}$ let $\big\{\widetilde \pi^\epsilon : \prod_{\varv\shskip\in \varSigma}\epsilon_\varv= \epsilon(\pi, 1/2) \big\}$ be the packet $S^{-1}_{\psi} (\pi)$ of representations in $\widetilde A_{0\shskip 0}$ corresponding to $\pi$. 
Here $\epsilon = (\epsilon_\varv)_{\varv \shskip \in \varSigma} \in \{\pm 1 \}^{|\varSigma|}$, with $\varSigma=\varSigma(\pi)$ defined as in \S \ref{sec: golbal Waldspurger}. 
For $D\in F^\epsilon(\pi)$, we have $\Theta(\pi\otimes \chiup_D, \psi^D)=\widetilde \pi^\epsilon$ or $0$ according as $L(\pi\otimes \chiup_D,1/2)\neq 0$ or not.

\begin{thm}\label{thm:Waldspurger-Twist}
Let $\pi$ and $\widetilde \pi^{\epsilon}$ be as above. Let $S$ be a finite set of places containing all bad places along with the places $\varv$ where $\pi_\varv$ or $\psi_\varv$ is not unramified. Then for any square-free integer $D\in F^\times$, if $D\in F^{\epsilon_{\oldstylenums{0}}}(\pi)$, then $d_{\shskip\widetilde \pi^\epsilon}(S, \psi^D)=0$ whenever $\epsilon\neq \epsilon_{\oldstylenums{0}}$, but, for this $\epsilon_{\oldstylenums{0}}$, we have
\begin{align}\label{4eq: Waldspurger twisted}
|d_{\shskip\widetilde \pi^{\shskip\epsilon_{\oldstylenums{0}}}}(S, \psi^D) |^2=|d_\pi(S, \psi)|^2 L^S(\pi\otimes \chiup_D, 1/2) / |D|_S. 
\end{align}
More explicitly, for pure tensors $\varphi=\otimes_{\varv} \, \varphi_\varv\in V_\pi^S$ and $\widetilde \varphi=\otimes_{\varv} \, \widetilde \varphi_\varv\in V_{\widetilde \pi^{\shskip\epsilon_{\oldstylenums{0}}}}^S$, with $\varphi_\varv=\varphi_{\ro,\shskip \varv}$ and $\widetilde \varphi_{\varv}=\widetilde \varphi_{\ro,\shskip \varv}$ for all $\varv\notin S$, we have
\begin{align}
\frac{ |\widetilde W_{\widetilde \varphi}^D(1) |^2}{\|\widetilde \varphi\|^2}=\frac{|W_\varphi(1)|^2L(\pi\otimes \chiup_D, 1/2)}{\|\varphi\|^2}\prod_{\varv\shskip\in S}\frac{e(\varphi_\varv, \psi_\varv)}{e(\widetilde \varphi_\varv, \psi_\varv^D)L(\pi_\varv\otimes \chiup_D, 1/2)|D|_\varv}.
\end{align}
\end{thm}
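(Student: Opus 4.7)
My plan is to deduce Theorem \ref{thm:Waldspurger-Twist} from the basic Waldspurger formula (Theorem \ref{thm:Waldspurger-Basic}) applied to the twisted representation $\pi\otimes \chiup_D$, combined with Waldspurger's dichotomy in Theorems \ref{thm: Waldspurger-Local} and \ref{thm:Waldspurger-Global}. The vanishing of $d_{\widetilde\pi^\epsilon}(S,\psi^D)$ for $\epsilon\neq \epsilon_{\oldstylenums{0}}$ will fall out of the dichotomy. For the main identity with $\epsilon=\epsilon_{\oldstylenums{0}}$, the set $S$ is generally not large enough to apply Theorem \ref{thm:Waldspurger-Basic} directly to the twist (since $\pi_\varv\otimes\chiup_{D,\varv}$ is ramified at places $\varv\notin S$ dividing $D$), so I will enlarge $S$ to $S'$, invoke the basic formula there, and descend back to $S$ by a careful comparison of local data.

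First I would settle the vanishing. For $\epsilon\neq \epsilon_{\oldstylenums{0}}$, Theorem \ref{thm:Waldspurger-Global}(1) and (5) tell us that $\Theta(\pi\otimes \chiup_D,\psi^D)$ lies in $\{\widetilde\pi^{\epsilon_{\oldstylenums{0}}},0\}$ and factors as $\otimes_\varv\theta(\pi_\varv\otimes \chiup_{D,\varv},\psi_\varv^D)$, with local factor at $\varv\in \varSigma$ equal to $\widetilde\pi_\varv^{\epsilon(D,\pi_\varv)}$ by Theorem \ref{thm: Waldspurger-Local}(2). Since $\epsilon\neq \epsilon_{\oldstylenums{0}}$, there exists $\varv\in \varSigma$ with $\epsilon_\varv\neq \epsilon(D,\pi_\varv)$; by Theorem \ref{thm: Waldspurger-Local}(3), $\widetilde\pi^\epsilon_\varv$ admits no nontrivial $\psi^D_\varv$-Whittaker model. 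Hence $\widetilde\pi^\epsilon$ has no nontrivial global $\psi^D$-Whittaker model, and $d_{\widetilde\pi^\epsilon}(S,\psi^D)=0$ by the convention of \S \ref{sec: Fourier coefficients, d}.

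For the case $\epsilon=\epsilon_{\oldstylenums{0}}$, I would set $S'=S\cup\{\varv\notin S:|D|_\varv<1\}$. By the square-freeness of $D$, the set $S'\setminus S$ consists of the odd non-Archimedean places at which $D$ has valuation one, so $\chiup_{D,\varv}$ is ramified quadratic and $\pi_\varv\otimes\chiup_{D,\varv}$ is a ramified principal series. Thus $S'$ meets the hypotheses of Theorem \ref{thm:Waldspurger-Basic} applied to $\pi\otimes\chiup_D$ and $\psi^D$, and together with $\Theta(\pi\otimes\chiup_D,\psi^D)=\widetilde\pi^{\epsilon_{\oldstylenums{0}}}$ from Theorem \ref{thm:Waldspurger-Global}(5) we obtain
\begin{align*}
|d_{\pi\otimes\chiup_D}(S',\psi)|^2\, L^{S'}(\pi\otimes\chiup_D,1/2)=|d_{\widetilde\pi^{\epsilon_{\oldstylenums{0}}}}(S',\psi^D)|^2.
\end{align*}
The values $L(\pi_\varv\otimes\chiup_{D,\varv},1/2)=1$ at $\varv\in S'\setminus S$ (ramified quadratic twist of unramified principal series) give $L^{S'}=L^S$. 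For the $d$-constants, I would take a pure tensor $\varphi\in V_\pi^S$ and set $\varphi':=\varphi\cdot(\chiup_D\circ\det)\in V_{\pi\otimes\chiup_D}^{S'}$ with $\varphi'_\varv=\varphi_{\ro,\shskip\varv}$ for $\varv\in S'\setminus S$; the identities $W_{\varphi'}^\psi(1)=W_\varphi^\psi(1)$, $\|\varphi'\|=\|\varphi\|$, and the coincidence of the local $\psi_\varv$-Whittaker functional and Hermitian form on $\pi_\varv\otimes\chiup_{D,\varv}$ with those on $\pi_\varv$ (since $\chiup_{D,\varv}(\det n(x))=1$ and $|\chiup_{D,\varv}|\equiv 1$) then yield, via \eqref{2eq: explicit d}, $|d_{\pi\otimes\chiup_D}(S',\psi)|^2=|d_\pi(S,\psi)|^2\prod_{\varv\in S'\setminus S}e(\varphi_{\ro,\shskip\varv},\psi_\varv)$, and an analogous comparison using \eqref{2eq: explicit d tilde} gives $|d_{\widetilde\pi^{\epsilon_{\oldstylenums{0}}}}(S',\psi^D)|^2=|d_{\widetilde\pi^{\epsilon_{\oldstylenums{0}}}}(S,\psi^D)|^2\prod_{\varv\in S'\setminus S}e(\widetilde\varphi_{\ro,\shskip\varv},\psi^D_\varv)$.

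Everything then reduces to the purely local identity
\begin{align*}
\frac{e(\widetilde\varphi_{\ro,\shskip\varv},\psi^D_\varv)}{e(\varphi_{\ro,\shskip\varv},\psi_\varv)}=|D|_\varv^{-1}=q_\varv,\qquad \varv\in S'\setminus S,
\end{align*}
after which the product formula $\prod_\varv|D|_\varv=1$, together with $|D|_\varv=1$ for $\varv\notin S'$, yields $\prod_{\varv\in S'\setminus S}|D|_\varv^{-1}=|D|_S^{-1}$ and completes \eqref{4eq: Waldspurger twisted}. The hard part will be this local calculation at an odd place $\varv\in S'\setminus S$, where $\pi_\varv$ and $\widetilde\pi^{\epsilon_{\oldstylenums{0}}}_\varv$ are both unramified but $\psi^D_\varv$ has shifted conductor: the ratio $\|\widetilde\varphi_{\ro,\shskip\varv}\|_D^2/|\widetilde L^D_\varv(\widetilde\varphi_{\ro,\shskip\varv})|^2$ on the metaplectic side must be computed and compared to $\|\varphi_{\ro,\shskip\varv}\|^2$ on the $\GL_2$-side, via the local Shimura correspondence linking $\widetilde\pi^{\epsilon_{\oldstylenums{0}}}_\varv$ to $\pi_\varv$ and the explicit Casselman--Shalika-type formula for the spherical Whittaker function on $\widetilde{\SL}_2$ with respect to $\psi^D_\varv$, so as to capture the factor $|D|_\varv^{-1}=q_\varv$ that reflects the shift in conductor of $\psi^D_\varv$. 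Once this local identity is in hand, the explicit form in the second display of Theorem \ref{thm:Waldspurger-Twist} follows by substituting \eqref{2eq: explicit d} and \eqref{2eq: explicit d tilde} into \eqref{4eq: Waldspurger twisted} and writing $L(\pi\otimes\chiup_D,1/2)=L^S(\pi\otimes\chiup_D,1/2)\prod_{\varv\in S}L(\pi_\varv\otimes\chiup_{D,\varv},1/2)$.
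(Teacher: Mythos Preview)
Your approach is essentially the same as the paper's: enlarge $S$ to include the places dividing $D$, apply the basic Waldspurger formula (Theorem \ref{thm:Waldspurger-Basic}) to $\pi\otimes\chiup_D$ at the enlarged set, identify $d_{\pi\otimes\chiup_D}$ with $d_\pi$ via the twist, and descend back to $S$ via a local unramified computation. The paper calls the enlarged set $S_1=S\cup S_D$ and cites \cite[Lemma 7.1]{BaruchMao-Global} for the twist identity and \cite[Proposition 8.1, 8.2]{BaruchMao-Global} for the local ratio
\[
\frac{e(\widetilde\varphi_{\ro,\shskip\varv},\psi_\varv^D)}{e(\varphi_{\ro,\shskip\varv},\psi_\varv)}=\frac{1}{|D|_\varv\,L(\pi_\varv\otimes\chiup_D,1/2)};
\]
your version just folds in the observation $L(\pi_\varv\otimes\chiup_D,1/2)=1$ for $\varv\in S'\smallsetminus S$ at the outset. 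So the ``hard local part'' you anticipate is exactly what these cited propositions provide, and you need not rederive it.

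There is one genuine omission. You invoke Theorem \ref{thm:Waldspurger-Basic} for $\pi\otimes\chiup_D$ and assert $\Theta(\pi\otimes\chiup_D,\psi^D)=\widetilde\pi^{\epsilon_{\oldstylenums 0}}$, but Theorem \ref{thm:Waldspurger-Global}(5) only gives this when $L(\pi\otimes\chiup_D,1/2)\neq 0$; otherwise $\Theta(\pi\otimes\chiup_D,\psi^D)=0$ and, more to the point, Theorem \ref{thm:Waldspurger-Basic} carries the hypothesis $L(\pi,1/2)\neq 0$ and so cannot be applied. The paper treats this case separately: if $L(\pi\otimes\chiup_D,1/2)=0$ then by Theorem \ref{thm:Waldspurger-Global}(2,3) the representation $\widetilde\pi^{\epsilon_{\oldstylenums 0}}$ has no nontrivial $\psi^D$-Whittaker model (else $\Theta(\widetilde\pi^{\epsilon_{\oldstylenums 0}},\psi^D)\otimes\chiup_D=\pi$ would force $\Theta(\pi\otimes\chiup_D,\psi^D)=\widetilde\pi^{\epsilon_{\oldstylenums 0}}\neq 0$), so $d_{\shskip\widetilde\pi^{\shskip\epsilon_{\oldstylenums 0}}}(S,\psi^D)=0$ and \eqref{4eq: Waldspurger twisted} holds trivially. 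You should insert this case distinction before applying the basic formula.
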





\section{Review of a Relative Trace Formula of Jacquet}\label{sec: Jacquet-RTF}

In this section, we give a brief review of the relative trace formula of Jacquet in \cite{Jacquet-RTF}. Note that he acknowledges the inspiration from Iwaniec's work \cite{Iwaniec-Z/2,Iwaniec-Wald}.

\subsection{Definition of the global distributions $I(f, \psi)$ and $J (f', \psi^D)$}

Let $C_c^{\infty} (Z(\BA) \backslash G(\BA))$ denote the space of smooth compactly supported functions on $Z(\BA) \backslash G(\BA)$. For $f \in C_c^{\infty} (Z(\BA) \backslash G(\BA))$ define a kernel function
\begin{align*}
	K (x, y; f) = \sum_{ \xi \shskip\in Z(F) \backslash G(F)} f (x\- \xi y).
\end{align*}
Define the distribution $I(f, \psi)$ to be
\begin{align}\label{5eq: defn I(f)}
	I(f, \psi) = \int_{\BA^{\times}/F^{\times} } \int_{\BA/F} K (t(a), n(x); f) \psi (x) d x \shskip \dx a.
\end{align}
Let $C_c^{\infty} (\widetilde{S} (\BA))$ denote the space of  {\it genuine} smooth compactly supported functions on $\widetilde{S} (\BA)$. For $f' \in C_c^{\infty} (\widetilde{S} (\BA))$ define
\begin{align*}
	K (x, y; f') = \sum_{ \xi \shskip\in S(F)  } f' (x\- \xi y).
\end{align*}
Define the distribution $J(f', \psi^D)$ to be
\begin{align}\label{5eq: defn J(f')}
	J(f', \psi^D) = \int_{\BA/F} \int_{\BA/F} K  (\widetilde n(x), \widetilde n(y); f') \psi^D (-x + y) d x \shskip d y.
\end{align}

The relative trace formula is an identity between the distributions $I(f; \psi)$ and $J (f'; \psi^D)$.

\subsection{Comparison of orbital integrals}
The relative trace formula identity follows from comparing the orbital integrals on the geometric side. See \cite[\S 3.1, 4, 6.1, 6.2, 7]{Jacquet-RTF}.

\begin{prop}[Jacquet]\label{prop: Jacquet 1} Suppose that $f $ and $f'$ are products
	of local functions $f_{\varv} \in C_c^{\infty} (Z(F_{\varv} ) \backslash G(F_{\varv} ) )$ and $f'_{\varv} \in C_c^{\infty} (\widetilde{S} (F_{\varv}))$, that is, $f = \otimes_{\varv} \, f_{\varv} $ and $f' = \otimes_{\varv} \, f_{\varv}' $. We have
	\begin{align*}
		I(f, \psi) &= \prod S_{\psi_{\varv}}^{\hskip -1 pt +} (f_{\varv}) + \prod S_{\psi_{\varv}}^{\hskip -1 pt -} (f_{\varv}) + \sum_{ \xi \shskip \in F^{\times} } \prod O_{\shskip\psi_{\varv}} (n(\xi) \varw; f_{\varv}), \\
		J(f', \psi^D) &= \prod S_{\psi^D_{\varv}}^{\hskip -1 pt +} (f_{\varv}') + \prod S_{\psi^D_{\varv}}^{\hskip -1 pt -} (f_{\varv}') + \sum_{ \xi \shskip \in F^{\times} } \prod O_{\shskip\psi^D_{\varv}} (  \widetilde \varw \widetilde s (\xi); f_{\varv}'),
	\end{align*}
	in which
	\begin{align*}
		O_{\shskip\psi_{\varv}} (g; f_{\varv}) & = \int_{F_{\varv}^{\times} } \int_{ F_{\varv} } f_{\varv} (t(a) \shskip g \shskip n (x) )\psi_{\varv} (x) dx \shskip \dx a, \\
		O_{\shskip\psi^D_{\varv}} (  g ; f_{\varv}') & = \int_{ F_{\varv} }   \int_{ F_{\varv} } f_{\varv}' (\widetilde n (x)\shskip g \shskip\widetilde n (y) ) \psi_{\varv}^D (x+y) dx \shskip dy,
	\end{align*}
	while $S_{\psi_{\varv}}^{\hskip -1 pt \pm} (f_{\varv})$ and $S_{\psi^D_{\varv}}^{\hskip -1 pt \pm} (f_{\varv}')$ are certain singular orbital integrals which are determined by the asymptotics of $O_{\shskip\psi_{\varv}} (n(a) \varw; f_{\varv})$ and  $O_{\shskip\psi^D_{\varv}} (  \widetilde \varw \widetilde s (a); f_{\varv}')$ at the boundary $a = 0$ respectively.
\end{prop}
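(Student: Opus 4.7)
The plan is to prove both identities by the same three-step strategy: orbit decomposition of the automorphic kernel sum, local factorization on generic orbits, and a regularization argument on the singular orbits. I will sketch the $I(f,\psi)$ side; the $J(f',\psi^D)$ side is strictly parallel once Kubota's cocycle and the genuineness of $f'$ are accounted for.

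\textbf{Orbit decomposition.} Inserting the definition of $K(t(a), n(x); f)$ into \eqref{5eq: defn I(f)} and interchanging sum and integration, I would partition $Z(F) \backslash G(F)$ into orbits under $A(F) \times N(F)$ acting by $\xi \mapsto t(a)\- \xi \shskip n(x)$. The Bruhat decomposition $G = B \sqcup B\varw N$ then yields two types of orbits: a single \emph{boundary} orbit coming from $Z \backslash B$ (matrices with vanishing lower-left entry), and a family of \emph{generic} orbits in the large cell, parameterized by $\xi \in F^{\times}$ with representatives $n(\xi)\varw$. A direct matrix computation shows that the stabilizer of each $n(\xi)\varw$ in $A(F) \times N(F)$ is trivial, so the generic orbits unfold cleanly, whereas the boundary orbit is the source of the singular contributions $S_{\psi_\varv}^{\pm}$.

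\textbf{Generic orbits and factorization.} For the orbit through $n(\xi)\varw$, triviality of the stabilizer lets the quotient integrals $\int_{\BA^{\times}/F^{\times}} \int_{\BA/F}$ unfold to $\int_{\BA^{\times}} \int_{\BA}$, so that the orbit's contribution is $\int \int f(t(a)\- n(\xi)\varw \shskip n(x)) \psi(x) \shskip d x \shskip \dx a$. Because $f = \otimes_{\varv} f_\varv$ and $\psi = \otimes_{\varv} \psi_\varv$ are pure tensors, this integral splits as the Euler product $\prod_{\varv} O_{\shskip\psi_\varv}(n(\xi)\varw; f_\varv)$, yielding the sum $\sum_{\xi \in F^{\times}}$ on the right. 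Convergence of the outer sum and the Euler product follows from the compact support of $f$ and the fact that at almost all non-Archimedean places the standard test function makes the local orbital integral equal $1$.

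\textbf{Singular orbits and the main obstacle.} The boundary contribution is formally divergent: carrying out the unfolding on the $Z \backslash B$ orbit produces a factor of the type $\sum_{y \in F} \psi(-y)$, which is not convergent. Extracting the two finite terms $S_{\psi_\varv}^{\pm}$ is the principal technical difficulty. Following Jacquet, building on Iwaniec \cite{Iwaniec-Z/2, Iwaniec-Wald}, the remedy is to approach the boundary as the $a \to 0$ limit of the generic orbits and read off the coefficients of the two independent leading homogeneity types in the asymptotic expansion of $O_{\shskip\psi_\varv}(n(a)\varw; f_\varv)$; these coefficients \emph{define} $S_{\psi_\varv}^{\pm}(f_\varv)$, and a global Poisson-summation/residue argument on the Tate-type integrals that arise identifies the regularized boundary contribution with $\prod_\varv S_{\psi_\varv}^{+}(f_\varv) + \prod_\varv S_{\psi_\varv}^{-}(f_\varv)$. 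The $J$-side analysis follows the same template with $N(F) \times N(F)$ acting on $S(F)$, generic representatives $\widetilde \varw \shskip \widetilde s(\xi)$, and asymptotics of $O_{\shskip\psi^D_\varv}(\widetilde \varw \shskip \widetilde s(a); f'_\varv)$ at $a = 0$; the extra bookkeeping here is that the Weil factors coming from Kubota's cocycle must propagate consistently through the asymptotic expansion so that the two singular contributions again assemble into honest Euler products $\prod_\varv S_{\psi^D_\varv}^{\pm}(f'_\varv)$.
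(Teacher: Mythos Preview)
The paper does not supply its own proof of this proposition; it is stated as a result of Jacquet with the citation ``See \cite[\S 3.1, 4, 6.1, 6.2, 7]{Jacquet-RTF}'' and nothing further. Your three-step outline (Bruhat orbit decomposition for the $A\times N$ and $N\times N$ actions, unfolding and Euler factorization on the generic orbits, and extraction of the singular terms $S^{\pm}$ from the boundary asymptotics of the regular orbital integrals) is precisely the strategy Jacquet follows in the cited sections, so your proposal is consistent with the source the paper defers to.

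One small caution: your description of the singular contribution as obtained via ``a global Poisson-summation/residue argument on the Tate-type integrals'' is in the right spirit but somewhat schematic compared to what Jacquet actually does. In \cite{Jacquet-RTF} the definitions of $S_{\psi_\varv}^{\pm}$ are made concrete by writing down the explicit two-term asymptotic of $O_{\psi_\varv}(n(a)\varw; f_\varv)$ as $|a|_\varv \to 0$ (and similarly on the metaplectic side), and the global identification of the boundary piece with $\prod_\varv S^{+} + \prod_\varv S^{-}$ is carried out by a direct computation rather than an abstract residue argument. If you were to fill in the sketch, that is where the genuine work lies; the generic-orbit part is, as you say, a straightforward unfolding.
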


\begin{prop}[Jacquet] \label{prop: Jacquet 2} 
	For each $f_{\varv} \in C_c^{\infty} (Z(F_{\varv})\backslash G(F_{\varv}) )$ there exists $f'_{\varv} \in C_c^{\infty} (\widetilde{S} (F_{\varv}))$ such that for $a \in F_{\varv}^{\times}$
	\begin{equation*}
		O_{\shskip\psi_{\varv}} (n(a/4D) \varw; f_{\varv}) = O_{\shskip\psi^D_{\varv}} (  \widetilde \varw \widetilde s (a); f_{\varv}') \cdot \psi_{\varv} (-2D/a) |a|_{\varv}^{1/2} / \gamma (a, \psi_{\varv}^D) ,
	\end{equation*}
	and
	\begin{align*}
		S_{\psi_{\varv}}^{\hskip -1 pt \pm} (f_{\varv} ) = S_{\psi_{\varv}^D }^{\hskip -1 pt \pm} (f_{\varv} ') \cdot (\pm 1, -1)_{\varv} \gamma (\pm 1, \psi_{\varv}^D) / \gamma (1, \psi_{\varv}^D)  .
	\end{align*}
	Conversely, given $f_{\varv}' \in C_c^{\infty} (\widetilde{S} (F_{\varv}))$ we can find  $f_{\varv} \in C_c^{\infty} (Z(F_{\varv})\backslash G(F_{\varv}) )$ satisfying the above equations. We say that the two functions $f_{\varv}$ and $f'_{\varv}$ match if the relations are satisfied.
\end{prop}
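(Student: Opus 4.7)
The plan is to match the orbital integrals pointwise in the parameter $a$ by realizing both sides explicitly as transforms of functions on $F_\varv^\times$, and then to construct $f'_\varv$ from $f_\varv$ (and vice versa) by inverting this transform. The kernel $\psi_\varv(-2D/a)|a|_\varv^{1/2}/\gamma(a,\psi_\varv^D)$ is precisely the Fourier/Weil kernel that intertwines the two orbital integrals, and its appearance is forced by the defining property of $\gamma(a,\psi_\varv^D)$ given in the Weil constant section of the Notation.

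First, I would unfold both orbital integrals using the Bruhat coordinates provided. On the $G$ side, parameterizing $g = t(a')n(a/4D)\varw \shskip n(x)$ with the measure $|a'|_\varv d^\times a'\, dx$ on the big cell modulo $Z$ turns $O_{\psi_\varv}(n(a/4D)\varw; f_\varv)$ into a function $\Phi(a)$ of $a \in F_\varv^\times$, obtained by integrating $f_\varv$ against $\psi_\varv(x)$ in the $x$-variable and then against $d^\times a'$. On the metaplectic side, parameterizing $g = \widetilde n(x)\widetilde\varw\shskip\widetilde s(a)\widetilde n(y)$ with measure $|a|_\varv^2 d^\times a\, dx\, dy$ makes $O_{\psi_\varv^D}(\widetilde\varw\shskip\widetilde s(a); f'_\varv)$ a double Fourier transform (in $x,y$, with character $\psi_\varv^D$) of the restriction of $f'_\varv$ to the big cell. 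Next, given $f_\varv$, define $f'_\varv$ on the big cell of $\widetilde S(F_\varv)$ to be the inverse double Fourier transform of $\Phi(a)\cdot\gamma(a,\psi_\varv^D)\psi_\varv(2D/a)|a|_\varv^{-1/2}$, extended by $0$ across the small cell; the reverse direction uses the fact that $|\gamma(a,\psi_\varv^D)| = 1$, so the kernel is invertible. By construction the required orbital integral identity holds for all $a \in F_\varv^\times$. The key verification is that the resulting $f'_\varv$ really lies in $C_c^\infty(\widetilde S(F_\varv))$: one identifies the image of $C_c^\infty(Z\backslash G(F_\varv))$ under $f_\varv \mapsto \Phi$ with a Schwartz-type space on $F_\varv^\times$ with controlled behavior at $0$ and $\infty$, checks that multiplication by the Weil kernel preserves this space, and then inverts the metaplectic orbital-integral map.

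Finally, the singular orbital integral matching comes from extracting the asymptotics as $a \to 0$ on both sides of the main identity. Since $S_{\psi_\varv}^\pm(f_\varv)$ and $S_{\psi_\varv^D}^\pm(f'_\varv)$ are defined as leading coefficients of $O_{\psi_\varv}(n(a)\varw; f_\varv)$ and $O_{\psi_\varv^D}(\widetilde\varw\shskip\widetilde s(a); f'_\varv)$ along the two square classes $\pm 1 \cdot F_\varv^{\times 2}$ near $a=0$, specializing the matching identity to these components and applying the Weil-constant identities $\gamma(ab^2,\psi) = \gamma(a,\psi)$ and $\gamma(a,\psi)\gamma(-a,\psi)=(a,-1)_\varv$ produces the claimed factor $(\pm 1,-1)_\varv\shskip\gamma(\pm 1,\psi_\varv^D)/\gamma(1,\psi_\varv^D)$. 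The main obstacle is precisely this $C_c^\infty$-to-$C_c^\infty$ transfer: the Weil kernel $|a|_\varv^{1/2}/\gamma(a,\psi_\varv^D)$ is not smooth at $a=0$, and one must arrange that its singular behavior is compensated exactly by the asymptotic behavior of $\Phi(a)$ at $a=0$, which is itself encoded by the two singular orbital integrals $S^\pm$. At non-Archimedean places this becomes a finite combinatorial check, but at Archimedean places --- especially over $\BC$ --- one needs delicate Schwartz-space analysis, which is the reason the complex Bessel identities from \cite{Chai-Qi-Bessel} were needed to complete Baruch-Mao's program.
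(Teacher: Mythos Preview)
The paper does not give its own proof of this proposition: it is attributed to Jacquet and is quoted from \cite{Jacquet-RTF} as part of the review in \S\ref{sec: Jacquet-RTF}. So there is no ``paper's proof'' to compare against beyond the citation.

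That said, your sketch contains a genuine conceptual confusion that should be corrected. You conclude by saying that the $C_c^\infty$-to-$C_c^\infty$ transfer at Archimedean (especially complex) places is the reason the Bessel identities of \cite{Chai-Qi-Bessel} were needed. This conflates two entirely different steps of the program. Proposition~\ref{prop: Jacquet 2} is the \emph{geometric} matching of orbital integrals of test functions; it was already established by Jacquet in \cite{Jacquet-RTF} at all local places, including $\BC$, and requires only harmonic analysis on the groups, not any representation theory. The Bessel identities of \cite{BaruchMao-NA,BaruchMao-Real,Chai-Qi-Bessel} concern the \emph{spectral} matching of the local distributions $I_{\pi_\varv}$ and $J_{\widetilde\pi_\varv}$ (Theorem~\ref{thm:Local Bessel Identity}), which is a much deeper statement and is where the complex case was genuinely missing before \cite{Chai-Qi-Bessel}. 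Nothing from \cite{Chai-Qi-Bessel} is used, or needed, for Proposition~\ref{prop: Jacquet 2}.

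There is also a technical gap in the construction you outline. Defining $f'_\varv$ on the big Bruhat cell by an inverse Fourier transform and then ``extending by $0$ across the small cell'' will not in general produce a smooth compactly supported function on $\widetilde S(F_\varv)$: smoothness across the boundary of the big cell is exactly the issue. Jacquet's argument instead characterizes the image of each orbital-integral map (as a space of functions of $a$ with prescribed asymptotics near $0$ governed by the singular terms $S^\pm$) and shows that the kernel twist carries one image onto the other; one then lifts back to a test function by surjectivity. Your asymptotic analysis of $S^\pm$ is on the right track, but it is precisely this image characterization, rather than any Bessel-function input, that handles the smoothness obstruction.
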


Moreover, Jacquet established the following result on the matching between Hecke functions on $Z(F_{\varv} ) \backslash G(F_{\varv} )$ and $\widetilde{S} (F_{\varv})$ at almost all places (\cite[\S 2, 5, 8]{Jacquet-RTF}).

\begin{prop}[Jacquet] \label{prop: Jacquet 3}
	Suppose that $F_{\varv}$ is a local non-Archimedean field, of odd residual characteristic, and that $\psi_{\varv} $ and $\psi_{\varv}^D $ are both unramified. Then there is a canonical choice of  isomorphism  between the Hecke algebras
	\begin{equation}\label{5eq: Hecke isom}
		{H} (Z(F_{\varv} ) \backslash G(F_{\varv} ) )  \xrightarrow{\hskip 3 pt\sim \hskip 3 pt } {H} (\widetilde{S} (F_{\varv}))
	\end{equation} such that Hecke
	functions $f_{\varv} $ and $f_{\varv} '$ match if they correspond under the isomorphism \eqref{5eq: Hecke isom}.
\end{prop}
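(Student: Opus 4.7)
The plan is to establish the matching via the Satake isomorphism, which identifies both Hecke algebras with Laurent polynomial rings over $\BC$ and so makes the bijection between Hecke functions transparent at the level of their spectral parameters.

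First I would set up Satake theory on both sides. The classical Satake isomorphism identifies $H(Z(\Fv)\backslash G(\Fv))$ with $\BC[X, X\-]^{W}$, where $W = \BZ/2$ acts by $X \mapsto X\-$ on Satake parameters of unramified representations of $\PGL_2(\Fv)$. For the genuine spherical Hecke algebra $H(\widetilde{S}(\Fv))$, the splitting of $\widetilde{\SL}_2(\Fv)$ over $\SL_2(O_{\varv})$ at odd residual characteristic with unramified $\psi_{\varv}$ allows one to consider bi-$\SL_2(O_{\varv})$-invariant genuine functions; a parallel Satake-type isomorphism (in the tradition of Kazhdan--Patterson and Savin) identifies this algebra with $\BC[Y, Y\-]^{W}$. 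The local unramified Shimura correspondence matches a genuine unramified representation of $\widetilde{\SL}_2(\Fv)$ with Satake parameter $Y$ to the unramified representation of $\PGL_2(\Fv)$ with parameter $X = Y^2$ (up to an innocuous twist depending on $\psi_{\varv}$), and the canonical isomorphism \eqref{5eq: Hecke isom} is by definition the one implementing this squaring map on Satake transforms.

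Next I would verify that Hecke functions related by this isomorphism actually match in the sense of Proposition~\ref{prop: Jacquet 2}. By multiplicativity of the Satake transform and linearity of orbital integrals, it suffices to check this on generators, say the characteristic function $\mathbf{1}_{K t(\pi_{\varv}) K}$ of a principal double coset on the $\PGL_2$ side (with $K = \PGL_2(O_{\varv})$) and its Satake pre-image on the metaplectic side. The plan is to compute both orbital integrals $O_{\shskip\psi_{\varv}}(n(a/4D)\varw; f_{\varv})$ and $O_{\shskip\psi^D_{\varv}}(\widetilde\varw\shskip\widetilde s(a); f_{\varv}')$ explicitly by Bruhat decomposition. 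On the $\PGL_2$ side this reduces to a standard Kloosterman-type integral; on the metaplectic side the Kubota cocycle enters, but the odd-residue-unramified hypothesis reduces the Weil constant $\gamma(a, \psi_{\varv}^D)$ to a quadratic residue symbol, keeping the transfer factor $\psi_{\varv}(-2D/a)|a|_{\varv}^{1/2}/\gamma(a, \psi_{\varv}^D)$ elementary.

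The main obstacle is the combinatorial bookkeeping to reconcile the transfer factor and the twist $a \mapsto a/4D$ appearing in Proposition~\ref{prop: Jacquet 2} with the squaring map $X \mapsto Y^2$ underlying the Shimura correspondence. Concretely, I would take the Mellin transform in $a$ of the identity of Proposition~\ref{prop: Jacquet 2} and reinterpret each side as the product of the appropriate Satake transform evaluated at the matched spectral parameter with a local gamma factor; the relation $X = Y^2$ should then force the two Mellin transforms to agree. Once matching is verified on a single pair of generators, the singular orbital integrals $S_{\psi_{\varv}}^{\hskip -1 pt \pm}(f_{\varv})$ and $S_{\psi^D_{\varv}}^{\hskip -1 pt \pm}(f_{\varv}')$ are controlled by the asymptotics at $a = 0$ and match automatically via the factor $(\pm 1, -1)_{\varv}\gamma(\pm 1,\psi_{\varv}^D)/\gamma(1,\psi_{\varv}^D)$; multiplicativity of the Satake isomorphism then propagates the matching to the entire Hecke algebra, yielding the canonical isomorphism \eqref{5eq: Hecke isom}.
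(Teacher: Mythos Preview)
The paper does not give its own proof of this proposition; it is stated as a result of Jacquet and referenced to \cite[\S 2, 5, 8]{Jacquet-RTF}. So there is no in-paper argument to compare against, and your proposal should be measured against Jacquet's original computation rather than anything in \S \ref{sec: Jacquet-RTF}.

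As a sketch of Jacquet's method your outline is broadly on the right track---the isomorphism is indeed dictated by the Satake picture and the local Shimura correspondence---but there is a genuine gap in your reduction step. You write that ``by multiplicativity of the Satake transform and linearity of orbital integrals, it suffices to check this on generators.'' This does not follow: the Satake transform is multiplicative, but the orbital-integral map $f_{\varv} \mapsto O_{\shskip\psi_{\varv}}(n(a/4D)\varw; f_{\varv})$ is only \emph{linear} in $f_{\varv}$, not an algebra homomorphism with respect to convolution. Verifying the matching identity for a single algebra generator such as $\mathbf{1}_{K t(\pi_{\varv}) K}$ says nothing about its convolution powers unless you first prove some compatibility of orbital integrals with convolution (a nontrivial statement that, when available, is itself the content of the fundamental lemma). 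In practice Jacquet computes the orbital integrals explicitly for the full linear basis of double-coset characteristic functions (on both the $\PGL_2$ and the metaplectic side) and compares them directly; the combinatorics is carried out case by case rather than reduced to a single generator.

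Your Mellin-transform idea is closer to a workable strategy, since it amounts to reinterpreting the family of orbital integrals as a function on the unramified dual and then invoking the Satake parameter relation $X = Y^2$. But to make that rigorous you would need to establish that the Mellin transform of $O_{\shskip\psi_{\varv}}(n(a/4D)\varw; f_{\varv})$ depends on $f_{\varv}$ only through its Satake transform evaluated at the relevant parameter---which is essentially the spectral form of the fundamental lemma and is again the substance of Jacquet's \S 8, not a formality.
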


Combining Proposition \ref{prop: Jacquet 1}, \ref{prop: Jacquet 2} and \ref{prop: Jacquet 3}, we have the following theorem.

\begin{thm}[Jacquet]\label{thm: I=J}
	Fix a finite set of places $S$ that contains all the bad places and the places where $\psi$ or $\psi^D$ is ramified. Let $f $ and $f'$ be as in Proposition {\rm\ref{prop: Jacquet 1}}. Assume that the local functions $f_{\varv}$ and $f_{\varv}'$ match as in Proposition {\rm\ref{prop: Jacquet 2}}  for each $\varv$ and that $f_{\varv} \in {H} (Z(F_{\varv} ) \backslash G(F_{\varv} ) ) $ and $ f_{\varv} \in {H} (\widetilde{S} (F_{\varv})) $ correspond to one another via the isomorphism {\rm\eqref{5eq: Hecke isom}} in  Proposition {\rm\ref{prop: Jacquet 3}} for each $\varv \notin S$. Then
	\begin{align*}
		I (f, \psi) = J (f', \psi^D).
	\end{align*}
\end{thm}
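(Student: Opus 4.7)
The plan is to combine the three geometric-side results of Jacquet. Applying Proposition \ref{prop: Jacquet 1} expands each of $I(f, \psi)$ and $J(f', \psi^D)$ as the sum of two singular orbital integrals plus an infinite family of regular ones indexed by $\xi \in F^\times$. I would match these pieces termwise using Proposition \ref{prop: Jacquet 2} at the places in $S$ and Proposition \ref{prop: Jacquet 3} at the remaining places. Absolute convergence of the resulting global sums and products is standard: for fixed $\xi$, the local orbital integrals equal $1$ at almost every $\varv$ (since $f_\varv$ and $f_\varv'$ are unit elements of the respective Hecke algebras there), so each term is a finite product, and the $\xi$-sum has only finitely many nonzero contributions once $f$ and $f'$ are fixed.

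For the regular part, reparameterize the $\PGL_2$-sum via $\xi \mapsto \xi/4D$, a bijection of $F^\times$ since $4D \in F^\times$, so that both sums are indexed by the same $\xi$. Proposition \ref{prop: Jacquet 2} then yields the factorization
\[
\prod_\varv O_{\shskip\psi_\varv}(n(\xi/4D)\varw; f_\varv) = \Big(\prod_\varv O_{\shskip\psi_\varv^D}(\widetilde\varw \shskip\widetilde s(\xi); f_\varv')\Big) \cdot \prod_\varv \frac{\psi_\varv(-2D/\xi) \shskip |\xi|_\varv^{1/2}}{\gamma(\xi, \psi_\varv^D)}.
\]
The global correction factor on the right is trivial because (i) $\psi$ is trivial on $F$, so $\prod_\varv \psi_\varv(-2D/\xi) = \psi(-2D/\xi) = 1$; (ii) the product formula gives $\prod_\varv |\xi|_\varv = 1$; (iii) the global product formula for the Weil index yields $\prod_\varv \gamma(\xi, \psi_\varv^D) = 1$. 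Hence the regular sums coincide.

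For the singular part, Proposition \ref{prop: Jacquet 2} expresses $S_{\psi_\varv}^{\pm}(f_\varv)$ in terms of $S_{\psi_\varv^D}^{\pm}(f_\varv')$ up to the local scalar $(\pm 1, -1)_\varv \gamma(\pm 1, \psi_\varv^D)/\gamma(1, \psi_\varv^D)$. Taking the product over $\varv$ again reduces the correction to $1$: Hilbert reciprocity gives $\prod_\varv (\pm 1, -1)_\varv = 1$, and each of $\prod_\varv \gamma(\pm 1, \psi_\varv^D)$ and $\prod_\varv \gamma(1, \psi_\varv^D)$ equals $1$ by the same global product formula. Summing the matched singular and regular contributions produces $I(f, \psi) = J(f', \psi^D)$.

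The genuine work is already packaged into the cited propositions: Proposition \ref{prop: Jacquet 2} is the hard local comparison of orbital integrals (forcing the $\gamma$-factors and Hilbert symbols onto us), while Proposition \ref{prop: Jacquet 3} is the \emph{fundamental lemma} at the almost-all unramified places, ensuring that the canonical Hecke-algebra isomorphism preserves matching without any ad hoc adjustment outside $S$. The main obstacle to watch for in what remains is normalization hygiene: one must verify that the global product formulas of Hilbert and Weil line up precisely with the local correction factors in Proposition \ref{prop: Jacquet 2}, so that any drift in the conventions for $\gamma$ or the Hilbert symbol between the local and global sides is reconciled before the correction factors can be declared trivial.
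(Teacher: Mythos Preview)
Your proposal is correct and follows exactly the approach the paper intends: the paper simply writes ``Combining Proposition \ref{prop: Jacquet 1}, \ref{prop: Jacquet 2} and \ref{prop: Jacquet 3}, we have the following theorem'' without spelling out the termwise matching, so you have in fact supplied the details the authors omit. The reparameterization $\xi \mapsto \xi/4D$ on the regular side and the invocation of the product formula, the triviality of $\psi$ on $F$, Hilbert reciprocity, and the global Weil-index product formula to kill the local correction factors are precisely what is needed, and there is nothing further in the paper's own argument.
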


\subsection{Connection to the Shimura-Waldspurger correspondence}

For an irreducible cuspidal  representation  $\pi$ of $G(\BA)$, trivial on the central, define
\begin{align}\label{5eq: defn of I pi}
	I_{\pi} (f, \psi) = \sum_{\varphi_{\mathfrak i}} Z (\pi (f) \varphi_{\mathfrak i}) \overline {W_{\varphi_{\mathfrak i}} (1)},
\end{align}
with $\{\varphi_{\mathfrak i}\}$  an orthonormal basis  of $V_{\pi}$; here for $\varphi \in V_{\pi}$
\begin{align*}
	\pi (f) \varphi & = \int_{Z(\BA) \backslash G(\BA)} f(g) \pi (g) \varphi \, d g, \\
	W_{\varphi}  (1)  = \int_{\BA/F}   \varphi  (  n(&x))  \psi (-x) d x,  \hskip 10 pt Z (\varphi )  = \int_{\BA^{\times} \hskip -1 pt / F^{\times} } \varphi (t(a)) \dx a.
\end{align*}
For  an irreducible cuspidal  representation $\widetilde{\pi}$ of $\widetilde{S} (\BA)$ define
\begin{align*}\label{5eq: defn of J pi}
	J_{\widetilde{\pi}} (f', \psi^D) = \sum_{\widetilde\varphi_{\mathfrak j}} W^{D}_{\widetilde{\pi} (f') \widetilde\varphi_{\mathfrak j}} (1) \overline {\widetilde W^{D}_{\widetilde\varphi_{\mathfrak j}} (1)},
\end{align*}
with  $\{ \widetilde \varphi_{\mathfrak j} \}$  an orthonormal basis of $V_{\widetilde\pi}$; here for   $\widetilde\varphi \in V_{\widetilde\pi}$
\begin{align*}
	\widetilde\pi (f') \widetilde\varphi & = \int_{\widetilde S(\BA)} f'(g) \widetilde \pi (g) \widetilde\varphi \, d g, \\
	W^D_{\widetilde\varphi} (1) & = \int_{\BA/F} \widetilde \varphi (\widetilde n(x)) \psi^D (-x) d x.
\end{align*}
The distributions $I_{\pi} (f, \psi)$ and $J_{\widetilde{\pi}} (f', \psi^D)$ are the contributions from $\pi$ and $\widetilde \pi$ to $I (f, \psi)$ and  $J (f', \psi^D)$ in their  spectral decompositions, respectively.

Recall that, if $\pi_{\varv}$ is unramified, there is a vector $\varphi_{\ro, \shskip \varv}$ that is fixed
under the action of $G(O_{\varv})$. For each Hecke function $f_{\varv}$ in $ H(Z(\Fv) \backslash G(\Fv))$ there is a constant $\hat f_{\varv} (\pi_{\varv})$ such that
\begin{equation}\label{5eq: f phi, pi}
	\pi_{\varv} (f_{\varv} ) \varphi_{\ro, \shskip \varv} = \hat f_{\varv} (\pi_{\varv}) \varphi_{\ro, \shskip \varv}.
\end{equation}
Similarly, if $\widetilde{\pi}_{\varv}$ is unramified, let $\widetilde\varphi_{\ro, \shskip \varv}$ be a vector that is fixed under $S(O_{\varv})$, then for each  $f_{\varv}'$ in $H (\widetilde{S} (\Fv))$ there is a constant $\hat f'_{\varv} (\widetilde\pi_{\varv})$ with
\begin{equation}\label{5eq: f' phi, pi tilde}
	\widetilde\pi_{\varv} (f'_{\varv} ) \widetilde\varphi_{\ro, \shskip \varv} = \hat f'_{\varv} (\widetilde\pi_{\varv}) \widetilde\varphi_{\ro, \shskip \varv}.
\end{equation}

\begin{thm}\label{thm:Jacquet:Main}
	Let $\pi$ be a cuspidal representation of $G (\BA)$ with trivial central character such that the distribution $I_{\pi} (f, \psi)$ is nontrivial.
	
	{\rm(1).} There is a unique cuspidal representation $\widetilde \pi$ of  $\widetilde{S}$ such that if $f$ and $f'$ match as in Theorem {\rm\ref{thm: I=J}} then
	\begin{equation}\label{5eq: I pi = J pi tilde}
		I_{\pi} (f, \psi) = J_{\widetilde{\pi} } (f', \psi^D).
	\end{equation}
	
	{\rm(2).} Suppose that $S$ satisfies the condition in Theorem {\rm\ref{thm: I=J}} and contains all the places
	where $\pi_{\varv}$ or $\widetilde{\pi}_{\varv}$ is not unramified. For $\varv \notin S$, if the Hecke functions $f_{\varv}$ and $f'_{\varv}$ correspond to each other under the isomorphism {\rm\eqref{5eq: Hecke isom}} in Proposition {\rm\ref{prop: Jacquet 3}}, then
	\begin{align}\label{5eq: hat f = hat f'}
		\hat f_{\varv} ( \pi_{\varv}) = \hat f'_{\varv} (\widetilde\pi_{\varv}).
	\end{align}
	
	{\rm(3).} $\widetilde{\pi} = \Theta (\pi, \psi^D)$.
\end{thm}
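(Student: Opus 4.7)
The plan is to spectrally decompose both sides of the global identity $I(f,\psi)=J(f',\psi^D)$ of Theorem \ref{thm: I=J}, and then to isolate the contribution of a fixed cuspidal $\pi$ by varying Hecke operators at all unramified places. Expanding the kernels $K(x,y;f)$ and $K(x,y;f')$ spectrally produces decompositions
\begin{align*}
I(f,\psi)=\sum_{\sigma} I_{\sigma}(f,\psi)+I_{\mathrm{cont}}(f,\psi),\hskip 10pt J(f',\psi^D)=\sum_{\widetilde\sigma} J_{\widetilde\sigma}(f',\psi^D)+J_{\mathrm{cont}}(f',\psi^D),
\end{align*}
where $\sigma$, $\widetilde\sigma$ range over the cuspidal spectra and the continuous/residual terms are handled as in \cite{Jacquet-RTF}. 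The distributions $I_\pi$ and $J_{\widetilde\pi}$ defined in \eqref{5eq: defn of I pi} are precisely the cuspidal summands.

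\textbf{Steps for parts (1) and (2).} Fix $S$ as in Theorem \ref{thm: I=J} enlarged to contain all places where $\pi_\varv$ or $\widetilde\pi_\varv$ is ramified, and factor $f=f_S\otimes\bigotimes_{\varv\shskip\notin S} f_\varv$ with $f_\varv\in H(Z(\Fv)\backslash G(\Fv))$, matched by $f'=f'_S\otimes\bigotimes_{\varv\shskip\notin S} f'_\varv$ under the isomorphism \eqref{5eq: Hecke isom}. By \eqref{5eq: f phi, pi}--\eqref{5eq: f' phi, pi tilde},
\begin{align*}
I_{\sigma}(f,\psi)=I_{\sigma}(f_S\otimes 1,\psi)\prod_{\varv\shskip\notin S}\hat f_\varv(\sigma_\varv),\hskip 10pt J_{\widetilde\sigma}(f',\psi^D)=J_{\widetilde\sigma}(f'_S\otimes 1,\psi^D)\prod_{\varv\shskip\notin S}\hat f'_\varv(\widetilde\sigma_\varv).
\end{align*}
Substituting into Theorem \ref{thm: I=J} and letting the tuples $(f_\varv)_{\varv\shskip\notin S}$ range over the unramified Hecke algebra, the Jacobson density / linear independence of unramified characters separates the identity according to the joint Satake data $(\hat f_\varv(\sigma_\varv))_{\varv\shskip\notin S}$. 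Strong multiplicity one on the $\GL_2$ side pins $\sigma=\pi$, producing a finite collection $\{\widetilde\pi\}$ on the right side whose eigenvalues satisfy $\hat f'_\varv(\widetilde\pi_\varv)=\hat f_\varv(\pi_\varv)$ for every $\varv\notin S$ and every matching pair. This yields part (2) directly and reduces part (1) to showing that exactly one such $\widetilde\pi$ survives.

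\textbf{Step for parts (1) uniqueness and (3).} By construction the Hecke isomorphism \eqref{5eq: Hecke isom} is the unramified Shimura correspondence, so $\hat f_\varv(\pi_\varv)=\hat f'_\varv(\theta(\pi_\varv,\psi_\varv^D))$, and thus every candidate $\widetilde\pi$ satisfies $\widetilde\pi_\varv\cong\theta(\pi_\varv,\psi_\varv^D)$ for all $\varv\notin S$. The nontriviality of $I_\pi$ forces, via the Hecke unfolding of $Z(\pi(f)\varphi)$ in \eqref{5eq: defn of I pi}, the nonvanishing of $L(\pi,1/2)$; hence by Theorem \ref{thm:Waldspurger-Global}(2) one has $\Theta(\pi,\psi^D)\neq 0$, and by Theorem \ref{thm:Waldspurger-Global}(1) the candidates $\widetilde\pi$ all lie in the near-equivalence class of $\Theta(\pi,\psi^D)$. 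Finally, the nonvanishing of $J_{\widetilde\pi}(f',\psi^D)$ requires $\widetilde\pi$ to admit a nontrivial $\psi^D$-Whittaker model, which by Theorem \ref{thm: Waldspurger-Local}(3) combined with Theorem \ref{thm:Waldspurger-Global}(5) singles out exactly $\widetilde\pi=\Theta(\pi,\psi^D)$ within the Waldspurger packet, giving both uniqueness in (1) and the identification (3).

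\textbf{Main obstacle.} The principal difficulty is that one is only permitted to vary $f$ and $f'$ as \emph{matched} pairs under Proposition \ref{prop: Jacquet 2}, so no Hecke idempotent can be applied to either side in isolation; the Hecke-algebra identification \eqref{5eq: Hecke isom} of Proposition \ref{prop: Jacquet 3} is what makes synchronous unramified Hecke projections possible at every $\varv\notin S$. A secondary obstacle is that unramified Satake data do not distinguish members of a single Waldspurger packet on $\widetilde S$, so the final selection of $\widetilde\pi$ must exploit the global $\psi^D$-Whittaker nonvanishing encoded in the very definition of $J_{\widetilde\pi}$, precisely as in Theorem \ref{thm:Waldspurger-Global}(5).
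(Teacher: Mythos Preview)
The paper does not give its own proof of Theorem \ref{thm:Jacquet:Main}; immediately after the statement it records that parts (1) and (2) are due to Jacquet \cite{Jacquet-RTF} and that part (3) is proven in \cite{BaruchMao-Global} by combining Jacquet's work with Waldspurger's theory (Theorems \ref{thm: Waldspurger-Local} and \ref{thm:Waldspurger-Global}). Your sketch is a faithful outline of exactly that combined argument---spectral expansion of both kernels, Hecke isolation at $\varv\notin S$ via the algebra isomorphism of Proposition \ref{prop: Jacquet 3}, and then selection of the unique member of the Waldspurger packet via the $\psi^D$-Whittaker nonvanishing implicit in $J_{\widetilde\pi}$---so there is nothing to correct, and you have in fact supplied more detail than the paper itself.

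One small remark on attribution and logical order: the paper credits the \emph{uniqueness} in (1) to Jacquet alone, whereas your sketch establishes uniqueness only after invoking Waldspurger's packet description in the step for (3). This is not a mathematical gap---your route certainly proves uniqueness---but it blurs the division the paper draws between what \cite{Jacquet-RTF} already contains and what \cite{BaruchMao-Global} adds. If you wish to mirror the paper's attribution, you would separate out that at most one cuspidal $\widetilde\pi$ with the prescribed Satake data can have nonzero $\psi^D$-Whittaker period (this is what Jacquet uses), and reserve the identification $\widetilde\pi=\Theta(\pi,\psi^D)$ for the Waldspurger input.
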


\begin{rem}
	The distribution $I_{\pi} (f, \psi)$ is nontrivial if and only if $L (\pi, 1/2) \neq 0$.
\end{rem}

In this theorem, (1) and (2) are due to Jacquet, while (3) is proven in \cite{BaruchMao-Global} by combining the work of Jacquet and the theory of Waldspurger (Theorem \ref{thm: Waldspurger-Local} and \ref{thm:Waldspurger-Global}).

\section{Review of the Local Bessel Distributions and Their Bessel Identities}\label{sec: local Bessel identity}

In this section, we review the local Bessel identities established in \cite{BaruchMao-NA}, \cite{BaruchMao-Real} and \cite{Chai-Qi-Bessel} which complement the global identiy \eqref{5eq: I pi = J pi tilde} in Theorem \ref{thm:Jacquet:Main}.
We shall retain the notations and assumptions as in Theorem \ref{thm:Jacquet:Main}.

\subsection{The local relative Bessel distribution $I_{\pi,\shskip\varv}(f_\varv, \psi_\varv)$.}

As in \S \ref{sec: defn of d}, we fix a choice of the local $\psi_{\varv}$-Whittaker functional $L_\varv$ on $\pi_\varv$, and use it to define the Hermitian form on $V_{\pi_\varv}$. For each $\varv\in S$, fix an orthonormal basis $\{\varphi_{\mathfrak{i}, \shskip \varv} \}$ of $V_{\pi_\varv}$. For $\varv\notin S$, let $\varphi_{\ro,\shskip\varv}$ be the normalized spherical vector as in \S \ref{sec: defn of d}. In view of \eqref{2eq: c2}, after rescaled by the factor $c_2(\pi, S, \psi, \{L_\varv\})^{-1}$, the tensor products formed by these $\varphi_{\mathfrak{i}, \shskip \varv}$ and $\varphi_{\ro,\shskip\varv}$ give rise to an orthonormal basis for $ V_{\pi}^S $, say, denoted by  $\{ \varphi_{\mathfrak{i}(S)} \}$.
With our choice of $f$, the operator $\pi(f)$ preserves $V_{\pi}^S$ but annihilates its orthogonal complement in $V_\pi$. Then the expression in \eqref{5eq: defn of I pi} becomes
\begin{align}\label{6eq: I(f) = ...}
I_{\pi}(f,\psi)=\sum_{\varphi_{\mathfrak{i}(S)}}Z(\pi(f)\varphi_{\mathfrak{i}(S)})\overline {W_{\varphi_{\mathfrak{i}(S)}}(1)}.
\end{align}

Using the Hecke theory for $\GL_2$, it is easy to verify the following lemma.
\begin{lem}
For a pure tensor $\varphi \in V_{\pi}^S$, we have
\begin{align}\label{6eq: Z(phi)}
Z(\varphi)=c_1(\pi, S,\psi, \{L_\varv\})L(\pi, 1/2)\prod_{\varv\shskip\in S}P_{\varv}(\varphi_\varv),
\end{align}
with
\begin{align}\label{6eq: P(phi)}
P_\varv(\varphi_\varv)= \frac 1 {{L(\pi_{\varv}, s)}} \int_{F_{\varv}^\times}L_\varv(\pi_\varv(t(a))\varphi_\varv)|a|_\varv^{s-1/2}\dx a \,  \bigg|_{\hskip -1 pt s=1/2}.
\end{align}
\end{lem}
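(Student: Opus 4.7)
The plan is to view $Z(\varphi)$ as the specialization at $s = 1/2$ of the Hecke-type global zeta integral
$$Z(\varphi, s) = \int_{\BA^\times / F^\times} \varphi(t(a)) \, |a|^{s - 1/2} \dx a,$$
and to derive the identity \eqref{6eq: Z(phi)} by the standard three-step Hecke unfolding: unfold against the Fourier--Whittaker expansion of $\varphi$, factor the resulting integral into local zeta integrals using uniqueness of the Whittaker model, and evaluate the unramified local factors in terms of $L(\pi_\varv, s)$.

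First, the integral $Z(\varphi, s)$ converges absolutely for $\Re s$ sufficiently large by cuspidality and rapid decay of $\varphi$. Inserting the Fourier expansion
$$\varphi(g) = \sum_{\gamma \in F^\times} W_\varphi(t(\gamma) g)$$
and collapsing the quotient by $F^\times$ against the sum over $\gamma \in F^\times$ produces the unfolded form
$$Z(\varphi, s) = \int_{\BA^\times} W_\varphi(t(a)) \, |a|^{s - 1/2} \dx a.$$

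Second, I will factor the global Whittaker function. By uniqueness of the local $\psi_\varv$-Whittaker models, the ratio $W_\varphi(g) / \prod_\varv L_\varv(\pi_\varv(g_\varv) \varphi_\varv)$ is a constant in $g$; evaluating at $g = 1$ and comparing with \eqref{2eq: c1} identifies this constant as $c_1(\pi, S, \psi, \{L_\varv\})$. Substituting this factorization into the unfolded integral, and breaking the ad\`elic integral into an Eulerian product (legitimate in the region of absolute convergence), yields
$$Z(\varphi, s) = c_1(\pi, S, \psi, \{L_\varv\}) \prod_\varv \int_{\Fv^\times} L_\varv(\pi_\varv(t(a)) \varphi_\varv) \, |a|_\varv^{s - 1/2} \dx a.$$
At each $\varv \notin S$, the standard unramified computation on the spherical vector $\varphi_{\ro, \shskip \varv}$ evaluates the local factor to $L(\pi_\varv, s)$. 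Pulling the full $L$-function $L(\pi, s) = \prod_\varv L(\pi_\varv, s)$ out of the Euler product leaves
$$Z(\varphi, s) = c_1(\pi, S, \psi, \{L_\varv\}) \, L(\pi, s) \prod_{\varv \in S} P_\varv(\varphi_\varv, s),$$
with $P_\varv(\varphi_\varv, s)$ given by the right-hand side of \eqref{6eq: P(phi)} prior to specializing $s$.

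Finally, the local theory of Jacquet--Langlands guarantees that each normalized local zeta integral $P_\varv(\varphi_\varv, s)$ is entire in $s$, while $L(\pi, s)$ is holomorphic at $s = 1/2$ by cuspidality of $\pi$. Both sides therefore continue meromorphically from $\Re s \gg 0$ and the evaluation at $s = 1/2$ produces the desired identity \eqref{6eq: Z(phi)}. The main obstacle in this scheme is the Eulerian factorization step and its compatibility with analytic continuation to the central point; however, both are wholly routine in the Hecke theory for $\GL_2$---absolute convergence for $\Re s \gg 0$ secures the interchange of sum, product, and integral, and the $L$-factor interpretation of the unramified local integrals, combined with the holomorphy of the $P_\varv(\varphi_\varv, s)$, carries the identity safely to $s = 1/2$.
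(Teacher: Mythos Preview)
Your proposal is correct and is precisely the standard Hecke-theory unfolding that the paper has in mind; the paper itself offers no proof beyond the single sentence ``Using the Hecke theory for $\GL_2$, it is easy to verify the following lemma.'' Your three steps---Fourier--Whittaker unfolding, Eulerian factorization via the constant $c_1$ and uniqueness of Whittaker models, and unramified evaluation followed by analytic continuation to $s=1/2$---are exactly the intended argument.
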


We now define the local relative Bessel distribution as in \cite{BaruchMao-NA,BaruchMao-Real,Chai-Qi-Bessel},
\begin{align}\label{6eq: I(f), local}
I_{\pi_\varv}(f_\varv,\psi_\varv)= \sum_{\varphi_{\mathfrak{i}, \shskip\varv}} P_\varv(\pi_\varv(f_\varv)\varphi_{\mathfrak{i}, \shskip\varv})\overline {L_\varv(\varphi_{\mathfrak{i}, \shskip\varv})}.
\end{align}
Note that this definition is independent on the choice of $  L_\varv$ at the beginning.


The following proposition is readily established on \eqref{2eq: c1}, \eqref{2eq: defn of d} and \eqref{6eq: I(f) = ...}-\eqref{6eq: I(f), local}.
\begin{prop}\label{prop: express of I}
Let notations be as above. We have
\begin{align}
I_\pi(f,\psi)=L(\pi, 1/2)|d_\pi(S, \psi)|^2\prod_{\varv\shskip\in S}I_{\pi_\varv}(f_\varv,\psi_\varv)\prod_{\varv\shskip\notin S}\hat {f}_\varv(\pi_\varv).
\end{align}
\end{prop}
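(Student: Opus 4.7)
The plan is to unfold \eqref{6eq: I(f) = ...} using the orthonormal basis $\{\varphi_{\mathfrak{i}(S)}\}$ of $V_\pi^S$ described just before the statement, and then reduce all global quantities to local ones via the multiplicativity encoded in \eqref{2eq: c1}, \eqref{2eq: c2}, \eqref{6eq: Z(phi)}, and the unramified action \eqref{5eq: f phi, pi}.

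First I would write a typical basis element as
\[\varphi_{\mathfrak{i}(S)}=c_2(\pi,S,\psi,\{L_\varv\})^{-1}\bigotimes_{\varv\in S}\varphi_{\mathfrak{i},\shskip\varv}\otimes\bigotimes_{\varv\notin S}\varphi_{\ro,\shskip\varv},\]
and compute $\pi(f)\varphi_{\mathfrak{i}(S)}$ using $f=\otimes_\varv f_\varv$ together with \eqref{5eq: f phi, pi}; the vector remains a pure tensor with the scalar $\prod_{\varv\notin S}\hat f_\varv(\pi_\varv)$ pulled out of the places outside $S$. Applying \eqref{6eq: Z(phi)} then yields
\[Z(\pi(f)\varphi_{\mathfrak{i}(S)})=c_2^{-1}c_1\, L(\pi,1/2)\prod_{\varv\in S}P_\varv(\pi_\varv(f_\varv)\varphi_{\mathfrak{i},\shskip\varv})\prod_{\varv\notin S}\hat f_\varv(\pi_\varv),\]
after observing that $P_\varv(\varphi_{\ro,\shskip\varv})=1$ at every $\varv\notin S$: the integral in \eqref{6eq: P(phi)} on the normalized spherical Whittaker function reproduces the local $L$-factor identically in $s$, so the division by $L(\pi_\varv,s)$ built into $P_\varv$ cancels it exactly.

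Next I would compute $W_{\varphi_{\mathfrak{i}(S)}}(1)$ directly from \eqref{2eq: c1}, obtaining $W_{\varphi_{\mathfrak{i}(S)}}(1)=c_2^{-1}c_1\prod_{\varv\in S}L_\varv(\varphi_{\mathfrak{i},\shskip\varv})$, since $L_\varv(\varphi_{\ro,\shskip\varv})=1$ at unramified places by construction. Forming the product $Z(\pi(f)\varphi_{\mathfrak{i}(S)})\overline{W_{\varphi_{\mathfrak{i}(S)}}(1)}$ produces the scalar $|c_1|^2 c_2^{-2}=|d_\pi(S,\psi)|^2$, which is independent of the basis index and can therefore be pulled outside the sum. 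The residual sum over $\mathfrak{i}(S)$ factors as a product, over $\varv\in S$, of independent sums over the local orthonormal bases $\{\varphi_{\mathfrak{i},\shskip\varv}\}$, and each of these inner sums is precisely $I_{\pi_\varv}(f_\varv,\psi_\varv)$ by \eqref{6eq: I(f), local}. Assembling the pieces gives the claimed formula.

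This argument is essentially bookkeeping and I do not foresee a substantive obstacle. The only checkpoint that deserves care is the normalization $P_\varv(\varphi_{\ro,\shskip\varv})=1$ for $\varv\notin S$, which is precisely why the $L$-factor appears in the denominator of \eqref{6eq: P(phi)}; once this is in place, the decoupling of the sum over $\mathfrak{i}(S)$ into local sums is legitimate because, by the choice of $f$, the operator $\pi(f)$ preserves $V_\pi^S$ and annihilates its orthogonal complement in $V_\pi$, so no contribution from outside $V_\pi^S$ enters the spectral expansion.
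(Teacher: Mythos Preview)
Your proposal is correct and is exactly the unfolding the paper has in mind: the paper does not give a detailed argument but simply states that the proposition ``is readily established on \eqref{2eq: c1}, \eqref{2eq: defn of d} and \eqref{6eq: I(f) = ...}--\eqref{6eq: I(f), local}'', and your computation is precisely that bookkeeping. One minor remark: since \eqref{6eq: Z(phi)} is already stated with the product only over $\varv\in S$, your aside about $P_\varv(\varphi_{\ro,\shskip\varv})=1$ for $\varv\notin S$ is not strictly needed at that step, though it is of course the reason the lemma takes the form it does.
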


\subsection{The local Bessel distribution $J_{\widetilde \pi_\varv}(f_\varv ',\psi_\varv^D)$.}

We can apply the same argument to the distribution $J_{\widetilde \pi}(f', \psi^D)$. The corresponding local Bessel distribution is defined by
\begin{align}\label{6eq: defn of J, local}
J_{\widetilde \pi_\varv}(f_\varv ', \psi_\varv^D)=\sum_{\widetilde \varphi_{\mathfrak{j}, \shskip\varv}}\widetilde L_{\varv}^D(\widetilde \pi_\varv(f_\varv ')\widetilde \varphi_{\mathfrak{j},\shskip\varv})\overline {\widetilde L_{\varv}^D(\widetilde \varphi_{\mathfrak{j},\shskip\varv})}.
\end{align}
where the sum is over an orthonormal basis $\{\widetilde \varphi_{\mathfrak{j},\shskip\varv} \}$ of $V_{\widetilde \pi_\varv}$. Again this definition is independent on the local $\psi^D_{\varv}$-Whittaker functional $\widetilde L_{\varv}^D$ that we choose.

\begin{prop}\label{prop: express of J} Let notations be as above. We have
\begin{align}
J_{\widetilde \pi}(f', \psi^D)=|d_{\shskip\widetilde \pi}(S, \psi^D)|^2\prod_{\varv\shskip\in S}J_{\widetilde \pi_\varv}(f_\varv ', \psi_\varv^D)\prod_{\varv\shskip\notin S}\hat {f}'_\varv(\widetilde \pi_\varv).
\end{align}
\end{prop}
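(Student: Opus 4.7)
The plan is to carry out the metaplectic analog of the argument for Proposition \ref{prop: express of I}, with the crucial simplification that both factors appearing in $J_{\widetilde\pi}(f',\psi^D)$ are now of the same Whittaker type (rather than a Whittaker coefficient paired with a zeta integral). Our hypothesis on $S$ together with the assumption $f_\varv' \in H(\widetilde S(F_\varv))$ for $\varv \notin S$ ensures that $\widetilde\pi(f')$ preserves $V_{\widetilde\pi}^S$ and kills its orthogonal complement, so it suffices to evaluate \eqref{5eq: defn J(f')} on an orthonormal basis of $V_{\widetilde\pi}^S$.

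First I would assemble such a basis from local data. For each $\varv \in S$ fix the $\psi_\varv^D$-Whittaker functional $\widetilde L_\varv^D$ and a local orthonormal basis $\{\widetilde\varphi_{\mathfrak j,\shskip \varv}\}$ of $V_{\widetilde\pi_\varv}$; for each $\varv \notin S$ let $\widetilde\varphi_{\ro,\shskip \varv}$ be the spherical vector normalized so that $\widetilde L_\varv^D(\widetilde\varphi_{\ro,\shskip \varv}) = 1$. By the definition \eqref{2eq: c2, S} of $\widetilde c_2 = \widetilde c_2(\widetilde\pi, S, \psi^D, \{\widetilde L_\varv^D\})$, the restricted tensor products of these vectors, each rescaled by $\widetilde c_2^{\,-1}$, furnish an orthonormal basis $\{\widetilde\varphi_{\mathfrak j(S)}\}$ of $V_{\widetilde\pi}^S$, reducing the global sum to a sum of $\widetilde W^D_{\widetilde\pi(f')\widetilde\varphi_{\mathfrak j(S)}}(1)\,\overline{\widetilde W^D_{\widetilde\varphi_{\mathfrak j(S)}}(1)}$ over this basis.

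Next I would factor each Whittaker coefficient locally. For any pure tensor $\widetilde\varphi \in V_{\widetilde\pi}^S$, the analog of \eqref{2eq: tilde c1} with $\psi^D$ in place of $\psi$ yields
\begin{align*}
\widetilde W^D_{\widetilde\varphi}(1) = \widetilde c_1(\widetilde\pi, S, \psi^D, \{\widetilde L_\varv^D\}) \prod_{\varv \shskip \in S} \widetilde L_\varv^D(\widetilde\varphi_\varv).
\end{align*}
Combined with \eqref{5eq: f' phi, pi tilde} and the place-by-place action of $\widetilde\pi(f')$ on a pure tensor (as $\widetilde\pi_\varv(f_\varv')$ for $\varv \in S$, as the scalar $\hat f_\varv'(\widetilde\pi_\varv)$ for $\varv \notin S$), this gives the same expression for $\widetilde W^D_{\widetilde\pi(f')\widetilde\varphi}(1)$ with $\widetilde L_\varv^D(\widetilde\varphi_\varv)$ at $\varv \in S$ replaced by $\widetilde L_\varv^D(\widetilde\pi_\varv(f_\varv')\widetilde\varphi_\varv)$ and multiplied by $\prod_{\varv \notin S} \hat f_\varv'(\widetilde\pi_\varv)$; the spherical contributions at $\varv \notin S$ drop out thanks to the normalization $\widetilde L_\varv^D(\widetilde\varphi_{\ro,\shskip \varv}) = 1$.

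Substituting both factorizations into the basis sum, the overall scalar $|\widetilde c_1 / \widetilde c_2|^2 = |d_{\widetilde\pi}(S, \psi^D)|^2$ factors out, the places $\varv \notin S$ produce the product $\prod_{\varv \notin S} \hat f_\varv'(\widetilde\pi_\varv)$ (appearing only once, since only the $\widetilde\pi(f')\widetilde\varphi$ factor contains $f'$), and each local sum $\sum_{\widetilde\varphi_{\mathfrak j, \shskip\varv}} \widetilde L_\varv^D(\widetilde\pi_\varv(f_\varv')\widetilde\varphi_{\mathfrak j, \shskip\varv})\overline{\widetilde L_\varv^D(\widetilde\varphi_{\mathfrak j, \shskip\varv})}$ collapses to $J_{\widetilde\pi_\varv}(f_\varv',\psi_\varv^D)$ as defined in \eqref{6eq: defn of J, local}. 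No real obstacle is expected here, since the proof is entirely formal; the only point requiring care is to track that the same normalization conventions for the functionals $\widetilde L_\varv^D$ are used consistently in the definitions of $\widetilde c_1$, $\widetilde c_2$ and of the local Bessel distribution, so that the final identity emerges as stated.
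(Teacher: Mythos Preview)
Your proposal is correct and follows exactly the approach the paper intends: the paper does not write out a separate proof for this proposition but simply remarks that one can ``apply the same argument'' as for Proposition \ref{prop: express of I}, and your write-up is precisely that argument in the metaplectic setting, with the appropriate simplification (no Hecke zeta integral, hence no $L$-value factor) that you correctly identify.
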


\subsection{The local Bessel identity}

\begin{thm}\label{thm:Local Bessel Identity}
Let $\pi_\varv$ be an irreducible unitary representation of $G(F_\varv)$ with trivial central character. Let $D \in F_{\varv}^{\times} $. Put $\widetilde \pi_\varv =\theta(\pi_\varv, \psi_\varv^D)$. Suppose that  $f_{\varv} \in C_c^{\infty} (Z(F_{\varv} ) \backslash G(F_{\varv} ) )$ and $f'_{\varv} \in C_c^{\infty} (\widetilde{S} (F_{\varv}))$ match as in Proposition {\rm\ref{prop: Jacquet 2}}. We have
\begin{align}
J_{\widetilde \pi_\varv}(f'_\varv, \psi_\varv^D)=|2D|_\varv \shskip \epsilon (\pi_\varv, 1/2)L(\pi_\varv, 1/2)I_{\pi_\varv}(f_\varv, \psi_\varv),
\end{align}
in which $J_{\widetilde \pi_\varv}(f'_\varv, \psi_\varv^D)$ and $I_{\pi_\varv}(f_\varv, \psi_\varv)$ are defined as in {\rm\eqref{6eq: I(f), local}} and {\rm\eqref{6eq: defn of J, local}} respectively.
\end{thm}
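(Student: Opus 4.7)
The plan is to reduce the stated identity to the purely local Bessel identity established, case by case, in \cite{BaruchMao-NA} (non-Archimedean), \cite{BaruchMao-Real} (real), and \cite{Chai-Qi-Bessel} (complex), using the matching of orbital integrals from Proposition \ref{prop: Jacquet 2}. The bridge between the two forms of the identity is the spectral realization of $I_{\pi_\varv}$ and $J_{\widetilde\pi_\varv}$ as integral transforms with kernel the local Bessel function of the representation.

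First, I would unfold the defining sum in \eqref{6eq: I(f), local}. Using the integral definition \eqref{6eq: P(phi)} of $P_\varv$ and a Schur-orthogonality argument on the Kirillov/Whittaker model of $\pi_\varv$, the sum over the orthonormal basis $\{\varphi_{\mathfrak{i},\shskip\varv}\}$ collapses and produces a representation
\[
I_{\pi_\varv}(f_\varv,\psi_\varv)=\int_{F_\varv^\times} j_{\pi_\varv,\psi_\varv}(a)\, O_{\shskip\psi_\varv}(n(a)\varw;f_\varv)\, \dx a \,+\, \text{(singular contributions)},
\]
where $j_{\pi_\varv,\psi_\varv}$ is the Bessel function of $\pi_\varv$ with respect to $\psi_\varv$. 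An entirely parallel unfolding of \eqref{6eq: defn of J, local}, using the Bessel/Whittaker model of $\widetilde\pi_\varv$ relative to $\psi_\varv^D$, yields
\[
J_{\widetilde\pi_\varv}(f'_\varv,\psi_\varv^D)=\int_{F_\varv^\times} j_{\shskip\widetilde\pi_\varv,\psi_\varv^D}(a)\, O_{\shskip\psi_\varv^D}(\widetilde\varw\,\widetilde s(a);f'_\varv)\, \dx a \,+\, \text{(singular contributions)},
\]
with $j_{\shskip\widetilde\pi_\varv,\psi_\varv^D}$ the Bessel function of $\widetilde\pi_\varv$. The singular terms are controlled through their asymptotics, exactly as in Proposition \ref{prop: Jacquet 1}.

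Second, I would apply the regular-orbit matching of Proposition \ref{prop: Jacquet 2}, changing variables $a\mapsto a/(4D)$ in the $J$-integral. This substitution converts the $\widetilde S$-orbital integral of $f_\varv'$ into the $G$-orbital integral of the matching $f_\varv$, at the cost of the Weil factor $\psi_\varv(-2D/a)\,|a|_\varv^{1/2}/\gamma(a,\psi_\varv^D)$ and a Jacobian in $|D|_\varv$. Comparing the two expansions, the stated identity
\[
J_{\widetilde\pi_\varv}(f'_\varv,\psi_\varv^D)=|2D|_\varv\,\epsilon(\pi_\varv,1/2)\,L(\pi_\varv,1/2)\,I_{\pi_\varv}(f_\varv,\psi_\varv)
\]
becomes equivalent to a single Bessel identity of the form
\[
j_{\shskip\widetilde\pi_\varv,\psi_\varv^D}(a)\cdot\frac{\gamma(a,\psi_\varv^D)}{|a|_\varv^{1/2}}\,\psi_\varv(2D/a)\;=\;|2D|_\varv\,\epsilon(\pi_\varv,1/2)\,L(\pi_\varv,1/2)\, j_{\pi_\varv,\psi_\varv}(a/4D),
\]
together with the matching of the singular orbital integrals, which is the second clause of Proposition \ref{prop: Jacquet 2}. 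The three local regimes are then handled, respectively, by \cite{BaruchMao-NA}, \cite{BaruchMao-Real}, and \cite{Chai-Qi-Bessel}, which is precisely the statement that this Bessel identity holds.

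The main obstacle, and the reason the whole project required a dedicated paper, lies in the complex case: the needed Bessel identity over $\BC$ is the content of \cite{Chai-Qi-Bessel} and ultimately rests on a complex analogue of the Weber--Hardy exponential integral formula for the Fourier transform of Bessel functions on $\PGL_2(\BC)$, obtained in \cite{Qi-Sph,Qi-II-G}. Once this input is invoked as a black box, the only remaining task is careful bookkeeping of measures and the Weil constants $\gamma(\,\cdot\,,\psi_\varv^D)$, together with the verification that the scalar factor coming from the change of variables and the Fourier/Bessel transform assembles into exactly $|2D|_\varv\,\epsilon(\pi_\varv,1/2)\,L(\pi_\varv,1/2)$, a routine but tedious local computation that parallels the one performed in \cite{BaruchMao-Global} for the totally real case.
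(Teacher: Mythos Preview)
Your proposal is broadly correct in spirit and cites exactly the right sources, but you should be aware that the paper itself does not prove Theorem~\ref{thm:Local Bessel Identity}: it is stated as a review of external results, and the only ``proof'' in the paper is Remark~\ref{rmk:Local Bessel Identity}, which simply records that the identity is established in \cite{BaruchMao-NA}, \cite{BaruchMao-Real}, and \cite{Chai-Qi-Bessel} for $\varv$ non-Archimedean, real, and complex, respectively, and comments on the analytic difficulties in the complex case.

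Your outline goes further than the paper does, sketching the internal architecture of those cited proofs: expressing each local distribution as an integral of the representation's Bessel function against the relevant orbital integrals, applying the change of variables dictated by Proposition~\ref{prop: Jacquet 2}, and reducing everything to a pointwise Bessel-function identity plus the singular matching. This is indeed the shape of the arguments in \cite{BaruchMao-NA,BaruchMao-Real,Chai-Qi-Bessel}, so your description is a faithful high-level summary of those papers rather than an alternative route. One caveat: the precise normalization of the Bessel functions, the handling of the singular terms, and the exact assembly of the constant $|2D|_\varv\,\epsilon(\pi_\varv,1/2)\,L(\pi_\varv,1/2)$ are genuinely delicate in each of the three references and are not ``routine bookkeeping''; in particular, the displayed Bessel identity you wrote down should be regarded as schematic, since the exact form (and the treatment of $L(\pi_\varv,1/2)$ versus the local $\gamma$-factor) differs across the three papers. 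But as a roadmap pointing to the cited literature, your proposal matches what the paper intends.
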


\begin{rem}\label{rmk:Local Bessel Identity}
The proof of this theorem is quite technical.
 The identity is established in \cite{BaruchMao-NA,BaruchMao-Real} and \cite{Chai-Qi-Bessel} when $\varv$ is non-Archimedean, real and complex, respectively. In the former two papers of Baruch and Mao, the complexity lies mostly in the representation theory aspect, although the real case is analytic in nature. In the complex case, however, the representation theory is the simplest---$\widetilde{\SL}_2 (\BC)$ is just the trivial double cover of ${\SL}_2 (\BC)$---while the difficulty is to prove the complex analogue of the exponential integral formulae of Weber and Hardy---we now have to integrate twice, radially and angularly. The proof is done in two papers of the second author \cite{Qi-Sph,Qi-II-G}, and it involves a number of classical formulae for special functions {\rm(}Bessel and Kummer{\rm)}, quite a lot of combinatorial identities {\rm(}the combinatorial theory of Bessel functions therein should be of some independent interest{\rm)}, the method of stationary phase and a little bit of differential equation theory.
\end{rem}

	\section{Proof of Theorem \ref{thm:Waldspurger-Basic} and Theorem \ref{thm:Waldspurger-Twist}}\label{sec: proofs}
	
	\subsection{Proof of  Theorem \ref{thm:Waldspurger-Basic}}
		Let $\pi$ and $\widetilde \pi=\Theta(\pi, \psi^D)$ be as in the theorem. Let $f=\otimes f_\varv$ and $f'=\otimes f'_\varv$ 
		match as in Theorem \ref{thm: I=J}. By Theorem \ref{thm:Jacquet:Main} (1, 3), we have
		\begin{align*}
		I_\pi(f,\psi)=J_{\widetilde \pi}(f', \psi^D).
		\end{align*}
		By Proposition \ref{prop: express of I} and \ref{prop: express of J}, this equality may be explicitly written as
		\begin{align*}
		L(\pi, 1/2)|d_\pi(S, \psi)|^2\prod_{\varv\shskip\in S}I_{\pi_\varv}(f_\varv,\psi_\varv)\prod_{\varv\shskip\notin S}\hat {f}_\varv(\pi_\varv)=|d_{\shskip\widetilde \pi}(S, \psi^D)|^2\prod_{\varv\shskip\in S}J_{\widetilde \pi_\varv}(f_\varv ', \psi_\varv^D)\prod_{\varv\shskip\notin S}\hat {f}'_\varv(\widetilde \pi_\varv).
		\end{align*}
		It then follows from Theorem \ref{thm:Jacquet:Main} (2) and \ref{thm:Local Bessel Identity} that
		\begin{align*}
		L(\pi, 1/2)|d_\pi(S, \psi)|^2=|d_{\shskip\widetilde \pi}(S, \psi^D)|^2\prod_{\varv\shskip \in S} |2D|_\varv\epsilon(\pi_\varv, 1/2)L(\pi_\varv, 1/2).
		\end{align*}
		As $|2D|_\varv=1$ for $\varv\notin S$, we get $\prod_{\varv\shskip\in S}|2D|_\varv=1$. As $\epsilon(\pi_\varv, 1/2)=1$ for $\varv\notin S$ and $\epsilon(\pi, 1/2)=1$, we get $\prod_{\varv\shskip\in S}\epsilon(\pi_\varv, 1/2)=1$. Thus follows the identity in the theorem.

\subsection{Proof of  Theorem \ref{thm:Waldspurger-Twist}}
Let $\pi$ and $\widetilde \pi^\epsilon$ be as in the \S \ref{sec: Waldspurger, twisted}. Assume $D\in F^{\epsilon_{\oldstylenums{0}}}(\pi)$.

We first consider the case $\epsilon_{\oldstylenums{0}} \neq \epsilon$. Then $\epsilon(D, \pi_\varv)=\epsilon_{\oldstylenums{0}, \shskip \varv}\neq \epsilon_\varv$ for some $\varv\in \varSigma$. Thus $\theta(\pi_\varv\otimes \chiup_D, \psi_\varv^D)\neq \widetilde \pi^\epsilon_\varv$. By Theorem \ref{thm: Waldspurger-Local}, $\widetilde \pi_\varv^{\epsilon}$ does not have a nontrivial $\psi_\varv^D$-Whittaker functional, which then implies  that $\widetilde \pi^\epsilon$ does not have a nontrivial $\psi^D$-Whittaker model. Hence $d_{\widetilde \pi^{\epsilon}}(S, \psi^D)= 0$ by definition.

Let $S_D$ be a finite set of places such that $|D|_\varv=1$ if $\varv\notin S_D$. Put $S_1=S\cup S_D$, $S_2=S_1\smallsetminus S$.

We now consider  $\Theta(\pi\otimes \chiup_D, \psi^D)$. First of all, by Theorem \ref{thm:Waldspurger-Global} (5), $\Theta(\pi\otimes \chiup_D, \psi^D) $ is equal to either $\widetilde \pi^{\epsilon_{\oldstylenums{0}}}$ or $0$.

Suppose $\Theta(\pi\otimes \chiup_D, \psi^D)=0$. Then $L(\pi\otimes \chiup_D, 1/2)=0$. By Theorem \ref{thm:Waldspurger-Global} (2, 3), $\widetilde \pi^{\epsilon_{\oldstylenums{0}}}$ does not have a $\psi^D$-Whittaker model; otherwise we would have $\Theta (\widetilde\pi^{\epsilon_{\oldstylenums{0}}}, \psi^D) \otimes \chiup_D = \pi$ and therefore $ \widetilde\pi^{\epsilon_{\oldstylenums{0}}} = \Theta (\pi\otimes \chiup_D, \psi^D) $. Hence $d_{\shskip \widetilde \pi^{\shskip\epsilon_{\oldstylenums{0}}}}(S, \psi^D)=0$. Thus the identity \eqref{4eq: Waldspurger twisted} holds in this case.

Next we consider the case $\Theta(\pi\otimes \chiup_D, \psi^D)=\widetilde \pi^{\epsilon_{\oldstylenums{0}}}$. For $\varv\notin S_1$, $\pi\otimes \chiup_D$ is unramified at $\varv$. Moreover, as $\psi_\varv$ is unramified, the representation $\theta(\pi_\varv\otimes \chiup_D, \psi_\varv)$ is also unramified. We apply Theorem \ref{thm:Waldspurger-Basic} and get
\begin{align*}
|d_{\pi\otimes \chiup_D}(S_1, \psi)|^2L^{S_1}(\pi\otimes \chiup_D, 1/2)=|d_{\shskip \widetilde \pi^{\shskip\epsilon_{\oldstylenums{0}}}}(S_1, \psi^D)|^2.
\end{align*}
According to Lemma 7.1 in \cite{BaruchMao-Global} , we have $d_{\pi\otimes \chiup_D}(S_1, \psi)=d_\pi(S_1, \psi)$ and hence
\begin{align*}
|d_\pi(S_1, \psi)|^2L^{S_1}(\pi\otimes \chiup_D, 1/2)=|d_{\shskip \widetilde \pi^{\shskip\epsilon_{\oldstylenums{0}}}}(S_1, \psi^D)|^2.
\end{align*}
Take vectors $   \varphi=\otimes \,   \varphi_{ \varv} \in   V_{\pi}  $ and $\widetilde \varphi=\otimes \, \widetilde \varphi_{ \varv} \in V_{ \widetilde \pi^{\shskip\epsilon_{\oldstylenums{0}}} }$, with  $  \varphi_{ \varv}=  \varphi_{\ro,\shskip \varv}$ and $\widetilde \varphi_{ \varv}=\widetilde \varphi_{\ro,\shskip \varv}$ for $\varv\notin S$, then by \eqref{2eq: explicit d} and \eqref{2eq: explicit d tilde} we have
\begin{align*}
|d_\pi(S_1, \psi)|^2=|d_\pi(S, \psi)|^2\prod_{\varv\in S_2}e(\varphi_{\ro,\shskip \varv}, \psi_\varv)
\end{align*}
and
\begin{align*}
|d_{\shskip\widetilde \pi^{\shskip\epsilon_{\oldstylenums{0}}}}(S_1, \psi^D)|^2=|d_{\widetilde \pi^{\epsilon_{\oldstylenums{0}}}}(S, \psi^D)|^2\prod_{\varv\shskip \in S_2}e(\widetilde \varphi_{\ro,\shskip \varv}, \psi_{\varv}^D),
\end{align*}
where the constants $e(\varphi_{\ro,\shskip \varv}, \psi_\varv), e(\widetilde \varphi_{\ro,\shskip \varv}, \psi_{\varv}^D)$ are defined by \eqref{2eq: local e(phi)} and \eqref{2eq: local e(tilde phi)}, respectively.
It then follows that
\begin{align}\label{7eq: d pi = d tpi e/e}
|d_\pi(S, \psi)|^2L^{S_1}(\pi\otimes \chiup_D, 1/2)=|d_{\shskip\widetilde \pi^{\shskip\epsilon_{\oldstylenums{0}}}}(S, \psi^D)|^2\prod_{\varv\shskip\in S_2}\frac{e(\widetilde \varphi_{\ro, \varv}, \psi_{\varv}^D)}{e(\varphi_{\ro, \varv}, \psi_\varv)}.
\end{align}
For $\varv\in S_2$, $\pi_\varv$ is unramified and unitary, so $\widetilde \pi^{\epsilon_{\oldstylenums{0}}}_\varv=\theta(\pi_\varv\otimes \chiup_D, \psi^D_\varv)=\theta(\pi_\varv, \psi_\varv)$ by Theorem \ref{thm: Waldspurger-Local} (1).
At this point, we invoke Proposition 8.1 and 8.2 in \cite{BaruchMao-Global} as follows.
\begin{prop*}
	Let $\varv$ be a non-Archimedean place, with
	odd residue characteristic. Suppose that $\psi_{\varv}$ is unramified and that $|D|_{\varv} = 1$ or $q_{\varv}\-$. Let $\pi_{\varv}$ be an unramified unitary representation of $G (F_{\varv})$. Let $  \varphi_{\ro,\shskip \varv}$ and $\widetilde \varphi_{\ro,\shskip \varv}$ be the unramified vectors of $\pi_{\varv}$ and $\widetilde \pi_\varv = \theta(\pi_\varv, \psi_\varv) $ defined as in \S  {\rm\ref{sec: Fourier coefficients, d}}, respectively. Then
	 \begin{align}\label{7eq: quotient of e, unramified}
	 \frac{e(\widetilde \varphi_{\ro,\shskip \varv}, \psi_{\varv}^D)}{e(\varphi_{\ro,\shskip \varv}, \psi_\varv)}= \frac 1 { |D|_\varv L(\pi_\varv\otimes \chiup_D, 1/2)}.
	 \end{align}
\end{prop*}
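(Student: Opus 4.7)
The plan is to compute both $e(\varphi_{\ro,\shskip\varv},\psi_\varv)$ and $e(\widetilde\varphi_{\ro,\shskip\varv},\psi_\varv^D)$ essentially in closed form using the Casselman--Shalika (Shintani) formulas for unramified Whittaker functions, and then read off the stated ratio. Recall from \eqref{2eq: local e(phi)} that $e(\varphi_{\ro,\shskip\varv},\psi_\varv)=\|\varphi_{\ro,\shskip\varv}\|^2/|L_\varv(\varphi_{\ro,\shskip\varv})|^2$, and the Hermitian form \eqref{2eq: Hermitian, local, G} is a pure diagonal integral involving $L_\varv\circ\pi_\varv(t(a))$. Normalizing $L_\varv(\varphi_{\ro,\shskip\varv})=1$, one has the well-known formula
\begin{equation*}
L_\varv(\pi_\varv(t(\varpi^n))\varphi_{\ro,\shskip\varv})=q_\varv^{-n/2}\frac{\alpha^{n+1}-\beta^{n+1}}{\alpha-\beta},\qquad n\ge 0,
\end{equation*}
where $\alpha,\beta$ are the Satake parameters of $\pi_\varv$ (with $\alpha\beta=1$), and $0$ for $n<0$. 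Summing $|\cdot|^2 q_\varv^{-n}$ over $n\ge0$ as a geometric-type series yields a closed form which, after the standard manipulation, equals $\zeta_\varv(1)L(\pi_\varv\times\pi_\varv,1)/\zeta_\varv(2)$, or equivalently $\zeta_\varv(1)\zeta_\varv(2)^{-1}L(\pi_\varv,\mathrm{Ad},1)$.

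For the metaplectic side, I would first observe that for unramified $\widetilde\pi_\varv$ the sum over $\delta\in F_\varv^\times/F_\varv^{\times 2}$ in the Hermitian form collapses: only the square class containing $D$ contributes a nonzero Whittaker functional on the spherical line, since $\widetilde L_\varv^\delta(\widetilde\varphi_{\ro,\shskip\varv})\neq 0$ forces $\psi_\varv^\delta$ to be compatible with the Shimura lift that produced $\pi_\varv=\theta(\widetilde\pi_\varv,\psi_\varv)$. With the Kazhdan--Patterson/Bump--Hoffstein recipe, the spherical $\psi_\varv^D$-Whittaker function of $\widetilde\pi_\varv$ at $\widetilde s(\varpi^n)$ is supported on even $n$ when $|D|_\varv=1$ and on odd $n$ when $|D|_\varv=q_\varv^{-1}$, with values
\begin{equation*}
\widetilde L_\varv^D(\widetilde\pi_\varv(\widetilde s(\varpi^{2k+\nu}))\widetilde\varphi_{\ro,\shskip\varv})=q_\varv^{-k}\cdot\frac{\alpha^{k+1}-\beta^{k+1}}{\alpha-\beta}\cdot c_{\nu,D},
\end{equation*}
where $\nu\in\{0,1\}$ records the parity forced by $|D|_\varv$ and $c_{\nu,D}$ is an explicit constant coming from the Whittaker normalization. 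Squaring and summing $q_\varv^{-2k-\nu}$ as before produces another geometric closed form.

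Taking the ratio, the Rankin--Selberg product in the numerator and denominator of $\|\varphi_{\ro,\shskip\varv}\|^2$ cancels, and the remaining factors reorganize using the Shimura-correspondence identity $L(\pi_\varv\otimes\chi_D,s)=$ Euler factor associated to the parameters $(\alpha,\beta)$ twisted by $\chi_D$ to give precisely
\begin{equation*}
\frac{e(\widetilde\varphi_{\ro,\shskip\varv},\psi_\varv^D)}{e(\varphi_{\ro,\shskip\varv},\psi_\varv)}=\frac{1}{|D|_\varv L(\pi_\varv\otimes\chi_D,1/2)}.
\end{equation*}
The factor $|D|_\varv$ is produced by the shift in parity/normalization between the two cases $|D|_\varv=1$ and $|D|_\varv=q_\varv^{-1}$, together with the factor $|2|_\varv/2=1/2$ in the definition of the metaplectic Hermitian form (recall $\varv$ has odd residue characteristic, so $|2|_\varv=1$), and the single nonvanishing $\delta$-class contributing the other factor of $2$.

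The main obstacle is the careful bookkeeping for the metaplectic spherical Whittaker function in the two sub-cases $|D|_\varv=1$ and $|D|_\varv=q_\varv^{-1}$: one must explicitly verify the parity-of-support statement and the exact value of $c_{\nu,D}$, which in turn requires a clean statement of the unramified Shimura correspondence at the level of Whittaker functions and of the local Hecke algebra isomorphism \eqref{5eq: Hecke isom}. Once this is in place, the rest of the argument is a bookkeeping of geometric series and Euler factors, and no non-trivial input beyond the local results of \cite{BaruchMao-NA} (which guarantee exactly the Whittaker formulas needed) is required.
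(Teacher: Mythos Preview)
The paper does not prove this proposition at all: it is quoted as Propositions~8.1 and~8.2 of \cite{BaruchMao-Global}, and the paper simply invokes it. Your proposed route---compute both $e$-constants by squaring and integrating the spherical Whittaker functions via Casselman--Shalika, then take the ratio---is precisely the method Baruch and Mao use in \cite[\S 8]{BaruchMao-Global}, so in outline you are reproducing the source argument rather than offering an alternative.

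There is, however, a genuine error in your sketch. Your claim that the sum over $\delta\in F_\varv^\times/F_\varv^{\times\,2}$ in the metaplectic Hermitian form ``collapses'' to the single square class of $D$ is false. Since $\widetilde\pi_\varv$ is an unramified principal series, in particular $\pi_\varv\notin P_{0,\shskip\varv}$, Theorem~\ref{thm: Waldspurger-Local}\,(1) gives $\theta(\pi_\varv\otimes\chiup_\delta,\psi_\varv^\delta)=\widetilde\pi_\varv$ for \emph{every} $\delta$, and hence by part~(3) of that theorem $\widetilde\pi_\varv$ has a nontrivial $\psi_\varv^\delta$-Whittaker model for each of the four square classes. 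The spherical vector is nonzero in each of these models, and all four $\delta$-terms contribute to $\|\widetilde\varphi_{\ro,\shskip\varv}\|^2$. Your justification (``$\widetilde L_\varv^\delta(\widetilde\varphi_{\ro,\shskip\varv})\neq 0$ forces $\psi_\varv^\delta$ to be compatible with the Shimura lift'') would only bite for discrete series, which is exactly the case excluded here.

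In the actual computation in \cite[\S 8]{BaruchMao-Global} one writes out the metaplectic spherical Whittaker values for all four classes $\delta\in\{1,u,\varpi,u\varpi\}$, integrates and sums; the $D$-dependence of $e(\widetilde\varphi_{\ro,\shskip\varv},\psi_\varv^D)$ enters only through the denominator $|\widetilde L_\varv^D(\widetilde\varphi_{\ro,\shskip\varv})|^2$, while the numerator $\|\widetilde\varphi_{\ro,\shskip\varv}\|^2$ is the full $\delta$-sum. Once this is done correctly, your remaining plan (match geometric series, identify the twisted Euler factor) goes through, but the bookkeeping is heavier than your sketch indicates.
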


As $D$ is square-free, we may apply the formula \eqref{7eq: quotient of e, unramified} for $\varv \in S_2$.
Since $S_1=S\cup S_2$, it follows from \eqref{7eq: d pi = d tpi e/e} and \eqref{7eq: quotient of e, unramified} that
\begin{align}
|d_\pi(S, \psi)|^2L^{S}(\pi\otimes \chiup_D, 1/2)=|d_{\widetilde \pi^{\epsilon_{\oldstylenums{0}}}}(S, \psi^D)|^2 / |D|_{S_2}.
\end{align}
Finally note that $|D|_\varv=1$ for $\varv\notin S_1$ and hence $|D|_{S_1} = 1$. So $1/|D|_{S_2}=|D|_{S}$. We get \eqref{4eq: Waldspurger twisted} and the theorem.

	\section{Metaplectic Ramanujan Conjecture and Lindel\"of Hypothesis}\label{sec: Ramanujan-Lindelof}

In this final section, we discuss the connection between the metaplectic Ramanujan conjecture and the Lindel\"of hypothesis.

\subsection{Statement of conjectures}

Let $\widetilde \pi \in \widetilde A_{0\shskip 0}$. 
Suppose that $S$ is a finite set of places of $F$ containing all bad places along with places $\varv$ where $\widetilde \pi_\varv$ is not unramified. For $S_0\subset S$ and  $\widetilde \varphi\in V_{\widetilde \pi }^S$, 
 we define
\begin{align}
d_{\shskip\widetilde \pi}(\widetilde \varphi, S_0, \psi^D)=\frac{\big|\widetilde W^D_{\widetilde \varphi}(1)\big|}{\|\widetilde \varphi\|}\prod_{\varv\shskip \in S_0}\frac{\|\widetilde \varphi_\varv\|_D}{|\widetilde L_\varv^D(\widetilde \varphi_\varv)|}.
\end{align}
This constant is well defined and independent on the choice of the $\psi^D$-Whittaker functional ${\widetilde L_\varv^D}$. Moreover, in view of \eqref{2eq: explicit d tilde}, we also have
\begin{align}\label{8eq:tilde d}
d_{\shskip\widetilde \pi}(\widetilde \varphi, S_0, \psi^D)=d_{\shskip\widetilde \pi}(S, \psi^D)\prod_{\varv \shskip\in S \smallsetminus S_0}\frac{|\widetilde L_\varv^D(\widetilde \varphi_\varv)|}{\|\widetilde \varphi_\varv\|_D}.
\end{align}

Recall that $S_\infty$ is the set of Archimedean places of $F$ and that $|D|_{S_\infty}=\prod_{\varv \shskip \in S_\infty}|D|_\varv$ for $D\in F^\times$.

We can now state the metaplectic Ramanujan conjecture as follows.

\begin{conj}[Metaplectic Ramanujan conjecture]\label{conj: Ramanujan}
	Let $\widetilde \pi $ be an irreducible cuspidal automorphic representation of $\widetilde{\SL}_2 (\BA)$ in $\widetilde A_{0\shskip 0}$. Let $\widetilde{\varphi}$ be a cusp form in $V_{\widetilde{\pi} }$. Let $D$ be a square-free integer in $F^{\times}$. For all $\alpha>0$, we have
	\begin{align}\label{8eq: ineq Ramanujan}
	\big|d_{\widetilde \pi}(\widetilde \varphi, S_\infty, \psi^D) \big|\lll |D|_{S_\infty}^{\alpha-\frac 1 2}
	\end{align}
	as $|D|_{S_\infty}\to \infty$, where the implied constant depends only on $\widetilde \pi, \widetilde \varphi$ and $\alpha$. Equivalently, for all $\alpha>0$, we have
	\begin{align}\label{8eq: ineq Ramanujan2}
	\big|\widetilde W^D_{\widetilde \varphi}(1)\big| \prod_{ \varv \shskip \in S_{\infty} }  e (\widetilde{\varphi}_{\varv}, \psi_{\varv}^D )^{\frac 1 2}  \lll |D|_{S_\infty}^{\alpha-\frac 1 2}
	\end{align}
	as $|D|_{S_\infty}\to \infty$. 
\end{conj}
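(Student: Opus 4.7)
The statement combines a formal equivalence of two displays with a Ramanujan-type bound that is open in general; my plan is to dispatch the equivalence and then reduce the bound conditionally to the Lindel\"of hypothesis via the Waldspurger formula.

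For the equivalence of \eqref{8eq: ineq Ramanujan} and \eqref{8eq: ineq Ramanujan2}, I unpack the definitions from \S\ref{sec: Fourier coefficients, d}. Using $e(\widetilde\varphi_\varv, \psi_\varv^D) = \|\widetilde\varphi_\varv\|_D^2 / |\widetilde L_\varv^D(\widetilde\varphi_\varv)|^2$, one has
\[
d_{\widetilde\pi}(\widetilde\varphi, S_\infty, \psi^D) = \frac{|\widetilde W^D_{\widetilde\varphi}(1)|}{\|\widetilde\varphi\|} \prod_{\varv \in S_\infty} e(\widetilde\varphi_\varv, \psi_\varv^D)^{1/2}.
\]
Since $\|\widetilde\varphi\|$ depends only on the fixed cusp form and not on $D$, it is absorbed into the implied constant, so the two displays become identical assertions.

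For the bound, enlarge $S$ to contain $S_\infty$ together with all bad places and places of ramification for $\pi$ and $\widetilde\pi$, and write $\widetilde\pi = \widetilde\pi^{\epsilon_0}$. Theorem \ref{thm:Waldspurger-Twist} applied to a square-free $D \in F^{\epsilon_0}(\pi)$ gives
\[
|d_{\widetilde\pi}(S, \psi^D)|^2 = |d_\pi(S, \psi)|^2 \, L^S(\pi \otimes \chiup_D, 1/2) / |D|_S.
\]
Combining with \eqref{8eq:tilde d} passes from $d_{\widetilde\pi}(S, \psi^D)$ to $d_{\widetilde\pi}(\widetilde\varphi, S_\infty, \psi^D)$ at the cost of a product of local ratios $|\widetilde L_\varv^D(\widetilde\varphi_\varv)|/\|\widetilde\varphi_\varv\|_D$ over the non-Archimedean $\varv \in S \setminus S_\infty$. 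For square-free $D$ one has $|D|_\varv \in \{1, q_\varv^{-1}\}$ at each odd place, so these local ratios and the quotient $|D|_S/|D|_{S_\infty}$ remain bounded uniformly in $D$, and I obtain
\[
|d_{\widetilde\pi}(\widetilde\varphi, S_\infty, \psi^D)|^2 \lll L^S(\pi \otimes \chiup_D, 1/2) \big/ |D|_{S_\infty}.
\]
The Lindel\"of hypothesis $L(\pi \otimes \chiup_D, 1/2) \lll |D|_{S_\infty}^{\epsilon}$ for every $\epsilon > 0$ (equivalent, via polynomial control of $C(\chiup_D)$ in $|D|_{S_\infty}$, to Lindel\"of in the conductor) would then yield \eqref{8eq: ineq Ramanujan} for every $\alpha > 0$.

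The main obstacle is of course Lindel\"of itself, which remains a central open problem. The unconditional substitute is a subconvexity estimate of the shape \eqref{1eq: subconvexity}: applied through the above reduction it produces only the partial bound $\alpha > \widetilde\theta = 3/16 + \theta/8$ stated in Theorem 1.2, and any further progress on the full conjecture is tied directly to progress on central-value Lindel\"of for $\GL_2 \times \GL_1$ twists by quadratic characters.
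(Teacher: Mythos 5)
The statement is a conjecture, so the only provable content is the equivalence of the two displays, which you correctly reduce to the definitional identity $d_{\widetilde\pi}(\widetilde\varphi,S_\infty,\psi^D)=\|\widetilde\varphi\|^{-1}\big|\widetilde W^D_{\widetilde\varphi}(1)\big|\prod_{\varv\in S_\infty}e(\widetilde\varphi_\varv,\psi_\varv^D)^{1/2}$ with the $D$-independent factor $\|\widetilde\varphi\|$ absorbed into the implied constant. Your conditional reduction to the Lindel\"of hypothesis via Theorem \ref{thm:Waldspurger-Twist}, together with the boundedness of the finitely many nonzero values of the local factors at $\varv\in S\smallsetminus S_\infty$ for square-free $D$, is precisely the paper's own route in Theorems \ref{thm: R-L equivalence} and \ref{thm: Ramanujan}, so the approaches coincide.
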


In the classical language, the constant $d_{\widetilde \pi}(\widetilde \varphi, S_\infty, \psi^D)$ is indeed a renormalized $D$-th Fourier coefficient of the cusp form $\widetilde \varphi$. Details may be found for example in \cite[III]{Waldspurger-Formula}, \cite[\S 9]{BaruchMao-Global} and \cite[\S 2]{Baruch-Mao-KZ-Maass}.

Next we state a special case of the Lindel\"of hypothesis, which is a conjecture on the bound of central value of twisted $L$-functions for $\PGL_2 $.

\begin{conj}[Lindel\"of hypothesis]\label{conj: Lindelof}
	Let $\pi$ be an irreducible cuspidal automorphic representation of $\PGL_2 (\BA)$.
     Let $D$ be a square-free integer in $F^{\times}$. For all $\beta>0$, we have
     \begin{align}\label{8eq: ineq Lindelof}
     \big|L^{S_\infty}(\pi\otimes \chiup_D, 1/2) \big|\lll |D|_{S_\infty}^\beta
     \end{align}
     as $|D|_{S_\infty} \to  \infty$, where the implied constant depends only on $\pi$ and $\beta$.	
\end{conj}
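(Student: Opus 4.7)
The plan is to deduce Conjecture \ref{conj: Lindelof} from the metaplectic Ramanujan Conjecture \ref{conj: Ramanujan}, running the twisted Waldspurger formula (Theorem \ref{thm:Waldspurger-Twist}) in the reverse direction to the argument previously sketched for \eqref{1eq: ineq Ramanujan}, i.e.\ from Whittaker--Fourier coefficients of $\widetilde{\SL}_2$-cusp forms to central twisted $L$-values for $\PGL_2$-forms. Fix $\pi$; we may assume $\pi \in A_{0, \shskip i}$, since otherwise $L(\pi \otimes \chiup_D, 1/2) = 0$ for every $D$ and \eqref{8eq: ineq Lindelof} is vacuous. Let $\{\tpi^{\shskip\epsilon}\}$ be the metaplectic packet over $\pi$ under the Shimura map $S_{\psi}$ of Theorem \ref{thm:Waldspurger-Global}, and take $S$ to contain $S_{\infty}$, all bad places, and every place where $\pi$ is ramified. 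For a square-free integer $D \in F^{\times}$, select the unique $\epsilon_{\oldstylenums{0}}$ with $D \in F^{\epsilon_{\oldstylenums{0}}}(\pi)$; if $L(\pi \otimes \chiup_D, 1/2) = 0$ there is nothing to prove, and otherwise Theorem \ref{thm:Waldspurger-Global}(5) gives $\tpi^{\shskip\epsilon_{\oldstylenums{0}}} = \Theta(\pi \otimes \chiup_D, \psi^D) \neq 0$, admitting a $\psi^D$-Whittaker model.

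The central step is to rearrange \eqref{4eq: Waldspurger twisted} as
\begin{align*}
L^S(\pi \otimes \chiup_D, 1/2) = |D|_S \cdot \big|d_{\shskip\tpi^{\shskip\epsilon_{\oldstylenums{0}}}}(S, \psi^D)\big|^2 / |d_\pi(S, \psi)|^2,
\end{align*}
fix a pure tensor $\widetilde\varphi \in V_{\tpi^{\shskip\epsilon_{\oldstylenums{0}}}}^S$ with nonvanishing Whittaker values at $\varv \in S$, and invert \eqref{8eq:tilde d} with $S_0 = S_{\infty}$ to write $d_{\shskip\tpi^{\shskip\epsilon_{\oldstylenums{0}}}}(S, \psi^D)$ as $d_{\shskip\tpi^{\shskip\epsilon_{\oldstylenums{0}}}}(\widetilde\varphi, S_{\infty}, \psi^D)$ times the finite product $\prod_{\varv \shskip \in S \smallsetminus S_{\infty}} \|\widetilde\varphi_{\varv}\|_D / |\widetilde L_{\varv}^D(\widetilde\varphi_{\varv})|$. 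Conjecture \ref{conj: Ramanujan} bounds the Archimedean factor by $|D|_{S_\infty}^{\alpha - 1/2}$ for any $\alpha > 0$; granting that the finite-place ratios are $O_{\pi, \widetilde\varphi}(1)$ uniformly over square-free $D$, we conclude
\begin{align*}
L^S(\pi \otimes \chiup_D, 1/2) \lll |D|_S \cdot |D|_{S_\infty}^{2\alpha - 1} \leqslant |D|_{S_\infty}^{2\alpha},
\end{align*}
the last inequality because $|D|_{\varv} \leqslant 1$ at each non-Archimedean $\varv$ for square-free $D$. Reinstating the finitely many Euler factors $L(\pi_{\varv} \otimes \chiup_D, 1/2)$ at $\varv \in S \smallsetminus S_{\infty}$ to convert $L^S$ to $L^{S_{\infty}}$ multiplies by an $O_\pi(1)$-factor, since $\chiup_D$ has conductor in a fixed finite set of local square classes as $D$ ranges over square-free integers. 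Taking $\beta = 2\alpha$ arbitrarily small then yields \eqref{8eq: ineq Lindelof}.

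The principal obstacle is the uniform boundedness in $D$ of the finite-place ratios $\|\widetilde\varphi_{\varv}\|_D / |\widetilde L_{\varv}^D(\widetilde\varphi_{\varv})|$ for $\varv \in S \smallsetminus S_{\infty}$. The key observation is that for square-free $D$ the square class $[D] \in F_{\varv}^{\times} / F_{\varv}^{\times 2}$ takes only finitely many values: at odd $\varv$ one has $|D|_{\varv} \in \{1, q_{\varv}^{-1}\}$, while the dyadic and ramified places are already absorbed into $S$. Consequently both the $\psi_{\varv}^D$-Whittaker functional and the $\psi_{\varv}^D$-induced Hermitian norm on $V_{\tpi^{\shskip\epsilon_{\oldstylenums{0}}}_{\varv}}$ vary through only finitely many configurations, which collapses the purported uniform bound to a finiteness statement. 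The required local analysis is the same one that underpins Propositions 8.1 and 8.2 of \cite{BaruchMao-Global}, whose spherical version appears in the proof of Theorem \ref{thm:Waldspurger-Twist} in \S \ref{sec: proofs}.
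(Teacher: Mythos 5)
The fundamental problem is not with the mechanics of your argument but with its premise: the statement you are asked to prove is labelled a \emph{Conjecture} in the paper precisely because it is open, and the paper never proves it. What you have written is a derivation of Conjecture \ref{conj: Lindelof} \emph{from} Conjecture \ref{conj: Ramanujan}, but the metaplectic Ramanujan conjecture is itself unproven --- and, by the paper's own Theorem \ref{thm: R-L equivalence}, it is \emph{equivalent} to the Lindel\"of statement you are targeting. So your argument reduces one open conjecture to an equivalent open conjecture; it cannot serve as a proof of \eqref{8eq: ineq Lindelof}. (The only unconditional result in this direction available in the paper is Theorem \ref{thm: Ramanujan}, which gives \eqref{8eq: ineq Ramanujan} for $\alpha > \frac{3}{16} + \frac{1}{8}\theta$ by importing the subconvexity bound \eqref{1eq: subconvexity}; feeding that back through your implication would only ever recover \eqref{8eq: ineq Lindelof} for $\beta > \frac{3}{8} + \frac{1}{4}\theta$, far from arbitrary $\beta > 0$.)

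That said, the implication you actually establish --- Ramanujan implies Lindel\"of with $\beta = 2\alpha$ --- is correct and is essentially identical to the paper's proof of the ``only if'' half of Theorem \ref{thm: R-L equivalence}: you rewrite \eqref{4eq: Waldspurger twisted} in the form of the paper's identity \eqref{8eq: identity}, isolate the Archimedean factor $d_{\shskip\widetilde \pi}(\widetilde \varphi, S_\infty, \psi^D)$ via \eqref{8eq:tilde d}, and control the finite-place ratios $\|\widetilde\varphi_{\varv}\|_D^2/|\widetilde L_{\varv}^D(\widetilde\varphi_{\varv})|^2$ by the finiteness of local square classes of square-free $D$, which is exactly the content of \cite[Lemma 7.2]{BaruchMao-Global} invoked in \eqref{8eq: C = 1}. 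You correctly identify that uniform boundedness as the principal local obstacle, and your passage from $L^S$ to $L^{S_\infty}$ and the estimate $|D|_S \leqslant |D|_{S_\infty}$ are both fine. If the intended statement had been that half of Theorem \ref{thm: R-L equivalence}, your write-up would be an accurate reproduction of the paper's argument; as a proof of Conjecture \ref{conj: Lindelof} itself, it is conditional on an unproven and equivalent hypothesis, and hence does not prove the statement.
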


\subsection{The Ramanujan-Lindel\"of equivalence}

\begin{thm}\label{thm: R-L equivalence}
	The inequality {\rm\eqref{8eq: ineq Ramanujan}} holds for $\alpha > 0$ {\rm(}and for all
	$\widetilde{\pi}$ and $\widetilde{\varphi}$ as in Conjecture {\rm\ref{conj: Ramanujan})} if and only if the inequality {\rm\eqref{8eq: ineq Lindelof}} holds for $\beta = 2 \alpha > 0$ {\rm(}and for all $\pi$ as in Conjecture {\rm\ref{conj: Lindelof})}.
	In particular, Conjecture {\rm\ref{conj: Ramanujan}}
	is equivalent to Conjecture {\rm\ref{conj: Lindelof}}.
\end{thm}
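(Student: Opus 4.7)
The plan is to take Theorem~\ref{thm:Waldspurger-Twist} and the identity \eqref{8eq:tilde d} and combine them into a precise dictionary between the normalised Fourier coefficient $d_{\widetilde\pi}(\widetilde\varphi, S_\infty, \psi^D)$ and the partial central value $L^{S_\infty}(\pi\otimes\chiup_D,1/2)/|D|_{S_\infty}$. Squaring \eqref{8eq:tilde d}, substituting \eqref{4eq: Waldspurger twisted}, and separating the places in $S_\infty$ from those in $S\smallsetminus S_\infty$ in both $|D|_S$ and $L^S(\pi\otimes\chiup_D, 1/2)$, one obtains an identity of the form
\begin{align*}
|d_{\widetilde\pi}(\widetilde\varphi, S_\infty, \psi^D)|^2 \;=\; \frac{|d_\pi(S,\psi)|^2\, L^{S_\infty}(\pi\otimes\chiup_D,1/2)}{|D|_{S_\infty}}\;\mathcal E(\widetilde\varphi, D),
\end{align*}
where
\begin{align*}
\mathcal E(\widetilde\varphi, D)\;=\;\prod_{\varv\shskip\in S\smallsetminus S_\infty}\frac{|\widetilde L_\varv^D(\widetilde\varphi_\varv)|^2}{|D|_\varv \, L(\pi_\varv\otimes\chiup_D,1/2)\,\|\widetilde\varphi_\varv\|_D^2}.
\end{align*}
The entire argument then reduces to showing that $\mathcal E(\widetilde\varphi, D)$ stays between two positive constants (depending only on $\widetilde\pi$ and $\widetilde\varphi$) as $D$ ranges over square-free integers in $F^\times$ with both sides nonzero.

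For Lindel\"of $\Rightarrow$ Ramanujan, fix $\widetilde\pi\in\widetilde A_{0\shskip 0}$ and $\widetilde\varphi\in V_{\widetilde\pi}$, and set $\pi=S_\psi(\widetilde\pi)\in A_{0,\shskip i}$. If $\widetilde\pi$ admits no nontrivial $\psi^D$-Whittaker model then $\widetilde W_{\widetilde\varphi}^D\equiv 0$ and \eqref{8eq: ineq Ramanujan} is trivial; otherwise Theorem~\ref{thm:Waldspurger-Global}~(2,~3,~5) forces $\widetilde\pi=\Theta(\pi\otimes\chiup_D,\psi^D)$, which puts us in the situation of Theorem~\ref{thm:Waldspurger-Twist}. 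Plugging Conjecture~\ref{conj: Lindelof} (with $\beta=2\alpha$) into the displayed identity and taking square roots yields \eqref{8eq: ineq Ramanujan}.

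For Ramanujan $\Rightarrow$ Lindel\"of, the case $\pi\notin A_{0,\shskip i}$ is vacuous, so suppose $\pi\in A_{0,\shskip i}$. The packet $S_\psi^{-1}(\pi)=\{\widetilde\pi^\epsilon\}$ indexed in Theorem~\ref{thm:Waldspurger-Global}~(4) is finite; for each $\epsilon$ I would fix a cusp form $\widetilde\varphi^\epsilon\in V_{\widetilde\pi^\epsilon}$ whose local components at every $\varv\in S$ are chosen to satisfy $\widetilde L_\varv^D(\widetilde\varphi^\epsilon_\varv)\neq 0$ simultaneously for all square-free $D$ (this is a generic condition, since only finitely many Whittaker functionals arise at each place). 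For any square-free $D\in F^\epsilon(\pi)$ with $L(\pi\otimes\chiup_D,1/2)\neq 0$, Theorem~\ref{thm:Waldspurger-Global}~(5) gives $\widetilde\pi^\epsilon=\Theta(\pi\otimes\chiup_D,\psi^D)$; inverting the displayed identity with $(\widetilde\pi^\epsilon,\widetilde\varphi^\epsilon)$ and applying Conjecture~\ref{conj: Ramanujan} delivers $L^{S_\infty}(\pi\otimes\chiup_D,1/2)\lll|D|_{S_\infty}^{2\alpha}$. Since the packet is finite, taking the maximum over $\epsilon$ gives \eqref{8eq: ineq Lindelof} with $\beta=2\alpha$.

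The only substantial technical obstacle is the uniform two-sided boundedness of $\mathcal E(\widetilde\varphi,D)$ on its support (together with the construction of the $\widetilde\varphi^\epsilon$). At each non-Archimedean $\varv\in S\smallsetminus S_\infty$, square-freeness of $D$ pins $|D|_\varv$ to a finite set ($\{1,q_\varv^{-1}\}$ at odd places), pins the square class of $D$ in $F_\varv^\times/F_\varv^{\times 2}$ to a finite set, and bounds the conductor of $\psi_\varv^D$. Consequently the local $L$-factor $L(\pi_\varv\otimes\chiup_D,1/2)$, the $\psi_\varv^D$-Whittaker functional $\widetilde L_\varv^D$, and the local Hermitian norm $\|\cdot\|_D$ each realise only finitely many possibilities as $D$ varies. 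This finiteness yields both the uniform bounds on $\mathcal E(\widetilde\varphi,D)$ and the existence of vectors $\widetilde\varphi^\epsilon$ transverse to the finitely many hyperplanes $\{\widetilde v : \widetilde L_\varv^D(\widetilde v)=0\}$ that arise.
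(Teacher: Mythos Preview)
Your proposal is correct and follows essentially the same route as the paper: you derive the key identity relating $|d_{\widetilde\pi}(\widetilde\varphi,S_\infty,\psi^D)|^2$ to $L^{S_\infty}(\pi\otimes\chiup_D,1/2)/|D|_{S_\infty}$ times a product of local factors over $S\smallsetminus S_\infty$ (your $\mathcal E(\widetilde\varphi,D)$ is the reciprocal of the paper's $\prod_{\varv} C_\varv$), reduce both implications to the two-sided boundedness of that local product, and justify boundedness by the finiteness of local data for square-free $D$. Two minor remarks: the paper appeals to \cite[Lemma~7.2]{BaruchMao-Global} for the finiteness of nonzero values of $\|\widetilde\varphi_\varv\|_D^2/|\widetilde L_\varv^D(\widetilde\varphi_\varv)|^2$, whereas you sketch a direct argument via finiteness of square classes --- your reasoning is correct in spirit but that lemma packages some bookkeeping you would otherwise have to write out; and you are in fact more explicit than the paper about the need to choose $\widetilde\varphi^\epsilon$ so that $\widetilde L_\varv^D(\widetilde\varphi^\epsilon_\varv)\neq 0$ for all relevant $D$, a point the paper leaves implicit.
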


\begin{proof}
	Let $ \pi \in A_{0,\shskip i} $ and $\widetilde{\pi} \in  \widetilde A_{0\shskip 0}$ be given such that $ \pi = S_{\psi } (\widetilde {\pi})$. 
	Restrict $D$ to range in the set of square-free integers in $F^{\times}$ with $    \Theta (\pi\otimes \chiup_D, \psi^D) = \widetilde{\pi} $ so that  $ d_{\shskip\widetilde \pi}(S, \psi^D) $ and $ L(\pi\otimes \chiup_D, 1/2) $ are nonzero and that  the identity \eqref{4eq: Waldspurger twisted} in  Theorem \ref{thm:Waldspurger-Twist} is nontrivial.
	We then rewrite the identity (\ref{4eq: Waldspurger twisted}) as follows,
	\begin{align}\label{8eq: identity}
	\big|d_{\shskip\widetilde \pi}(\widetilde \varphi, S_\infty, \psi^D)  \big|^2 |D|_{S_{\infty}} \prod_{\varv \shskip\in S \smallsetminus S_\infty} C_{\varv}  (D, \pi_\varv, \widetilde{\varphi}_\varv)  =\big|d_\pi(S, \psi)\big|^2 L^{ S_{\infty}}(\pi\otimes \chiup_D, 1/2),
	\end{align}
	with
	\begin{align}\label{8eq: Cv}
C_{\varv} (D, \pi_\varv, \widetilde{\varphi}_\varv) =   |D|_{\varv} L (\pi_{\varv}\otimes \chiup_D, 1/2) \cdot 	 {\|\widetilde \varphi_\varv\|_D^2} / {|\widetilde L_\varv^D(\widetilde \varphi_\varv)|^2}.
	\end{align} Here we have used the identity \eqref{8eq:tilde d}.
	
	For each $\varv \in S \smallsetminus S_{\infty}$, since $D$ is square-free, it is clear that $|D|_{\varv}$ and $L (\pi_\varv\otimes \chiup_D, 1/2)$ take only finitely many possible nonzero values, depending on $\pi_\varv$ (in the latter case). It is also proven in \cite[Lemma 7.2]{BaruchMao-Global} that there
	are only finitely many possible nonzero values of $ {\|\widetilde \varphi_\varv\|_D^2} / {|\widetilde L_\varv^D(\widetilde \varphi_\varv)|^2}$ when $D$ varying in the set of square-free integers. Combining these, we have
	\begin{align}\label{8eq: C = 1}
	  |C_{\varv} (D, \pi_\varv, \widetilde{\varphi}_\varv) | \asymp_{ \, \pi_\varv, \shskip \widetilde{\varphi}_\varv } 1.
	\end{align}
	The asymptotic notation here means that there are positive constants $A = A (\pi_\varv, \widetilde{\varphi}_\varv)$ and $B = B (\pi_\varv, \widetilde{\varphi}_\varv)$, depending only on $ \pi_\varv $ and $ \widetilde{\varphi}_\varv$, such that  $B< |C_{\varv} (D, \pi_\varv, \widetilde{\varphi}_\varv) | < A$.
	
Now we turn to the proof of our theorem.

Assume first that (\ref{8eq: ineq Ramanujan}) holds for   $\alpha>0$. Given $\pi$, let $S$ be a fixed finite set of places so that (\ref{4eq: Waldspurger twisted}) holds. For each $\widetilde{\pi}^{\epsilon} \in S^{-1}_{\psi} (\pi)$, fix a cusp form $\widetilde{\varphi}^{\epsilon} \in V_{\widetilde{\pi}^{\epsilon}}$. Applying (\ref{8eq: ineq Ramanujan}) for these $\widetilde{\pi}^{\epsilon} $ and $ \widetilde{\varphi}^{\epsilon} $, along with \eqref{8eq: identity}-\eqref{8eq: C = 1}, we get
\begin{align*}
L^{ S_{\infty}}(\pi\otimes \chiup_D, 1/2) \lll \big|d_{\shskip\widetilde \pi}(\widetilde \varphi, S_\infty, \psi^D)  \big|^2 |D|_{S_{\infty}} \lll |D|_{S_{\infty}}^{2 \alpha}, \hskip 15 pt |D|_{S_\infty} \to  \infty,
\end{align*}
for all square-free $D$ in $F^{\epsilon} (\pi) $ (note that if  $D \in F^{\epsilon} (\pi) $ but  $    \Theta (\pi\otimes \chiup_D, \psi^D) = 0 $ then $ L (\pi\otimes \chiup_D, 1/2) = 0$ and the inequality above holds trivially).  This proves \eqref{8eq: ineq Lindelof} with $\beta = 2 \alpha$. Note that the implied constant in each step depends (ultimately) only on $\pi$ and $\alpha$.

The reverse direction may be proven similarly.  Assume that (\ref{8eq: ineq Lindelof}) holds for   $\beta=2\alpha>0$. Given $\widetilde{\pi}$, let $\pi = S_{\psi} (\widetilde{\pi})$ and let  $S$ be fixed as before.  Assume $D$ is such that $    \Theta (\pi\otimes \chiup_D, \psi^D) = \widetilde{\pi} $, as otherwise $ d_{\shskip\widetilde \pi}(S, \psi^D) = 0 $ and hence $ d_{\shskip\widetilde \pi}(\widetilde \varphi, S_\infty, \psi^D) = 0 $. Applying (\ref{8eq: ineq Lindelof}) for $\pi$, along with \eqref{8eq: identity}-\eqref{8eq: C = 1}, we get
\begin{align*}
	\big|d_{\widetilde \pi}(\widetilde \varphi, S_\infty, \psi^D) \big|\lll \big|L^{ S_{\infty}}(\pi\otimes \chiup_D, 1/2) |D|_{S_{\infty}}^{-1} \big|^{\frac 1 2} \lll |D|_{S_{\infty}}^{\alpha - \frac 1 2}, \hskip 15 pt |D|_{S_\infty} \to  \infty,
	\end{align*}
	as desired. The implied constant depends only on $\widetilde \pi, \widetilde \varphi$ and $\alpha$.
\end{proof}

\subsection{Nontrivial bound toward the metaplectic Ramanujan conjecture}

In view of \eqref{1eq: subconvexity} and \eqref{1eq: subconvexity, D}, the inequality \eqref{8eq: ineq Lindelof} holds for any $\beta > 2 \widetilde{\theta} = \text{\Large$\frac 3 {8} \hskip -1 pt $} + \text{\Large$ \hskip -1 pt \frac {1} {4}$}\theta$. As a consequence of Theorem \ref{thm: R-L equivalence}, we have the following theorem.

\begin{thm}\label{thm: Ramanujan}
	Let $\theta$ be any exponent toward the Ramanujan-Petersson conjecture   for $\GL_2 (\BA)$.
	Then the bound in {\rm\eqref{8eq: ineq Ramanujan}} holds for any $\alpha > \widetilde{\theta} = \text{\Large$\frac 3 {16} \hskip -1 pt $} + \text{\Large$ \hskip -1 pt \frac {1} {8}$}\theta$.
\end{thm}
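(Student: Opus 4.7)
The plan is to combine Theorem~\ref{thm: R-L equivalence} (the Ramanujan--Lindel\"of equivalence) with the known $\GL_2 \times \GL_1$ subconvexity bound \eqref{1eq: subconvexity} of Michel--Venkatesh, Wu, and Maga. The argument is essentially immediate once these two ingredients are in place; the only bookkeeping lies in translating from ``analytic conductor of a Hecke character'' to ``$|D|_{S_\infty}$ for a square-free $D$'', and in discarding the Archimedean local $L$-factors when passing to the partial $L$-function $L^{S_\infty}$.

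First I would recall \eqref{1eq: subconvexity}: for every $\beta > 2\widetilde{\theta} = \text{\Large$\frac 3 {8}$} + \text{\Large$\frac {1}{4}$}\theta$ and every Hecke character $\chiup$ of $\BA^\times$,
\begin{equation*}
L(\pi \otimes \chiup, 1/2) \lll C(\chiup)^{\beta},
\end{equation*}
with implied constant depending on $\pi$ and $\beta$. Next, I would specialize to $\chiup = \chiup_D$ with $D$ a square-free integer in $F^\times$ in the sense of \S \ref{sec: Waldspurger, twisted}. At each non-Archimedean place $\varv$, the condition $|4|_\varv q_\varv^{-1} \leqslant |D|_\varv \leqslant 1$ bounds the local conductor of $\chiup_{D,\shskip\varv}$ in terms of $|D|_\varv$ and $|2|_\varv$ only, while at each Archimedean place $\varv$ the character $\chiup_{D,\shskip\varv}$ depends only on the sign or phase of $D$, so there are finitely many possibilities. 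Consequently $C(\chiup_D) \asymp_F |D|_{S_\infty}$, yielding \eqref{1eq: subconvexity, D}:
\begin{equation*}
L(\pi \otimes \chiup_D, 1/2) \lll |D|_{S_\infty}^{\beta} \qquad \text{as } |D|_{S_\infty} \to \infty.
\end{equation*}

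The partial $L$-function $L^{S_\infty}(\pi \otimes \chiup_D, 1/2)$ differs from $L(\pi \otimes \chiup_D, 1/2)$ only by the finitely many Archimedean local factors $L(\pi_\varv \otimes \chiup_{D,\shskip\varv}, 1/2)$; since $\chiup_{D,\shskip\varv}$ ranges over a finite set at each $\varv \in S_\infty$ as $D$ varies, these factors are $O_\pi(1)$, and hence the inequality \eqref{8eq: ineq Lindelof} of Conjecture~\ref{conj: Lindelof} holds for any $\beta > 2\widetilde{\theta}$.

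Finally, I would apply Theorem~\ref{thm: R-L equivalence}: its proof shows, for a fixed pair $(\alpha,\beta) = (\alpha, 2\alpha)$, that validity of the Lindel\"of-type inequality \eqref{8eq: ineq Lindelof} for $\beta$ implies validity of the Ramanujan-type inequality \eqref{8eq: ineq Ramanujan} for $\alpha$. Taking any $\alpha > \widetilde{\theta}$, the corresponding $\beta = 2\alpha > 2\widetilde{\theta}$ satisfies the hypothesis of the previous step, and we conclude
\begin{equation*}
\big|d_{\widetilde{\pi}}(\widetilde{\varphi}, S_\infty, \psi^D)\big| \lll |D|_{S_\infty}^{\alpha - \frac{1}{2}}
\end{equation*}
as $|D|_{S_\infty} \to \infty$, as required. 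There is no real obstacle in this argument; the substantive work has already been carried out in Theorem~\ref{thm:Waldspurger-Twist} (which powers Theorem~\ref{thm: R-L equivalence}) and in the cited subconvexity papers. The only point requiring minor care is ensuring that the conductor comparison $C(\chiup_D) \asymp |D|_{S_\infty}$ holds uniformly over square-free $D$, which is guaranteed by the definition of square-free in \S \ref{sec: Waldspurger, twisted}.
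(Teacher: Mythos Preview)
Your proposal is correct and follows essentially the same route as the paper, which simply invokes \eqref{1eq: subconvexity}, \eqref{1eq: subconvexity, D}, and Theorem~\ref{thm: R-L equivalence}; you have merely filled in the details of the conductor comparison $C(\chiup_D)\asymp |D|_{S_\infty}$ and the passage from $L$ to $L^{S_\infty}$ that the paper leaves implicit. One small wording fix: when removing the Archimedean factors you need them bounded \emph{below} (not just $O_\pi(1)$) to bound $L^{S_\infty}$ from above, but this is immediate since those Gamma factors take finitely many nonzero values as $D$ varies.
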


Note that $\theta = 0 $ is the Ramanujan-Petersson conjecture while the Kim-Sarnak $\theta = \text{\Large$ \hskip -1 pt \frac {7} {64}$}$ is the best known exponent.

It is  suggested in \cite{Iwaniec-Z/2} that the bound in {\rm\eqref{8eq: ineq Ramanujan}} would hold trivially for any $ \alpha > \text{\Large$\frac 1 4$}$ while \eqref{8eq: ineq Lindelof} with $\beta > \text{\Large$\frac 1 2$}$ is the convexity bound.
The exponent in Theorem \ref{thm: Ramanujan} was obtained over the rational field $\BQ$ in \cite{BHM-Mao} and \cite{Baruch-Mao-KZ-Maass} for holomorphic modular forms and Maass forms  of half-integral weight respectively. Moreover, the Burgess-type exponent $\widetilde{\theta} = \text{\Large$\frac 3 {16}$}$ may be deduced from \cite{BH-Hybrid} and the Weyl-type $ \widetilde{\theta} = \text{\Large$\frac 1 {6}$} $ was achieved in \cite{PM-Cubic} in the holomorphic case. Our exponent also matches that in \cite{Blomer-Harcos-TR} in the totally real field case.  In comparison, the exponent of Iwaniec  and Duke is $\widetilde{\theta} = \text{\Large$\frac 3 {14}$}$.

	\begin{acknowledgement}
		We are grateful to Jim Cogdell for helpful commentaries and discussions on this work. We thank Gergely Harcos and the referee for bringing to our attention the Burgess and Weyl-type subconvexity bounds and the work of Qiu on the Whittaker period formula.
	\end{acknowledgement}
	
	\delete{
	
	When translating the Waldspurger formula
	into the classical language, one must compute explicitly the local factors $e (\varphi_{\varv}, \psi)$ and $e( \widetilde \varphi_{ \varv}, \psi^D)$   for some specific choices of the vectors $\varphi$ and $\widetilde{\varphi}$. In \cite[\S 8]{BaruchMao-Global}, this is done  for
	\begin{itemize}
		\item [-] principal series, complementary series and special representations over a non-Archimedean local field with odd residue characteristic, and
		\item  [-] holomorphic discrete series over $\BR$.
	\end{itemize}
In this section, we shall record their results for completeness and do the computations for
\begin{itemize}
	\item [-] principal  and complementary series over $\BR$ and $\BC$.
\end{itemize}
	Subsequently, we shall drop the $\varv$ from our notations.
	
	
	Let $F$ be a non-Archimedean local field, with odd residue characteristic. For simplicity, we shall assume that $\psi$ has order $0$ and that either $|D| = 1$ or $|D| = q\-$.

	
	\red{Let us consider this at the end. I don't know if it's of any interest, and it might be complicated.}
	
}
	

\def\cprime{$'$} \def\cprime{$'$}

\end{document}